\numberwithin{equation}{section}
\newtheorem{theorem}{Theorem}[section]
\newtheorem{cor}[theorem]{Corollary}
\newtheorem{defi}[theorem]{Definition}
\newtheorem{lemma}[theorem]{Lemma}
\newtheorem{rem}[theorem]{Remark}
\newtheorem{prop}[theorem]{Proposition}
\newtheorem{Mlemma}[theorem]{Main Lemma}
\theoremstyle{definition}
\newtheorem{definition}{Definition}[section]
\definecolor{red}{rgb}{1,0,0}
\begin{document}
	
	\title[Normal holonomy of complex hyperbolic submanifolds]
	{Normal holonomy of complex hyperbolic submanifolds}

	\author[S. Castañeda-Montoya]{Santiago Castañeda-Montoya}

	\author[C. Olmos]{Carlos E. Olmos}

        \address{\ \newline
        FAMAF\\ Universidad Nacional de Córdoba\\ \newline ciudad universitaria\\ (5000) Córdoba\\ Argentina.}

        \email{santiago.castaneda@mi.unc.edu.ar}
        \email{olmos@famaf.unc.edu.ar|}
	
	\subjclass{Primary 53C29; Secondary 53C40}
	\thanks{Both authors were supported by FaMAF-UNC and Ciem-CONICET}
	
\begin{abstract}
We prove that the restricted normal holonomy group of a Kähler submanifold of the complex hyperbolic space $\mathbb{C}H^{n}$ is always transitive, provided the index of relative nullity is zero. This contrasts with the case of $\mathbb{C}P^{n}$, where a Berger type result was proved by Console, Di Scala, and the second author. The proof is based on lifting the submanifold to the pseudo-Riemannian space $\mathbb{C}^{n,1}$ and developing new tools to handle the difficulties arising from possible degeneracies in holonomy tubes and associated distributions. In particular, we introduce the notion of weakly polar actions and a framework for dealing with degenerate submanifolds. These techniques could contribute to a broader understanding of submanifold geometry in spaces with indefinite signature, offering new insight into submanifolds in the dual setting of complex projective geometry.
\end{abstract}

	\maketitle

    \section{Introducion}  

For submanifolds of spaces of constant curvature, a fundamental result is the so-called normal holonomy theorem \cite{O1}. It states that the representation of the restricted normal holonomy group on the normal space is, up to a trivial factor, equivalent to an $s$-representation (i.e., the isotropy representation of a semisimple symmetric space). This result is an important tool for studying submanifold geometry, particularly for submanifolds with simple geometric invariants, such as isoparametric and homogeneous submanifolds. Moreover, there is a subtle interplay between Riemannian and normal holonomy which has led to a geometric proof of the Berger holonomy theorem \cite{O2} (for a general reference on this topic, see \cite{BCO}).
The normal holonomy theorem was extended to Kähler submanifolds of the complex space forms $\mathbb{C}P^n$ and $\mathbb{C}H^n$ by Alekseevsky and Di Scala \cite{AD}. They proved that if the normal holonomy representation is irreducible, then it is a Hermitian $s$-representation. In the reducible case, up to multiplication by complex numbers of unit modulus, it is still a Hermitian $s$-representation.
Moreover, they showed that the normal holonomy representation is always irreducible when the index of relative nullity is zero. In this context, one has a Berger type holonomy theorem \cite{CDO}: {\it a complete, full complex submanifold  of $\mathbb{C}P^{n}$ with a non-transitive normal holonomy group is the complex orbit, in the projective space, of an irreducible Hermitian $s$-representation} (see \cite{DV} for a generalization). In fact, the assumption of completeness is used only to guarantee, by a result of Abe and Magid \cite{AM}, that the index of relative nullity is zero. The main techniques consisted of taking the canonical lift of the submanifold to $\mathbb{C}^{n+1}$ and using methods from submanifold geometry.

The main purpose of this article is to address the natural question of whether the aforementioned results can be extended to complex submanifolds of complex hyperbolic space $\mathbb{C}H^n$. To this end, we lift the complex submanifold to $\mathbb{C}^{n,1}_{-}$, the open subset of $\mathbb{C}^{n,1} \simeq \mathbb{C}^{n+1}$ consisting of vectors $v$ satisfying $\langle v, v \rangle < 0$, where $\langle\,  ,\, \rangle$ denotes the Hermitian inner product of complex signature $(n,1)$.
The main challenge stems from the fact that the geometry of submanifolds in pseudo-Riemannian spaces is significantly more intricate, primarily due to the possible degeneracy — in the sense that the ambient metric may restrict degenerately — of holonomy tubes and of the equivalence classes defined by certain distributions. To tackle this issue, we first introduce the concept of weakly polar actions and develop a geometric framework for dealing with degenerate submanifolds. Although the normal connection is not well-defined for such submanifolds, the notion of a parallel normal field remains meaningful.

    \begin{theorem}\label{thm:MainTheorem1}
Let $\bar{N}^n$ be a full Kähler submanifold of the complex hyperbolic space 
$\mathbb{C}H^{n+k}$ with zero index of relative nullity. Then the restricted normal holonomy group $\Phi$ is transitive (or equivalently, $\Phi \simeq U_k$, since it acts as a Hermitian $s$-representation).
\end{theorem}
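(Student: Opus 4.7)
The approach is to argue by contradiction and adapt the Console--Di Scala--Olmos (CDO) strategy to the pseudo-Riemannian lift. Assume $\Phi$ is not transitive. Since the index of relative nullity is zero, the Alekseevsky--Di Scala theorem forces $\Phi$ to act irreducibly on $T_{p}^{\perp}\bar N \simeq \mathbb{C}^{k}$ as a Hermitian $s$-representation, and the non-transitivity assumption then makes this the isotropy of a compact Hermitian symmetric space of rank at least two; in particular $\Phi$ is compact and acts irreducibly on $\mathbb{C}^{k}$, preserving a positive-definite Hermitian form of its own.

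The first step is to lift to the flat pseudo-Hermitian model. Let $\pi : \mathbb{C}^{n+k,1}_{-} \to \mathbb{C}H^{n+k}$ be the canonical pseudo-Riemannian submersion and set $M = \pi^{-1}(\bar N)$, a pseudo-Kähler submanifold of Lorentzian signature carrying a parallel, negative-definite radial normal direction given by the position vector. The normal data of $\bar N$ lift to $M$, and the normal holonomy of $M$ transverse to the radial direction coincides with $\Phi$. The main step is then to reproduce the CDO focal-reduction argument in this indefinite setting: pick a principal $\xi \in T_{p}^{\perp}\bar N$, form the holonomy tube of $M$ along the $\Phi$-orbit of $\xi$, and analyze the induced focal structure. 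The central obstacle, and the reason new tools are needed, is that in Lorentzian signature these tubes and their slice distributions can be degenerate. This is precisely where the framework of weakly polar actions and the theory of degenerate submanifolds developed earlier in the paper intervene: they make parallel normal sections and the focal reduction meaningful despite the lack of a well-defined normal connection. The outcome of the reduction should be that, locally, $M$ sits inside an orbit of the extended action of $\Phi$ on a complex $\Phi$-invariant subspace $V \subset \mathbb{C}^{n+k,1}$.

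The contradiction then follows from Schur's lemma applied to $V$. Since $\Phi$ is compact and acts irreducibly, the restriction of the ambient Hermitian form to $V$ is a $\Phi$-invariant Hermitian form and hence a real multiple of the invariant positive-definite form of $\Phi$ (or zero). The zero case yields a null orbit, impossible for the nondegenerate submanifold $M$. A positive-definite restriction gives $\langle v,v\rangle > 0$ on $V\setminus\{0\}$, incompatible with $M \subset \mathbb{C}^{n+k,1}_{-}$. A negative-definite restriction forces $\dim_{\mathbb{C}} V \le 1$ since $\mathbb{C}^{n+k,1}$ has only one negative complex direction, in which case $\Phi$ would act on $V$ by a unitary character, contradicting the nontriviality of the irreducible non-transitive $s$-representation. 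In every case we reach a contradiction, so $\Phi$ must act transitively and, by the classification of Hermitian $s$-representations, $\Phi \simeq U_{k}$. The main difficulty in this outline is the careful execution of the focal reduction in the presence of degenerate holonomy tubes, which is exactly what the weakly polar machinery of the earlier sections is designed to handle.
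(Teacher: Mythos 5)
Your outline correctly identifies the setting (lift to $\mathbb{C}^{n+k,1}_-$, principal holonomy tubes, and the weakly polar/degenerate-submanifold machinery to handle possible degeneracies), but the heart of your argument is a black box that neither you nor the paper actually fills in the way you describe. You assert that ``the outcome of the reduction should be that, locally, $M$ sits inside an orbit of the extended action of $\Phi$ on a complex $\Phi$-invariant subspace $V$,'' and then finish with a Schur's lemma signature argument on $V$. Deriving the orbit conclusion is precisely the hard content of the Berger-type theorem of Console--Di Scala--Olmos in the projective case, and the present paper does \emph{not} establish it in the hyperbolic setting; its contradiction arrives earlier and is of a different nature. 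Concretely, the paper shows (Main Lemma \ref{lem:main343}) that the equivalence classes $H^{\tilde\xi}(x)$ are non-degenerate and coincide with the canonical foliation $\tilde F(x)=\mathrm{pr}^{-1}(N\cap H^{2n,1}_r)$; a Coxeter-group argument modeled on Terng's then yields that the only curvature spheres of $\tilde F$ are the vertical ones (Corollary \ref{cor:vones}); Lemma \ref{lem:novones} shows, via Proposition \ref{prop:sl-nsl} and Moore's splitting lemma together with Lemma \ref{lem:int3}, that not all curvature normals of $\tilde F$ are parallel. Non-transitivity of $\Phi$, by the argument of \cite[Prop.~7.36]{BCO}, would force \emph{all} curvature normals of $\tilde F$ to be parallel, and this is the contradiction. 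None of these steps appears in your proposal, and they are not routine transfers of the Euclidean arguments: the signature enters through the Lorentzian geometry of the leaves $\tilde F(x)$ and the classification of umbilical integral manifolds (spheres, hyperbolic spaces, horospheres, anti-circles), not through a terminal Schur's lemma on an invariant subspace.

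Your closing signature argument is sound as far as it goes --- a compact group acting irreducibly on a complex subspace of $\mathbb{C}^{n+k,1}$ cannot preserve the restriction of the ambient form unless that restriction is definite or zero, and each case is easily excluded --- so \emph{if} one could prove the full orbit-rigidity statement in the indefinite setting, your route would give a clean finish. But that statement is strictly stronger than what the paper proves, and there is no indication that the CDO reduction to an orbit survives the degeneracies you yourself flag as the main obstacle. As written, the proposal replaces the actual proof's content (Sections 4 and 5) with an unproven intermediate claim, so it has a genuine gap.
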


Let us note that when the index of relative nullity is non-zero, the normal holonomy group representation may be reducible. For example, if $M$ is a complex submanifold of $\mathbb{C}H^m$ and $N$ is a complex submanifold of $\mathbb{C}P^n$, then the open subset $\mathcal{O}$ of negative points of the abstract join $J(M, N)$ forms a complex submanifold of $\mathbb{C}H^{m+n+1}$ whose normal holonomy group is reducible.

We hope that the techniques developed in this paper will be useful for studying submanifolds in spaces with indefinite signature, with a focus on normal holonomy.
\vspace{.15cm}

The paper is organized as follows. Section~2 contains the preliminaries and basic facts necessary for our purposes. In this section, we develop general tools that may also be useful in a broader context. We begin with standard results on the adapted normal curvature tensor in $\mathbb{R}^{r,s}$, reviewing in \S2.1 known facts about isoparametric submanifolds in Lorentz space. In \S2.2, we define the concept of an \emph{essentially Riemannian submanifold}, and in \S2.3, we prove a normal holonomy theorem for such submanifolds. In \S2.4, we define \emph{weakly polar actions}, without requiring that the maximal dimensional orbits be non-degenerate. The main general result is Proposition~\ref{prop:Po}, which is applied in Theorem~\ref{thm:NholPo} to the study of normal holonomy. This, in turn, is used to extend normal vectors to parallel normal fields, possibly in a degenerate context. In \S2.5 it is extended the theory of holonomy tubes of Euclidean submanifolds to $\mathbb{R}^{r,s}$, even in degenerate cases. In \S2.6, we define the \emph{horosphere embedding}, which will play a crucial role in the focalization of the $0$-eigendistributions associated with parallel normal fields. This may be regarded as a focalization at infinity.

 Section~3 is concerned with the lift of complex submanifolds of $\mathbb{C}H^n$ to $\mathbb{C}^{n,1}$, relating the respective normal holonomy groups and relative nullity distributions.

 Section~4 is concerned with generalized holonomy tubes and their  relation with the so-called \emph{canonical foliation}, extending arguments in \cite{CDO}. The delicate point is the proof of Main Lemma \ref{lem:main343}.

In Section~5, we study the geometry of the equivalence classes of the distribution perpendicular to the nullity. Coxeter groups are defined, inspired by Terng's construction of such groups for isoparametric submanifolds. This section includes the proof of Theorem~\ref{thm:MainTheorem1}.

	\section{Preliminaries and basic facts}
	
	Let $\mathbb V$ be a real vector space of dimension $n$ and let 
	$\langle \, , \, \rangle$ be an inner product of signature $(r,s)$, where $n=r+s$ with $s$ being the dimension of a maximal negatively definite subspace of $\mathbb{V}$. We will often refer to $s$ as the signature of $\mathbb V$, when the inner product is clear from the context. As usual, $\mathfrak {so}(\mathbb V)$ denotes the Lie algebra of the skew-symmetric (i.e. anti self-adjoint) endomorphisms of $(\mathbb V, \langle \, , \, \rangle )$.
	The inner product induces an inner product, also denoted by $\langle \, , \,\rangle$, on  tensors of a fixed type. In particular,  $\langle x\otimes  y , w\otimes z\rangle = 
	\langle x,w\rangle\langle y , z\rangle$.
	
	We focus on the inner product induced on  $\Lambda ^2 (\mathbb V)$. On has that  
	\begin{equation}\langle x\wedge y , w\wedge z\rangle = 
		2(\langle x,w\rangle \langle y,z\rangle -
		\langle x,z\rangle) \langle y,w\rangle), 
	\end{equation}\label{eq:1-1}
	where $u\wedge v = u\otimes v -v\otimes u$.

	If $e_1, \cdots , e_n$ is an orthonormal basis of $\mathbb V$, then $\frac{1}{\sqrt{2}}\, e_i\wedge e_j = \frac{1}{\sqrt{2}}(e_i\otimes e_j -e_j\otimes e_i)$, is an orthonormal basis of $\Lambda ^2(\mathbb V)$, $i,j= 1, \dots , n$, $i<j$. 
	
	The vector space $\Lambda ^2 (\mathbb V)$ is naturally identified with 
	$\mathfrak {so}(\mathbb V)$ by means of 
	\begin{equation}\label{eq:1-2}
		\ell : \Lambda ^2 (\mathbb V) \to \mathfrak {so}(\mathbb V), 
	\end{equation}
	where $\ell$ is determined by  
	\begin{equation}\label{eq:ell11} \langle \ell (x\wedge y)w, z\rangle = \langle x , w\rangle
		\langle y , z\rangle - \langle x , z\rangle
		\langle y , w\rangle.\end{equation} Observe that 
	\begin{equation} \label{eq:ell-12}
		\langle \ell (x\wedge y)w, z\rangle = \frac 12 \langle x\wedge y , w\wedge z\rangle 
	\end{equation}
	
	Endow $\mathfrak {so}(\mathbb V)$ with the usual inner product
	$$\langle B, C\rangle = -\mathrm{trace}(B\circ C).$$
	A straightforward calculation shows that 
	$$\langle \ell ( x\wedge y),\ell (w\wedge z)\rangle =
	\langle x\wedge y , w\wedge z\rangle $$ 
	which implies that $\ell$ is a linear isometry.

	One has, from (\ref{eq:ell-12}),   that 
	\begin{equation}\label{eq:ell-0}
		\langle\ell^{-1}(B), w\wedge z\rangle
		=  2\,  \langle Bw ,z\rangle  
	\end{equation}
	and hence 
	\begin{equation}\label{eq:ell-1}
		\langle\ell^{-1}(B), \frac{1}{\sqrt{2}}\, e_i\wedge e_j\rangle
		=  {\sqrt{2}}\,  \langle Be_i ,e_j\rangle,  
	\end{equation}
	and hence 
	\begin{equation}\label{eq:ell-123}
		\ell^{-1}(B)=  \sum _{i<j}\epsilon _i \epsilon _j \langle Be_i, e_j\rangle \,  e_i\wedge e_j,
	\end{equation}
	where $\epsilon _k = \langle e_k,e_k\rangle = \pm  1$.

    \vspace{.2cm}
    
    In order to fix notation, since the word {\it degenerate} is ambiguous, we explicit the following definition: 
	\begin{definition}
     A (regular) submanifold of a pseudo-Riemannian manifold is called {\it degenerate} if the induced metric is degenerate.
	\end{definition}
	
Let \( M^{k,l} \subset \mathbb{R}^{r,s} \) be a non-degenerate local submanifold of the flat space form of signature \( s \) and dimension \( n = r + s \).  Here, {\it local} means that we work in a neighborhood of a point, without any global assumptions. 
	Let us consider the normal curvature tensor $R^\perp$ at some arbitrary $q\in M$. Recall the Ricci identity $\langle R^\perp_{x,y}\xi,\eta\rangle = \langle [A_{\xi}, A_{\eta}]x,y\rangle$, where $A$ is the shape operator of $M$. 
	
	Just for the sake of saving notation, we use the same letter $\ell$ for the isometry 
	$\ell : \Lambda ^2 (\mathbb V) \to \mathfrak {so}(\mathbb V)$, where $\mathbb V$ is either $T_qM$ or $\nu _qM$. 
	Let $x,y\in T_qM$ and $\xi , \eta \in \nu _qM$ be arbitrary. Since $\ell ^{-1}(R_{x,y})$ is skew-symmetric in $x,y$ it extends to a linear map $\tilde R ^\perp: \Lambda ^2(T_qM) \to \Lambda ^2(\nu _qM)$, by defining 
	\begin{equation}\label{eq:tilde}
		\tilde R ^\perp (x\wedge y) = \ell^{-1}(
		R^\perp _{x,y}).
	\end{equation}
	We will refer to  $\tilde R ^\perp$ as the normal curvature operator. 
	
	\begin{align}
		\begin{split}
			\frac 12 \langle \tilde R^\perp (x\wedge y) , \xi \wedge \eta\rangle &=  \langle R^\perp_{x,y}\xi , \eta \rangle = 
			\langle [A_{\xi}, A_{\eta}]x,y\rangle \\ &= \frac 12
			\langle\ell ^{-1}([A_{\xi}, A_{\eta}]), x\wedge y\rangle = 
			\frac 12 \langle \tilde A (\xi \wedge \eta), x\wedge y\rangle
		\end{split}\label{eq:132}
	\end{align}
	where $\tilde A : \Lambda ^2(\nu _qM)\to \Lambda ^2(T_qM)$ is the linear map defined by $\tilde A (\xi\wedge \eta) = \ell ^{-1}([A_{\xi}, A_{\eta}])$. Then
	\begin{equation}\label {eq:2121}
		\langle \tilde R^\perp (x\wedge y) , \xi \wedge \eta\rangle 
		= \langle  x\wedge y ,\tilde A (\xi \wedge \eta)\rangle
	\end{equation}
	This implies that $\tilde A$ is the transpose morphism  of the normal curvature operator  $\tilde R^\perp$ (or,equivalently, $\tilde R^\perp$ is the transpose of $\tilde A$).
	
	\begin{rem}\label{rem:11} \rm
		Let us define the so-called {\it adapted} normal curvature tensor: 
		\begin{align*}
			\begin{split} 
				\langle \mathcal R_{\xi_1,\xi _2}\xi _3,\xi _4\rangle :&=\langle \tilde R \circ \tilde A \, (\xi _1 \wedge \xi _2), \xi _3\wedge 
				\xi _4\rangle 
				= \langle  \tilde A \, (\xi _1 \wedge \xi _2), \tilde A \, (\xi _3\wedge 
				\xi _4)\rangle \\ 
				&= \langle \ell ^{-1}([A_{\xi _1}, A_{\xi _2}]), 
				\ell ^{-1}([A_{\xi _3}, A_{\xi _4}])\rangle  \\
				&= \langle [A_{\xi _1}, A_{\xi _2}],
				[A_{\xi _3}, A_{\xi _4}]\rangle  
				= -\mathrm{trace}\, ([A_{\xi _1}, A_{\xi _2}]\circ [A_{\xi _3}, A_{\xi _4}])
			\end{split}
		\end{align*}
		Then, by the same arguments  in \cite{O1}, $\mathcal R$ satisfy the identities of a pseudo-Riemannian curvature tensor on the normal space $\nu _q(M)$.
	\end{rem}
	
	\begin{lemma} \label{lem:11} Let $\mathbb V$ be a vector space  with a positive definite inner product and let  $\mathbb W$ be a vector space  with an inner product. We denote both inner products by $\langle \, ,\rangle$. 
		Let $L: \mathbb V \to \mathbb W$ be a linear map and let 
		$L^t: \mathbb W \to \mathbb V$ be its transpose. Then the image of $L$ coincides with the image of $L\circ L^t$.
	\end{lemma}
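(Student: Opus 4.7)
The plan is to establish the nontrivial inclusion $\mathrm{Im}(L)\subseteq\mathrm{Im}(L\circ L^t)$ by identifying $(\ker L)^\perp$ with $\mathrm{Im}(L^t)$, taking full advantage of the positive definiteness of $\mathbb V$. The opposite inclusion $\mathrm{Im}(L\circ L^t)\subseteq\mathrm{Im}(L)$ is immediate from the definition of composition, so this is the only content.

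Because $\mathbb V$ is positive definite we have the honest orthogonal decomposition
\[
\mathbb V \;=\; \ker L \,\oplus\, (\ker L)^\perp,
\]
so that $\mathrm{Im}(L)=L\bigl((\ker L)^\perp\bigr)$. It therefore suffices to check the identity $(\ker L)^\perp=\mathrm{Im}(L^t)$. From the defining relation $\langle L^t w,v\rangle_{\mathbb V}=\langle w,Lv\rangle_{\mathbb W}$ one reads off that $v\in\ker L$ if and only if $v$ is $\mathbb V$-orthogonal to $L^t w$ for every $w$, i.e.\ $\ker L=\bigl(\mathrm{Im}(L^t)\bigr)^\perp$. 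Since the inner product on $\mathbb V$ is positive definite, taking orthogonal complements is involutive, so $(\ker L)^\perp=\mathrm{Im}(L^t)$, and applying $L$ yields $\mathrm{Im}(L)=L(\mathrm{Im}(L^t))=\mathrm{Im}(L\circ L^t)$.

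The only point requiring attention is that the argument is genuinely asymmetric in $\mathbb V$ and $\mathbb W$: positive definiteness of $\mathbb V$ is used twice, first to split off $\ker L$ as a true direct summand and then to invoke $(X^\perp)^\perp=X$. If $\mathbb V$ were only non-degenerate, isotropic vectors could lie in $\ker L\cap(\ker L)^\perp$ and both ingredients would break down; indeed, the analogous statement with the roles of $\mathbb V$ and $\mathbb W$ exchanged is false in general, which is precisely why the lemma is stated this way for later use with an indefinite target $\mathbb W$.
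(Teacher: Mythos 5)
Your proof is correct and follows essentially the same route as the paper's: both arguments rest on the orthogonal decomposition $\mathbb V=\mathrm{Im}(L^t)\oplus\mathrm{Im}(L^t)^\perp$ afforded by positive definiteness of $\mathbb V$, together with the observation (via the adjoint relation and non-degeneracy of the form on $\mathbb W$) that $L$ annihilates $\mathrm{Im}(L^t)^\perp$. Your closing remark on where positive definiteness enters, and why the statement is asymmetric in $\mathbb V$ and $\mathbb W$, is accurate and consistent with how the lemma is used later.
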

	\begin{proof} The inclusion $L\circ L^t(\mathbb W) \subset L(\mathbb V)$ is clear. If $\mathbb V' = L^t(\mathbb W)^\perp$, then 	
		$$\{0\}= \langle  \mathbb V', L^t(\mathbb W)\rangle = 
		\langle  L(\mathbb V'), \mathbb W \rangle .$$
		Thus,  $L(\mathbb V')= \{0\}$. Since $\mathbb V= 
		\mathbb V'\oplus L^t(\mathbb W)$ the lemma follows.
	\end{proof}
	
	\begin{rem}\label{rem:32} \rm
		In the notation and assumptions of the Lemma \ref{lem:11}, let $C:=L\circ L^t$. Then 
		$\langle C(w), w\rangle = \langle L^t(w), L^t(w)\rangle \geq 0$ with equality  if and only if $L^t(w)=0$. 
	\end{rem}
	\subsection{Riemannian isoparametric submanifolds of the Lorentz space}\label{sec:isoLorentz}
	
	The object of this section is to point out some local results that  in the bibliography are only proved for  complete submanifolds 
 (see \cite{Wu},\cite{Wi}, \cite[Section 4.2.6]{BCO}). 
	
	Let $M^n$ be a local isoparametric Riemannian submanifold of Lorentz space $\mathbb R^{m,1}$.  Namely, $M$ is a local Riemannian submanifold with (globally) flat normal bundle and, the shape operator $A_\xi$ has constant eigenvalues for any parallel normal section $\xi$. As in the Euclidean ambient case we have an orthogonal  decomposition 
	$TM= E_0\oplus \cdots \oplus E_g$, perhaps where $E_0=0$, 
	and  different parallel normal fields, known as curvature normals,  
	$0=\eta _0, \cdots , \eta _g$
	such that any of the so-called eigendistributions $E_i$ is invariant under all the shape operators of $M$ and 
       $$A_{\xi\vert E_i}= \langle \eta _i , \xi\rangle
	Id_{\vert E_i}.$$	One has, due to Codazzi identity, that any eigendistribution is autoparallel. Moreover, the integral manifold $S_i(x)$ of $E_i$ by $x$ is an umbilical submanifold of the ambient space, which is contained in the affine subspace $$ L_i(x) = x + E_i(x) \oplus \mathbb R\eta _i(x)
	\subset \mathbb R^{m,1}.$$ 
	It turns out that  $L_i(x) = L_i(y)$
	if $S_i(x)=S_i(y)$, $i=0, \cdots , g$. Let $k_i=\dim E_i$.
	One has, for $i=0$, that $S_0(x)$ is an open part of $L_0(x) = x + E_0(x)$. 
	If $i>1$, then $S_i(x)$ is an umbilical hypersurface of $L_i(x)$ that belongs to   one of the following types: 
	
	(1)   If $\eta _i$ is spacelike, then $S_i(x)$ is an open subset of the round $k_i$-sphere of $L_i(x)$ of center $c$ and radius $\rho$ given by 
	\begin{equation}\label{eq:99t} c= x + \frac {1}{\langle \tilde \eta _i(x), \tilde \eta _i(x)\rangle}\, \tilde \eta _i\, ,
		\ \ \ \ \ \ \
		\rho ^ 2= \frac {1}{{\langle \tilde \eta _i(x), \tilde \eta _i(x)\rangle}}.
	\end{equation} 
    In this case the geodesics of $S_i(x)$ are circles 
	
	(2) If $\eta _i$ is timelike then $S_i(x)$ is an open subset of the hyperbolic space  of $L_i(x)$ defined by 
	$$H_r^{k_i} = \{X\in x +  E_i(x) \oplus \mathbb R \,  \eta _i(x):  
	\langle X-c, X-c)\rangle = -r^2 \}^o$$, where
	 	$-r^2= \langle x-c,x-c\rangle $,
 	 $c$ has the same expression as in (1) and  $(\ )^o$ denotes the connected component by $x$. 
     In this case the geodesics of $S_i(x)$,  are of the form 
     \begin{equation}\label{eq:438}
         -\cosh (t)\eta _i(x) + \sinh (t) w, 
     \end{equation}  where 
     $w\in T_xS_i(x)$ and $\langle w , w \rangle = r^2 =-\langle \eta _i(x) , \eta _i(x) \rangle $
	
	(3) If $\eta _i$ is lightlike, then $S_i(x)$ is a horosphere of an appropriate real hyperbolic space. In fact, there always exist a timelike $z\in \nu_xM$ such that 
	$\langle  \eta _i(y) , z\rangle =1$. Extend $z$ to a parallel normal field $\tilde z$ of $S_i(x)$. Then the shape operator $A_{\tilde z}$ is  the identity, since $\langle  \eta _i , \tilde{z}\rangle =1$. Then the image of the parallel map $y \mapsto y + \tilde z _y$, from $S_i(x)$ into $\mathbb R^{m,1}$, is  a constant $c =x+z$, since its differential is zero. Then $\langle y-c ,y-c\rangle = \langle \tilde z, \tilde z \rangle: = -r^2$, for all $y\in S_i(x)$.
Let 
	$$H_r^{k_i+1} = \{X\in x +  E_i(x) \oplus \mathbb R \,  \eta _i(x) \oplus \mathbb R\, z:  
 \langle X-c, X-c\rangle = -r^2\}^o.$$
	 Then $S_i(x)$ is an open subset of the horosphere defined by 
	$$\bigg(x +  E_i(x) \oplus \mathbb R \, \eta _i(x)\bigg)\cap  H_r^{k_i+1}\, .$$

    Any component  of a geodesic $\gamma (t)$ of $S_i(x)$ is  quadratic, i.e.  of the form $a_1t^2 + a_2t + a_t$ (see Section \ref{sec:horosphere}, and \cite{Wi} for an explicit expression).

 \ 
	
		\begin{prop}\label{prop:sl-nsl}
	Let $M$ be a Riemannian isoparametric submanifold of the Lorentz space $\mathbb R^{m,1}$. Then any non-space like curvature normal $\eta _i$ is perpendicular to any other curvature normal. 
	\end{prop}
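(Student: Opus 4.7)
My plan is to analyse the parallel normal field $\eta_j$ along the leaf $S_i(x)$, exploit the umbilical structure of $S_i(x)$ together with the isoparametric structure of $M$, and then use the specific non-compact geometry produced by a non-spacelike curvature normal.

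First I would integrate the parallel-transport identity: since $\eta_j$ is parallel in the normal bundle of $M$, for $X \in E_i$ one has $\overline{\nabla}_X \eta_j = -A_{\eta_j} X = -c_{ij}\, X$, where $c_{ij} := \langle \eta_i, \eta_j\rangle$. Integration along a curve $\gamma \subset S_i(x)$ yields the affine relation
\[
\eta_j(y) \;=\; \eta_j(x) - c_{ij}(y - x), \qquad y \in S_i(x),
\]
viewed in the ambient $\mathbb{R}^{m,1}$. In the timelike case, I would set $c_i = x + \eta_i(x)/\langle\eta_i,\eta_i\rangle$ (the hyperbolic center) and decompose $\eta_j(x) = (c_{ij}/c_{ii})\eta_i(x) + \tilde\eta_j$ with $\tilde\eta_j \in \nu M' := \nu_x M \cap \eta_i(x)^\perp$, so that the formula rearranges to $\eta_j(y) = \tilde\eta_j - c_{ij}(y - c_i)$. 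In the lightlike case I would replace $\eta_i/c_{ii}$ by the auxiliary timelike parallel normal field $\tilde z$ from case~(3), using $\langle \eta_i, \tilde z\rangle = 1$.

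Next I would apply Codazzi to $X \in E_i$ and $Y \in E_k$ ($k\ne i$) to show that each $E_k$ ($k\ne i$) is preserved by Levi--Civita parallel transport along $E_i$-directions; combined with the vanishing $\alpha^M(X,Y) = 0$ when $X\in E_i$, $Y\in E_k$, this gives $E_k(y) = E_k(x)$ as subspace of $\mathbb{R}^{m,1}$ for every $y\in S_i(x)$. Hence the normal space has the clean decomposition $\nu_y M = \mathbb{R}(y-c_i) + \nu M'$, and the auxiliary parallel normal field $\tilde\eta_j := \eta_j - (c_{ij}/c_{ii})\eta_i$ of $M$ (respectively $\tilde\eta_j := \eta_j - c_{ij}\tilde z$ in the lightlike case) satisfies $A_{\tilde\eta_j}|_{E_i} = 0$, so that $E_i$ is contained in the relative nullity of $\tilde\eta_j$.

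The hard part is extracting $c_{ij} = 0$, since a direct check shows that the constant Minkowski length $|\eta_j|^2 = c_{jj}$ and the orthogonality $\eta_j(y) \perp T_y M$ along $S_i(x)$ are automatically compatible with the affine formula. I plan to force the conclusion by exploiting the non-compact umbilical structure of $S_i(x)$: when $\eta_i$ is timelike the geodesics of $S_i(x)$ have the explicit $\cosh/\sinh$ form (\ref{eq:438}) and are unbounded, while in the lightlike case they are quadratic via the horosphere parametrisation. In both situations one can push the evolution of $\eta_j(\gamma(t))$ along $\gamma$ far enough to generate, through the reflection $s_{\eta_i}$ (or its horospheric analogue implemented through $\tilde z$), an infinite family of distinct parallel-manifold curvature normals of $M$, contradicting the finiteness of the set $\{\eta_0,\dots,\eta_g\}$. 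For a fully local version of the argument, avoiding the completeness assumption of \cite{Wu,Wi}, I would Taylor-expand both sides of the affine formula along $\gamma$ to high order and match coefficients against $A_{\tilde\eta_j}|_{E_i} = 0$ and $A_{\eta_i}|_{E_i} = c_{ii}\, \mathrm{Id}$; compatibility at sufficiently high order then forces $c_{ij}=0$.
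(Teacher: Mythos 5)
Your setup is sound as far as it goes: the affine evolution $\eta_j(y)=\eta_j(x)-c_{ij}(y-x)$ along $S_i(x)$ is correct, and your ``direct check'' honestly identifies the obstruction --- constancy of the Lorentzian norm $\langle\eta_j(y),\eta_j(y)\rangle$ is automatically compatible with that affine formula, so it yields nothing. But the proposal then stops exactly where the proof has to begin. Both of your suggested ways of ``extracting $c_{ij}=0$'' are unsubstantiated. The reflection idea does not apply here: reflections in focal hyperplanes come from the antipodal map on \emph{compact} curvature spheres, and the whole point of the non-spacelike case is that $S_i(x)$ is a hyperbolic space or a horosphere, which is non-compact and has no antipodal map, so there is no reflection $s_{\eta_i}$ to iterate and no ``infinite family of curvature normals'' to contradict finiteness. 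The Taylor-expansion idea is just a promise that some unspecified higher-order coefficient comparison will work; since you have already verified that the norm identity is satisfied identically, there is no reason to believe higher-order matching of that same identity produces anything new. In addition, the intermediate claim that Codazzi plus $\alpha(E_i,E_k)=0$ gives $E_k(y)=E_k(x)$ along $S_i(x)$ is not justified: Codazzi only relates $\langle\nabla_XY,Z\rangle$ to $\langle\nabla_YX,Z\rangle$ for $Z$ in a third eigendistribution and does not force these to vanish; that $E_i$ (hence each $E_k$) is a genuinely \emph{parallel} distribution is essentially the hard content of the proposition, not an input to it.

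The missing idea, which is how the paper closes the gap, is to apply the bounded-affine-map principle to a quantity with a \emph{positive definite} norm rather than to the Lorentzian length of $\eta_j$. Passing to the focal manifold $M_i=(M)_{\xi_i}$ with projection $\pi(q)=q+\xi_i(q)$, the shape operator $A^i_{q-\pi(p)}$ of $M_i$ depends affinely on $q$, and by the tube formula its eigenvalues --- hence its operator norm on the Riemannian space $T_{\pi(p)}M_i$ --- are constant along $S_i(p)$. Extending a geodesic of $S_i(p)$ to the complete umbilical leaf $\tilde S_i(p)$ (explicitly $\cosh/\sinh$ in the timelike case, quadratic in the lightlike case) and using analyticity, one gets an affine map whose image of an unbounded curve is bounded in a positive definite norm; this forces the affine map to be constant along the geodesic, hence $A^i_{q-\pi(p)}$ is independent of $q\in S_i(p)$. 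From there $E_i$ is a parallel distribution, $M$ splits locally, and the Gauss equation gives $\langle\eta_i,\eta_j\rangle=0$. Positive definiteness is exactly what your Lorentzian computation lacks: a vector can run off to infinity along a null direction while keeping constant Lorentzian length, but a symmetric operator on a Riemannian space cannot stay bounded in norm along an unbounded affine ray unless the ray is constant.
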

\begin{proof} We may assume that $\eta _i \neq 0$.  Let $S_i(p)$ be an integral manifold of $E_i$,  let $M_i : = (M)_{\xi _i}$ be a parallel focal manifold such that $\ker (I-A_{\xi _i}) = E_i$, and let $\pi$ be the projection from $M\to M_i$,  i.e. $\pi (q) = q +\xi _i(q)$. 
Since 
$\langle \eta _i,\eta _i\rangle\leq 0$, then  $S_i(p)$ is a open subset of an unbounded complete Riemannian  umbilical submanifold $\tilde S _i(p)$ of the Lorentzian affine normal space $\pi (p) +\nu _{\pi (p)}M_i \subset \mathbb R^{m,1}$ (see the discussions preceding this proposition). By the tube formula, the eigenvalues of the shape operator $A^i_{q-\pi (p)}$ of $M_i$ do not depend on $q\in S_i(p)$ and hence, 
$d: =\Vert A^i_{q-\pi (p)}\Vert$ does not depend on $q\in S_i(p)$. Let $\gamma (t)$, $\vert t\vert <\varepsilon$ be a geodesic of $S_i(p)$ and let $\tilde \gamma (t)$ be its extension to a complete geodesic of 
$\tilde S _i(p)$, $t\in \mathbb R$. We have that $\Vert A^i_{\gamma (t)-\pi (p)}\Vert =d$, for $\vert t\vert <\varepsilon$. By the explicit form  of the geodesics, and by making use standard arguments relying  on the (real)  analyticity of $\tilde \gamma (t)$, we obtain that 
$\Vert A^i_{\tilde\gamma (t)-\pi (p)}\Vert =d$, for all $t\in \mathbb R$. Then the image of $\tilde \gamma (t)$ under the affine map $u\mapsto A^i_{u-\pi (p)}$, from  $\pi (p) +\nu _{\pi (p)}M_i$ into the symmetric endomorphisms of $T_{\pi(p)}M_i$, is bounded.  This is a contradiction, from the explicit expression of $\tilde \gamma (t)$, unless $A^i_{\tilde\gamma (t)-\pi (p)}$ is constant and thus, $A^i_{q-\pi (p)}$ does not depend on $q\in S_i(p)$ (cf. \cite[Lemma 4]{Wi} or \cite[Lemma 4.2.19]{BCO}).
Then, by the proof of Lemma 4.2.20 of \cite{BCO}, we obtain that $E_i$ is a parallel distribution of $M$. Let 
$v_i$ and $v_j$ of unit length and tangent to $E_i(p)$ and $E_j(p)$, respectively ($i\neq j$). By making use of the Gauss equation, taking into account that $E_i$ is a parallel distribution and that $\alpha (E_i,E_j)=0$, we obtain that
$\langle R(v_1,v_2)v_1,v_2 \rangle=\langle \alpha(v_1,v_2),\alpha(v_1,v_2)\rangle - \langle \alpha(v_1,v_1),\alpha(v_2,v_2)\rangle =-\langle\eta _i(p), \eta _j(p)\rangle= 0$.
	\end{proof}

	\subsection{Essentially Riemannian submanifolds}\label{sec:essR}
	
	\begin{defi}\label{def:1}\rm  A non-degenerate (immersed) submanifold $M^{k,l}$ of $\mathbb R ^{r,s}$ is called {\it essentially Riemannian}
		if there exists a  distribution $\mathcal D$ on $M$, where $\langle\, ,\, \rangle$ is positive definite, such that $\mathcal D$ is invariant under all shape operators $A_\xi$, and the family of the shape operators, restricted to $\mathcal D^\perp$, is a commuting family. \rm 
	\end{defi}
	
	Let  $M^{k,l}$  be an essentially Riemannian submanifold of 
	$\mathbb R ^{r,s}$ with associated Riemannian distribution $\mathcal D$. Since we will work locally, we assume that 
	$M\subset \mathbb R ^{r,s}$ is an embedded submanifold. Let, for $q\in M$, 
	\begin{equation} \label{eq:33}
		C_q = \{R^\perp_{x_q,y_q}: x_q,y_q\in T_qM\}
	\end{equation}
	
	By the Ricci identity, and the fact that the family of shape operators restricted to $\mathcal D^\perp$ is a commuting family one has that 
	\begin{equation} \label{eq:3}
		C_q = \{R^\perp_{x_q,y_q}: x_q,y_q\in \mathcal D_q\}
	\end{equation}
	
	Since the bracket of any two shape operators $[A_\xi , A_\eta]$ is zero when restricted to $\mathcal D^\perp $, and  the restriction of $\langle \, ,\, \rangle $ to $\mathcal D$ is positive definite, one obtains, from Remark \ref{rem:11}, Lemma \ref{lem:11} and Remark \ref{rem:32}, the following: 
	\begin{lemma}\label{lem: 43} Let $M$ be an essentially Riemannian submanifold of $\mathbb R^{r,s}$ and let  $\mathcal R$ be its adapted normal curvature tensor. Then 
		\begin{enumerate}
			\item 	$\mathcal R$
			has non-positive sectional curvatures, i.e.   
			$\langle \mathcal R _{\xi_1,\xi_2}\xi _2 , \xi_1\rangle \leq 0$, for all $\xi _1,\xi _2\in \nu _qM, q\in M$. 
			\item  $\langle \mathcal R _{\xi_1,\xi_2}\xi _2 , \xi_1\rangle = 0$ if and only if $\mathcal R _{\xi_1,\xi_2}= 0$.
			\item $\langle \mathcal R _{\xi_1,\xi_2}\xi _2 , \xi_1\rangle = 0$ if and only if $[A_{\xi_1}, A_{\xi_2}]=0$.
			\item The linear span of $\{R^\perp _{x,y}: x,y\in T_qM\}$
			coincides with the linear span of $\{\mathcal R _{\xi,\eta}: \xi ,\eta \in \nu_qM\}$
			\item Let $\bar{\mathcal R}= \tilde R^\perp \circ \tilde A$ be the curvature operator on $\nu _qM$ associated to $\mathcal R$. Then $\langle \bar{\mathcal R}(u), u\rangle\geq 0$ for all $u\in \Lambda ^2(\nu_qM)$. Moreover, the equality holds if and only if 
			$\bar{\mathcal R}(u)=0$.
		\end{enumerate}
	\end{lemma}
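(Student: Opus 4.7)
The plan is to exploit the block structure of the brackets $[A_{\xi_1}, A_{\xi_2}]$ in order to reduce the whole statement to computations on the positive-definite space $\Lambda^2(\mathcal D_q)$, after which items (1)--(5) become formal consequences of Lemma~\ref{lem:11} and Remark~\ref{rem:32}. The first step is the following algebraic observation: since $\mathcal D$ is invariant under every $A_\xi$ and each $A_\xi$ is self-adjoint on the non-degenerate tangent space, the complement $\mathcal D^\perp$ is also invariant; since the family of shape operators commutes on $\mathcal D^\perp$, the bracket $[A_{\xi_1}, A_{\xi_2}]$ vanishes on $\mathcal D^\perp_q$ while preserving the positive-definite subspace $\mathcal D_q$. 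Formula \eqref{eq:ell-123} then shows that $\tilde A(\xi_1 \wedge \xi_2) = \ell^{-1}([A_{\xi_1}, A_{\xi_2}])$ actually lies in $\Lambda^2(\mathcal D_q)$, so $\tilde A$ takes values in a positive-definite space.

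Next I would set $L := \tilde R^\perp\vert_{\Lambda^2(\mathcal D_q)} : \Lambda^2(\mathcal D_q) \to \Lambda^2(\nu_q M)$. Identity \eqref{eq:3} gives $\mathrm{Image}(L) = \mathrm{Image}(\tilde R^\perp)$, and the duality \eqref{eq:2121} identifies $\tilde A$, viewed as a map into $\Lambda^2(\mathcal D_q)$, with the transpose $L^t$; hence $L \circ L^t = \tilde R^\perp \circ \tilde A = \bar{\mathcal R}$. Since the domain of $L$ is positive definite, Remark~\ref{rem:32} applies and gives
\[
\langle \bar{\mathcal R}(u), u\rangle = \langle L^t u,\, L^t u\rangle \geq 0,
\]
with equality iff $L^t u = \tilde A(u) = 0$, in which case $\bar{\mathcal R}(u) = L(L^t u) = 0$; this proves (5). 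Lemma~\ref{lem:11} then yields $\mathrm{Image}(\tilde R^\perp) = \mathrm{Image}(\bar{\mathcal R})$, and transferring through the isometry $\ell$ gives (4).

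Finally, for (1)--(3) I would set $u = \xi_1 \wedge \xi_2$ and observe
\[
\langle \mathcal R_{\xi_1,\xi_2}\xi_2, \xi_1\rangle = \langle \bar{\mathcal R}(u), \xi_2 \wedge \xi_1\rangle = -\langle \bar{\mathcal R}(u), u\rangle \leq 0,
\]
which is (1); equality corresponds, by (5), to $\bar{\mathcal R}(u) = 0$, and by Remark~\ref{rem:32} to $\tilde A(u) = 0$, i.e.\ to $[A_{\xi_1}, A_{\xi_2}] = 0$, which settles (2) and (3) simultaneously. I expect the only (modest) obstacle to be the first step: confirming the block-diagonal form of the brackets and identifying $\tilde A$ with the transpose of the \emph{restricted} map $L$. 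Once this algebraic setup is in place, the remainder is a mechanical application of the preparatory results.
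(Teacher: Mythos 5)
Your proposal is correct and follows essentially the same route as the paper, which proves the lemma precisely by combining Remark~\ref{rem:11}, Lemma~\ref{lem:11} and Remark~\ref{rem:32} after observing that $[A_{\xi_1},A_{\xi_2}]$ vanishes on $\mathcal D^\perp$ and that $\mathcal D$ is positive definite. Your write-up merely makes explicit the steps the paper leaves implicit (the block structure of the brackets, the identification $\tilde A=L^t$ with $L=\tilde R^\perp\vert_{\Lambda^2(\mathcal D_q)}$, and $\bar{\mathcal R}=L\circ L^t$), and these are all verified correctly.
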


	\subsection{Normal holonomy of essentially Riemannian submanifolds}
	
	\
	
	Let $M$ be an essentially Riemannian submanifold of $\mathbb R^{r,s}$ with adapted normal curvature tensor $\mathcal R$. 
	Let $\tau _c^\perp$ denote the $\nabla ^\perp$-parallel transport along a (piecewise differentiable) curve $c$ from $p$ to $q$, and let $\tau _c (\mathcal R)$ be  the algebraic curvature tensor of $\nu _qM$ defined by 
	$$\tau _c (\mathcal R) _{\xi_1 ,\xi _2}\xi _3 := \tau _c \mathcal R _{\tau _c^{-1}\xi_1 ,\tau _c^{-1}\xi _2}\tau _c^{-1}\xi_3.$$
	Let $\mathfrak {hol}(q)$ denote the normal holonomy algebra at $q$, i.e., the Lie algebra of the normal holonomy group $\Phi (q)$ of $M$ at $q$.
	Then, by the Ambrose-Singer theorem and Lemma \ref{lem: 43}(4), one has that 
	\begin{equation}\label{eq:hol}
		\mathfrak {hol}(q)= \text{linear span of }\{R_{\xi ,\eta }: R\in F(q),\,  \xi , \eta \in \nu _qM\}
	\end{equation}
	where 
	\begin{equation}\label{eq:Fq}
		F(q):=\{\tau _c(\mathcal R_x): c \text{ is an arbitrary curve from } x\text{ to } q, x\in M\}.
	\end{equation}
	Observe that any  $R\in F (q)$ is an algebraic curvature tensor of 
	$\nu _q (M)$. Moreover, it is positive semi-definite when regarded as a symmetric endomorphism of $\Lambda ^2(\nu _qM)$. That is, if $u \in \Lambda ^2(\nu _qM)$, then $\langle R(u),u\rangle \geq 0$ with equality if and only if $R(u)=0$, 
	
	\begin{lemma}\label{lem:unR} We are under the previous notation and assumptions. There exists $R\in  F(q)$ such that  $\displaystyle{\mathfrak {hol}(q)= \{R_{\xi ,\eta }:  \xi , \eta \in \nu _qM\}}$.
	\end{lemma}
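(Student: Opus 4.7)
The plan is to reformulate the claim in terms of the curvature operator $\bar{R}$ on $\Lambda^{2}(\nu_{q}M)$ and then combine finitely many elements of $F(q)$ into one with maximal image via the positive semi-definiteness from Lemma~\ref{lem: 43}(5).

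First, applying (\ref{eq:ell-0}) to $B = R_{\xi,\eta}$ and using the defining relation of $\bar R$, one finds $\ell^{-1}(R_{\xi,\eta}) = 2\,\bar R(\xi \wedge \eta)$, so the span of $\{R_{\xi,\eta} : \xi,\eta \in \nu_qM\}$ corresponds under $\ell^{-1}$ to $\bar R(\Lambda^{2}\nu_{q}M) = \mathrm{image}(\bar R)$. The goal is therefore to exhibit $R \in F(q)$ with $\mathrm{image}(\bar R) = \ell^{-1}(\mathfrak{hol}(q))$. By the Ambrose--Singer-type formula (\ref{eq:hol}) together with finite-dimensionality of $\mathfrak{hol}(q)$, I select $R_{1},\ldots,R_{N} \in F(q)$ with $\sum_{i}\mathrm{image}(\bar R_{i}) = \ell^{-1}(\mathfrak{hol}(q))$.

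Second, I invoke the positive semi-definiteness of each $\bar R_{i}$: Lemma~\ref{lem: 43}(5) gives this for $\bar{\mathcal{R}}_{x_{i}}$, and the property passes to $R_{i} = \tau_{c_{i}}\mathcal{R}_{x_{i}}$ via the isometry $\Lambda^{2}\tau_{c_{i}}$. From $\langle \bar R_{i}(u),u\rangle \geq 0$ with equality iff $\bar R_{i}(u) = 0$, the vanishing of $(\bar R_{1} + \cdots + \bar R_{N})(u)$ pairs with $u$ to give $\sum_{i}\langle \bar R_{i}(u),u\rangle = 0$, forcing each $\bar R_{i}(u) = 0$. Hence $\ker(\sum_{i}\bar R_{i}) = \bigcap_{i}\ker \bar R_{i}$. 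By self-adjointness of each $\bar R_{i}$ and non-degeneracy of the inner product on $\Lambda^{2}\nu_{q}M$, $\mathrm{image}(\bar R_{i}) = (\ker \bar R_{i})^{\perp}$, and therefore $\mathrm{image}(\sum_{i}\bar R_{i}) = \sum_{i}\mathrm{image}(\bar R_{i}) = \ell^{-1}(\mathfrak{hol}(q))$.

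The remaining difficulty, which I expect to be the main obstacle, is that $R_{1} + \cdots + R_{N}$ lies in the convex cone generated by $F(q)$ but generally not in $F(q)$ itself. My plan for this descent step is a semi-continuity / genericity argument: rank is lower semi-continuous on the ambient space of algebraic curvature tensors, so the maximum-rank locus is open. I would then show that the maximum of $\mathrm{rank}(\bar R)$ over $R \in F(q)$ equals $\dim \ell^{-1}(\mathfrak{hol}(q))$. If instead a maximum-rank $R^{*} \in F(q)$ had $\mathrm{image}(\bar R^{*}) \subsetneq \ell^{-1}(\mathfrak{hol}(q))$, one could locate $R' \in F(q)$ with image escaping that of $R^{*}$ and, exploiting both the parametrization of $F(q)$ by pairs (basepoint $x$, curve $c$ from $x$ to $q$) and the invariance of $F(q)$ under $\Phi(q)$ (given by $\phi R = \tau_{\gamma \cdot c}\mathcal{R}_{x}$ for a loop $\gamma$ of holonomy $\phi$), perturb $R^{*}$ within $F(q)$ so as to pick up a direction in $\mathrm{image}(\bar R')$ by the positive semi-definite combination, thereby contradicting the maximality of $R^{*}$.
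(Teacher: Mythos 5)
Your first two paragraphs reproduce the paper's proof essentially verbatim: the authors likewise regard each $R\in F(q)$ as a symmetric endomorphism of $\Lambda^2(\nu_qM)$ via $\ell$, use the semi-definiteness coming from Lemma \ref{lem: 43}(5) to obtain $\ker(R+R')=\ker R\cap\ker R'$ and hence $\mathrm{Im}(R+R')=\mathrm{Im}(R)+\mathrm{Im}(R')$, and iterate over a finite family whose images span $\ell(\mathfrak{hol}(q))$. The paper stops there: its proof also produces only the sum $R_1+\cdots+R_N$ and never returns to a single element of $F(q)$.

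Your third paragraph is therefore aimed at a step the paper does not take, and it cannot be carried out, because the literal claim ``some single $R\in F(q)$ spans $\mathfrak{hol}(q)$'' is false in general. For $R=\tau_c(\mathcal R_x)$, Lemma \ref{lem:11} gives $\dim\mathrm{Im}(R)=\dim\mathrm{Im}(\tilde R^{\perp})$ at $x$, which is at most $\dim\Lambda^2(T_xM)$; for a surface $M^2\subset\mathbb R^{5}$ (essentially Riemannian with $\mathcal D=TM$) this bound is $1$, while a generic such surface has normal holonomy algebra all of $\mathfrak{so}(3)$. So the maximum of the rank over $F(q)$ can be strictly smaller than $\dim\mathfrak{hol}(q)$, and the semicontinuity/maximal-rank scheme has nothing to converge to; moreover, ``perturbing $R^{*}$ within $F(q)$ so as to pick up a direction of $\mathrm{Im}(\bar R')$'' is not a defined operation, since $F(q)$ is not convex and the image-additivity you established applies only to sums, which leave $F(q)$. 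The correct reading is that the lemma's wording is a mild abuse: what is proved, and all that is used downstream, is that some finite sum of elements of $F(q)$ (an element of the convex cone generated by $F(q)$) has curvature endomorphisms spanning $\mathfrak{hol}(q)$; such a sum is still an algebraic curvature tensor, still positive semi-definite on $\Lambda^{2}(\nu_qM)$, and that is exactly the hypothesis of Proposition \ref{prop:Po} invoked in Theorem \ref{thm:NholPo}. Delete your third step and restate the conclusion for the cone; your first two paragraphs then constitute a complete proof, identical in substance to the paper's.
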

	\begin{proof}
		Any $R\in \mathcal F(q)$ will be regarded as a symmetric  endomorphism of 
		$\Lambda ^2(\nu _qM)\underset{\ell}{\simeq} \mathfrak {so}(\nu _qM)$. By means of  this identification  (\ref{eq:hol}) is equivalent to   
		\begin{equation}\label{eq:hol2}
			\ell (\mathfrak {hol}(q))= \text{linear span of }\{\mathrm{Im}(R): R\in  F(q)\},
		\end{equation}
		where $\mathrm{Im}$ denotes the image. 
		
		One has that 
		$\mathrm{Im}(R)^\perp = \ker(R)$, for all $R\in  F (q)$. 
		If $R, R'\in  F(q)$, then 
		$$(\mathrm{Im}(R) + \mathrm{Im}(R'))^\perp = \ker (R)\cap \ker (R').$$
		Let $u\in \Lambda ^2(\nu_qM)$ that belongs to $\ker (R+R')$. Then 
		$$0= \langle (R+R')(u), u\rangle = \langle R(u), u\rangle + 
		\langle R'(u), u\rangle.$$
		Since both $\langle R(u), u\rangle$ and  $\langle R'(u), u\rangle$ are non-positive, then $\langle R(u), u\rangle  =  \langle R'(u), u\rangle = 0$. Then, by Lemma \ref{lem: 43} (5), 
		$R(u)= R'(u) = 0$. Then $\ker (R+R')\subset \ker (R)\cap \ker (R')$. Since the other inclusion is trivial we obtain the equality. Hence, $$\mathrm{Im}(R+R')=  \ker (R+R')^\perp = 
		(\ker (R)\cap \ker (R'))^\perp = \mathrm{Im}(R) + \mathrm{Im}(R').$$ 
		Since $\ell (\mathfrak {hol}(q))$ is the sum of the images of a finite number of elements of $F(q)$, by making use of the previous argument, we conclude the proof.
	\end{proof}
	
	Let us recall the concept of weak irreducibility. Let $\mathbb V$ be a vector space endowed with an inner product $\langle \, ,\, \rangle$, with signature $s$, and let $G$ be a Lie subgroup of $SO(\mathbb V, \langle \, ,\, \rangle)$. We say that $G$ acts on $\mathbb V$ {\it weakly irreducibly} if any $G$-invariant proper subspace of $\mathbb V$ is degenerate (i.e., $\langle \, ,\, \rangle$ is degenerate on $\mathbb V$). 
	
	With the same proof as in \cite {O1} (see also Section 3.3 of  \cite{BCO}) we have the following:

	\begin{prop}\label{prop:NHT}
		Let  $\Phi(q)$ be the  restricted normal holonomy group at $q$ of an essentially Riemannian submanifold $M$ of $\mathbb R^{r,s}$. Then the normal space decomposes as 
		$\nu _qM = \mathbb V_0 \oplus \cdots \oplus \mathbb V_k$, 
		orthogonal direct sum of non-degenerate $\Phi(q)$-invariant subspaces
		and $\Phi = \Phi _ 0\times \cdots \times \Phi_k$,  where $\Phi _0 = \{Id\}$ and 
		$\Phi_i $ acts trivially on $\mathbb V_j$ if $i\neq j$ and weakly irreducible on $\mathbb V_i$ for $i\geq 1$. (Note that \( \mathbb{V}_0 \) is contained, possibly properly, in the set of fixed vectors of \( \Phi(q) \).)
	\end{prop}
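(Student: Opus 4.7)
The strategy is to adapt the proof of the normal holonomy theorem from \cite{O1} (see also \cite[\S 3.3]{BCO}) to the pseudo-Riemannian setting, replacing the automatic Riemannian positivity by the strong semi-definiteness provided by Lemma~\ref{lem: 43}(5).

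The first step is to invoke Lemma~\ref{lem:unR} to obtain a single algebraic curvature tensor $R\in F(q)$ with $\mathfrak{hol}(q)=\mathrm{span}\{R_{\xi,\eta}:\xi,\eta\in\nu_qM\}$. Viewed via $\ell$ as a self-adjoint endomorphism of $\Lambda^{2}(\nu_qM)$, this $R$ satisfies $\langle R(u),u\rangle\ge 0$ with equality if and only if $R(u)=0$, even though the inner product on $\Lambda^{2}(\nu_qM)$ is indefinite. This strong positivity is the substitute for elementary Euclidean positivity and is what permits the decomposition arguments to go through.

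Next, I would construct the decomposition. Let $\mathbb V_0\subset\nu_qM$ be the subspace fixed pointwise by $\Phi(q)$. For $\xi\in\mathbb V_0$ one has $R_{\eta,\zeta}\xi=0$ for all $\eta,\zeta$; the pair-swap symmetry of the algebraic curvature tensor $R$ together with the non-degeneracy of $\langle\,,\,\rangle$ on $\nu_qM$ then forces $R_{\xi,\mu}=0$ for every $\mu$, so that $\xi\wedge\mu\in\ker(R)$ for all $\mu$. Using this together with the strict positivity of $R$ on $\Lambda^{2}(\nu_qM)$, one verifies that $\mathbb V_0$ is non-degenerate, hence $\nu_qM=\mathbb V_0\oplus\mathbb V_0^\perp$ is a $\Phi(q)$-invariant orthogonal decomposition into non-degenerate pieces. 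One then proceeds by induction on $\mathbb V_0^\perp$: given a proper non-degenerate $\Phi(q)$-invariant subspace $W\subsetneq\mathbb V_0^\perp$, the complement $W^\perp\cap\mathbb V_0^\perp$ is again $\Phi(q)$-invariant and non-degenerate; invariance of $W$ together with the algebraic Bianchi identity and the positivity of $R$ forces the mixed components $R_{\xi,\eta}$, with $\xi\in W$ and $\eta\in W^\perp\cap\mathbb V_0^\perp$, to vanish. Hence $\mathfrak{hol}(q)$ splits as a direct sum along the two factors and correspondingly $\Phi(q)=\Phi_W\times\Phi_{W^\perp\cap\mathbb V_0^\perp}$, each factor acting trivially on the other subspace. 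Iterating until no further non-degenerate proper invariant subspace can be split off produces the stated decomposition into weakly irreducible non-degenerate summands.

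The main obstacle, and the source of the essential difference with the Riemannian case, is the possible presence of $\Phi(q)$-invariant subspaces on which the induced inner product is degenerate: for such subspaces the usual orthogonal complement is not complementary and no splitting is immediately available. The essentially Riemannian hypothesis defuses this difficulty precisely because it yields Lemma~\ref{lem: 43}(5); this lets one treat $R$ as an essentially positive definite operator on $\Lambda^{2}(\nu_qM)$ and carry out the decomposition arguments of \cite{O1,BCO} almost verbatim.
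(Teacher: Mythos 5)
Your overall strategy is the right adaptation of \cite{O1}: reduce to a single curvature tensor $R\in F(q)$ via Lemma~\ref{lem:unR}, split off non-degenerate invariant subspaces, and use the strong semi-definiteness of Lemma~\ref{lem: 43}(5) together with the Bianchi identity to kill the mixed curvature terms and obtain the product structure. That last part is correct: for $W$ invariant and non-degenerate, $\xi\in W$, $\eta\in W^{\perp}$, one has $R_{\xi,\eta}\eta\in W^{\perp}$, hence $\langle R_{\xi,\eta}\eta,\xi\rangle=0$, hence $R(\xi\wedge\eta)=0$ by strong semi-definiteness; Bianchi then gives $R_{\xi,\eta}\zeta=0$ for $\xi,\eta\in W$, $\zeta\in W^{\perp}$, and the Lie algebra splits.

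The gap is in your first step: you define $\mathbb V_0$ as the full fixed-point set of $\Phi(q)$ and assert that positivity of $R$ forces it to be non-degenerate. This does not follow from the facts you invoke, and at the level of generality at which you argue it is false. Take $\mathbb V=\mathbb R^{2,1}$ with orthonormal basis $e_1,e_2,e_3$ ($\epsilon_3=-1$), set $\xi_0=e_2+e_3$ (null) and $w_0=e_1\wedge\xi_0$, which is a null element of $\Lambda^2(\mathbb V)$, and define $R(u)=\langle w_0,u\rangle\,w_0$. Then $R$ is an algebraic curvature tensor (the $2$-form $\langle \ell(w_0)\,\cdot\,,\cdot\rangle$ is decomposable, so the first Bianchi identity holds), it satisfies $\langle R(u),u\rangle=\langle w_0,u\rangle^2\geq 0$ with equality if and only if $R(u)=0$, its curvature endomorphisms span the nilpotent line $\mathbb R\,\ell(e_1\wedge\xi_0)$, and the fixed set of the associated one-parameter group is $\ker \ell(e_1\wedge\xi_0)=\mathbb R\,\xi_0$, a totally isotropic line. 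So every hypothesis you use holds, every intermediate conclusion you draw ($R_{\xi_0,\mu}=0$, $\xi_0\wedge\mu\in\ker R$) holds, and yet the fixed set is degenerate. Note that the Proposition itself is not contradicted: in this example the whole space is a single weakly irreducible factor and the correct $\mathbb V_0$ is $\{0\}$. The point is that $\mathbb V_0$ in the statement is not the fixed-point set; isotropic fixed vectors may survive inside the weakly irreducible factors $\mathbb V_i$, $i\geq 1$. The repair is to run your recursion on all of $\nu_qM$ from the start --- repeatedly split off a proper non-degenerate invariant subspace together with its orthogonal complement until every factor is weakly irreducible --- and only afterwards define $\mathbb V_0$ as the (automatically non-degenerate) orthogonal sum of those factors on which $\Phi(q)$ acts trivially. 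With that modification the rest of your argument goes through.
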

Note that the existence of a
splitting into weakly irreducible invariant subspaces of $\Phi(q)$,  plus a factor contained in the fixed vectors, is a standard fact. 
\vspace{.2cm}

	With the same proof of the normal holonomy theorem in 
	\cite{O1} (see also \cite {BCO}, Theorem 3.2.1) one obtains: 
	
	\begin{theorem}\label{th:NHT} 
		Let $M^{n,s}$ be an essentially Riemannian submanifold of $\mathbb R^{r,s}$ of the same signature as the ambient space. Then the restricted  normal holonomy $\Phi(q)$  of $M$ at $q$ acts on the orthogonal complement of its fixed set as the isotropy representation of a semisimple Riemannian symmetric space.
	\end{theorem}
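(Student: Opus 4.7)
The plan is to transcribe the argument of \cite{O1} (see also \cite[Section 3.2]{BCO}), after noting that the hypotheses of Theorem \ref{th:NHT} put us in a setting formally indistinguishable from the classical Riemannian case. Since $M^{n,s}$ and $\mathbb R^{r,s}$ share the same negative index $s$, the normal space $\nu_q M$ is positive definite and $\Phi(q) \subset SO(\nu_q M)$ sits inside a compact Lie group. Weak irreducibility on each non-degenerate factor in Proposition \ref{prop:NHT} therefore collapses to ordinary irreducibility, so it suffices to show that $\Phi(q)$ acts as an $s$-representation on each irreducible piece of the orthogonal complement of its fixed set.

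Next, I would invoke Lemma \ref{lem:unR} to pick a single $R \in F(q)$ with $\mathfrak{hol}(q) = \{R_{\xi,\eta} : \xi, \eta \in \nu_q M\}$. By Lemma \ref{lem: 43} and the fact that $\nabla^\perp$-parallel transport is an isometry of the tensor algebras, $R$ is an algebraic curvature tensor on the Euclidean vector space $\nu_q M$, positive semi-definite as an operator on $\Lambda^2(\nu_q M)$. This is precisely the input required by the Simons--Olmos construction: I would form the holonomy Lie algebra $\mathfrak g := \mathfrak{hol}(q) \oplus \nu_q M$ with bracket
\[
[A,B] = AB - BA, \qquad [A,\xi] = A\xi, \qquad [\xi,\eta] = R_{\xi,\eta},
\]
and involution $\sigma(A+\xi) = A - \xi$. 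The first Bianchi identity for $R$ yields the Jacobi identity on $\Lambda^3 \nu_q M$, the $\mathfrak{hol}(q)$-invariance of $R$ (the Simons step, discussed below) yields the remaining Jacobi relations, and $\sigma$ is then a Lie algebra involution whose $\pm 1$-eigenspaces are $\mathfrak{hol}(q)$ and $\nu_q M$. Irreducibility of $\Phi(q)$ on the factor under consideration promotes $(\mathfrak g,\sigma)$ to a semisimple orthogonal symmetric Lie algebra, whose isotropy representation is exactly the $\Phi(q)$-action on $\nu_q M$.

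The delicate step, and the one I expect to be the main obstacle, is the verification of the invariance identity $[A, R_{\xi,\eta}] = R_{A\xi,\eta} + R_{\xi,A\eta}$ for every $A \in \mathfrak{hol}(q)$ and $\xi,\eta \in \nu_q M$. This is the Simons-type algebraic argument from \cite{O1}, and it depends only on the algebraic Bianchi identity for $R$, the compactness of $\Phi(q) \subset SO(\nu_q M)$, the irreducibility reduction of the first paragraph, and the positivity of $R$ on $\Lambda^2(\nu_q M)$ established in Lemma \ref{lem: 43}(5) together with Lemma \ref{lem:unR}. All of these ingredients hold in our setting precisely because of the matching signature assumption, which forces $\nu_q M$ to be Riemannian and therefore eliminates the pseudo-Riemannian complications that animate the rest of the paper. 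Once the invariance is in place, the classical proof of the normal holonomy theorem transfers verbatim, and the theorem follows.
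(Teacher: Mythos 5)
Your proposal is correct and takes essentially the same route as the paper, which proves Theorem \ref{th:NHT} simply by asserting that the proof of \cite{O1} (cf.\ \cite{BCO}, Theorem 3.2.1) carries over; your observations that the matching signature forces $\nu_qM$ to be positive definite (so weak irreducibility becomes irreducibility and $\Phi(q)$ is compact) and that Lemma \ref{lem:unR} supplies the single positive semi-definite algebraic curvature tensor spanning $\mathfrak{hol}(q)$ are exactly the justification for that assertion. One small imprecision: in the Simons step the invariance identity is not verified for the tensor $R$ of Lemma \ref{lem:unR} itself but for its average over the compact group $\Phi(q)$, which is nonzero precisely because of the semi-definiteness on $\Lambda^2(\nu_qM)$ (the trace is preserved by averaging) — this is the point where compactness and positivity actually enter, as your list of ingredients correctly anticipates.
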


	\subsection {Weakly polar actions}\label{sub:w-p-a}\ 
	
	Let $G$ act by isometries on a pseudo-Riemannian manifold 
	$M^{r,s}$, and let $\mathfrak g$ be its Lie algebra. Let $\Omega$ be the open and dense subset of $M$ such that the dimension of the $G$-orbits is locally constant. Let  
	$\mathcal V$ be the distribution on $\Omega$ given by the tangent spaces to the $G$-orbits, and let $\mathcal H := \mathcal V^\perp$
	be the distribution of normal spaces to the $G$-orbits. If $q\in \Omega$,  then $\dim \mathcal V _q + \dim \mathcal H _q = \dim M = r+s$. However  $\mathcal V_q\cap \mathcal H_q$ could be non-trivial if $\mathcal V_q$ is a degenerate subspace.

 \vspace{.2cm}
 
	The proof of the following lemma is standard. 
	
	\begin{lemma}\label{lem:wslice} Let $G$ be a Lie group acting on a manifold $M$ and let  $G\cdot p$ be a locally maximal dimensional orbit. Then 
 \begin{equation}\mathfrak g _p \, . \, T_pM\subset 
			T_p(G\cdot p) \end{equation}
        where $\mathfrak g_p$ denotes the isotropy algebra at $p$.
  
	\end{lemma}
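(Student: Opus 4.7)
The plan is to establish each direction separately, with the forward implication being a straightforward smoothness argument and the backward one requiring a slice-type reasoning.

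For the forward direction, suppose $G\cdot p$ has locally maximal dimension. Since $q\mapsto \dim G\cdot q$ is lower semi-continuous, this dimension is locally constant near $p$, so $q\mapsto T_q(G\cdot q)$ defines a smooth subbundle $\mathcal{V}$ of $TM$ in a neighborhood of $p$. For $X\in\mathfrak{g}_p$, the fundamental vector field $X^*$ is a smooth section of $\mathcal{V}$ vanishing at $p$, and any smooth section of a smooth subbundle vanishing at a point has its differential at that point taking values in the fiber (check in a local frame). Hence $dX^*_p(T_pM)\subset \mathcal{V}_p=T_p(G\cdot p)$, which after identifying the isotropy action $X\cdot v$ with $dX^*_p(v)$ gives $\mathfrak{g}_p\cdot T_pM\subset T_p(G\cdot p)$.

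For the converse, I assume the inclusion holds. Choose a complement $W$ to $T_p(G\cdot p)$ in $T_pM$ and consider a transverse slice $S=\exp_p(W)$ at $p$. The hypothesis says precisely that the induced representation of $\mathfrak{g}_p$ on the quotient $T_pM/T_p(G\cdot p)\cong W$ is trivial, so the connected component $G_p^0$ acts on $S$ by diffeomorphisms fixing $p$ whose differential at $p$ is the identity. The plan is to upgrade this infinitesimal triviality to an actual triviality of the $G_p^0$-action on $S$ near $p$; once this is in place, every $s\in S$ close to $p$ satisfies $G_s\supset G_p^0$, whence $\dim G\cdot s\le \dim G\cdot p$. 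Combined with the lower semi-continuity of orbit dimension and the fact that every nearby orbit meets $S$ by transversality, this forces $\dim G\cdot q=\dim G\cdot p$ for $q$ in a neighborhood of $p$, as desired.

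The main obstacle is precisely this last upgrade, from trivial $1$-jet at $p$ to trivial action on a neighborhood of $p$ in $S$. In the isometric context implicit throughout the paper (actions by isometries of a pseudo-Riemannian space), this is classical: a local isometry fixing a point is determined by its differential there, so trivial differential forces the identity locally. This is what makes the lemma ``standard'' in the setting of the paper. In a more general smooth context one would instead invoke a Sussmann-type orbit-regularity argument, using the hypothesis to show directly that $T_q(G\cdot q)$ has locally constant rank near $p$.
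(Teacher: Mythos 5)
The paper offers no argument for this lemma (it is simply declared standard and followed by a \qed), so your proposal has to stand on its own. Your forward direction does: local maximality together with lower semicontinuity of the orbit dimension makes $q\mapsto T_q(G\cdot q)$ a smooth subbundle near $p$, and the linearization at $p$ of a section vanishing at $p$ takes values in the fibre. This is also the only direction the paper ever uses (in Lemma~\ref{lem:slice} and in Proposition~\ref{prop:Po} the relevant point always lies in $\Omega$).

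The backward direction has a genuine gap exactly at the step you flag as the ``main obstacle'', and your proposed repair does not close it. From $\mathfrak g_p\cdot T_pM\subset T_p(G\cdot p)=:V$ you infer that each $g\in G_p^0$ has $d_pg=\mathrm{id}$; but the hypothesis only makes the induced action on the quotient $T_pM/V$ trivial, while $d_pg$ may act nontrivially on $V$ itself (take $G=SO(3)$ on $\mathbb R^3$ and $p\neq 0$: the stabilizer rotates $T_p(G\cdot p)$). So the rigidity statement ``an isometry fixing a point with trivial differential is locally the identity'' is being applied to a false premise; moreover $G_p^0$ need not even preserve $\exp_p(W)$ for an arbitrary complement $W$. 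The honest route in the isometric setting with $V$ non-degenerate is: $d_pg$ preserves $V$ and the metric, hence preserves $V^\perp$; the hypothesis gives $\mathfrak g_p\cdot V^\perp\subset V\cap V^\perp=\{0\}$, so $G_p^0$ acts trivially on $V^\perp$ and therefore fixes $\exp_p(V^\perp)$ pointwise, and that is the slice on which to run your transversality argument. Finally, your fallback ``Sussmann-type'' argument for a general smooth action cannot exist, because the implication is false at that level of generality: for the (complete) flow of $\frac{x^2+y^2}{1+(x^2+y^2)^2}\,\partial_x$ on $\mathbb R^2$, the origin is a fixed point with $\mathfrak g_0\cdot T_0M=\{0\}\subset T_0(G\cdot 0)$, yet every nearby orbit is one-dimensional. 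The ``if'' direction genuinely needs the isometric (or some comparable rigidity) hypothesis, which the statement as printed omits.
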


\qed
 
	\begin{lemma}\label{lem:slice}
		Let $G$ be a Lie group of  isometries of a pseudo-Riemannian manifold $(M, \langle, \, ,\, \rangle)$. Let $G\cdot p$ be a (locally)  maximal dimensional, possible degenerate, orbit.
		Then the identity  component $G_p^o$ of the isotropy group at $p$ acts trivially on the normal space $\nu _p(G\cdot p)$.
	\end{lemma}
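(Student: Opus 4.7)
Since $G_p^o$ is connected, it suffices to show that the Lie algebra $\mathfrak g_p$ acts trivially on $\nu_p(G\cdot p)$. I first record two standard facts. Each $g\in G_p^o$, being an isometry fixing $p$, induces a linear isometry $g_*$ of $T_pM$ that preserves the tangent space $\mathcal V_p := T_p(G\cdot p)$ of the orbit (move curves in $G\cdot p$ through $p$ by $g$), and hence, by isometry, also preserves the normal space $\mathcal H_p := \nu_p(G\cdot p) = \mathcal V_p^\perp$. Differentiating at the identity, each $X\in \mathfrak g_p$ yields a skew-symmetric endomorphism $\rho(X)\in \mathfrak{so}(T_pM,\langle\,,\,\rangle)$ that preserves both $\mathcal V_p$ and $\mathcal H_p$. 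The goal becomes $\rho(X)|_{\mathcal H_p}=0$ for every $X\in \mathfrak g_p$.

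Next I would invoke Lemma \ref{lem:wslice}: the hypothesis that $G\cdot p$ is locally of maximal dimension gives
\[
\rho(X)\bigl(T_pM\bigr)\subset \mathcal V_p \qquad \text{for every } X\in \mathfrak g_p.
\]
Given $v\in \mathcal H_p$ and an arbitrary $u\in T_pM$, skew-symmetry of $\rho(X)$ together with this inclusion yields
\[
\langle u,\rho(X)v\rangle \;=\; -\langle \rho(X)u,v\rangle \;=\; 0,
\]
since $\rho(X)u\in \mathcal V_p$ while $v\in \mathcal V_p^\perp$. As $u$ ranges over all of $T_pM$ and the ambient inner product on $T_pM$ is non-degenerate, this forces $\rho(X)v=0$, which completes the argument.

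\textbf{Main obstacle.} The subtlety, and the reason the statement is not trivial in the pseudo-Riemannian setting, is that $G\cdot p$ may be degenerate, so $\mathcal V_p\cap \mathcal H_p$ can be non-zero and the restriction of $\langle\,,\,\rangle$ to $\mathcal H_p$ may itself be degenerate. If one tested $\langle u,\rho(X)v\rangle$ only for $u\in \mathcal H_p$, the best one could conclude is that $\rho(X)v$ lies in the null intersection $\mathcal V_p\cap \mathcal H_p$, which is strictly weaker than vanishing. The decisive move is therefore to pair $\rho(X)v$ with all of $T_pM$, exploiting the non-degeneracy of the ambient metric rather than of its restriction to any of the subspaces in play; this is exactly what makes the lemma robust enough to serve as the basic slice statement in the weakly polar framework introduced in \S\ref{sub:w-p-a}.
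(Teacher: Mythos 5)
Your proof is correct and follows essentially the same route as the paper: invoke Lemma \ref{lem:wslice} to get $\mathfrak g_p\cdot T_pM\subset T_p(G\cdot p)$, use skew-symmetry of the isotropy action to transfer this to $\langle T_pM,\,\mathfrak g_p\cdot\nu_p(G\cdot p)\rangle=0$, and conclude from non-degeneracy of the ambient metric. You merely spell out the steps (and the degenerate-orbit subtlety) that the paper's one-line computation leaves implicit.
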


 \begin{proof}
		Let $\mathfrak g$ and $\mathfrak g _p$ be the Lie algebras of $G$ and $G_p$, respectively.
		Then, by Lemma \ref{lem:wslice}, 
		$$0 = 
		\langle \mathfrak g _p .T_pM, \nu _p(G\cdot p)\rangle 
		= \langle T_pM \, , \,  \mathfrak g_p \, . \, \nu _p(G\cdot p)\rangle $$
	\end{proof}

	The following lemma is well-known in the Riemannian case. The same arguments apply to the pseudo-Riemannian case.

	\begin{lemma}\label{lem:intg} We are under the previous notation and assumptions. 
		The distribution $\mathcal H$ is integrable if and only if it is autoparallel.
	\end{lemma}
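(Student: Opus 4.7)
The plan is to treat the two implications separately. The direction autoparallel $\Rightarrow$ integrable is free: if $\nabla_X Y$ and $\nabla_Y X$ lie in $\Gamma(\mathcal H)$ whenever $X,Y\in\Gamma(\mathcal H)$, then $[X,Y]=\nabla_X Y-\nabla_Y X\in\Gamma(\mathcal H)$ since the Levi-Civita connection is torsion-free. So all the content lies in the converse, and I would attack it by testing $\nabla_X Y$ against the Killing vector fields that generate $\mathcal V$.

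Concretely, let $X,Y\in\Gamma(\mathcal H)$ on $\Omega$ and let $V$ be the fundamental vector field on $M$ associated with an arbitrary $\zeta\in\mathfrak g$. Since $Y\in\mathcal V^{\perp}$, we have $\langle Y,V\rangle=0$, so differentiating along $X$ and using the metric compatibility of $\nabla$ yields
$$\langle\nabla_X Y,V\rangle=-\langle Y,\nabla_X V\rangle,$$
and symmetrically $\langle\nabla_Y X,V\rangle=-\langle X,\nabla_Y V\rangle$. The Killing equation $\langle\nabla_X V,Y\rangle+\langle\nabla_Y V,X\rangle=0$ then gives, by summing,
$$\langle\nabla_X Y+\nabla_Y X,V\rangle=0.$$
If one now assumes $\mathcal H$ is integrable, then $[X,Y]\in\Gamma(\mathcal H)$, so the antisymmetric counterpart $\langle\nabla_X Y-\nabla_Y X,V\rangle=\langle[X,Y],V\rangle$ also vanishes. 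Adding the two identities gives $\langle\nabla_X Y,V\rangle=0$ for every fundamental field $V$. Because $\mathcal V_p$ is, by definition, spanned by the values at $p$ of such fundamental fields, we conclude that $\nabla_X Y$ is orthogonal to $\mathcal V$, i.e.\ $\nabla_X Y\in\mathcal V^{\perp}=\mathcal H$, which is exactly autoparallelism.

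The only point that requires a remark in the pseudo-Riemannian setting is that $\mathcal V$ may be degenerate and hence $\mathcal V\cap\mathcal H$ may be non-trivial; this is what I would expect to be the main conceptual hurdle before verifying the calculation. However, the proof above never uses non-degeneracy of either $\mathcal V$ or $\mathcal H$: it only uses that $\mathcal H=\mathcal V^{\perp}$ and that $\mathcal V$ is spanned by restrictions of Killing vector fields, both of which hold verbatim on $\Omega$. So the standard Riemannian argument transfers without modification, exactly as the text predicts.
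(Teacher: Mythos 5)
Your argument is correct and is essentially the paper's own proof: both establish, via metric compatibility and the Killing equation, that the symmetric part $\langle\nabla_X Y+\nabla_Y X,V\rangle$ vanishes, so that $\langle[X,Y],V\rangle=2\langle\nabla_X Y,V\rangle$ for every fundamental field $V$, from which both implications follow at once. Your observation that non-degeneracy of $\mathcal V$ or $\mathcal H$ is never needed is exactly the point the paper is making by restating this standard lemma in the pseudo-Riemannian setting.
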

	\begin{proof}
		Let $\xi, \eta$ be local fields on $\Omega$ that lie in 
		$\mathcal H$ and let $X$ be an arbitrary Killing field induced by $G$. 
		
		Since $\langle \xi , X\rangle = 0$, then 
		$0 = \eta \langle \xi , X\rangle = 
		\langle \nabla _\eta \xi , X\rangle + 
		\langle \xi , \nabla _\eta X\rangle $ (and the same is true by interchanging $\xi$ and  $\eta$). Then  
		$\langle \nabla _\eta \xi , X\rangle = -
		\langle \xi , \nabla _\eta X\rangle = 	
		\langle \eta , \nabla _\xi X\rangle$, 
		where the last equality is due to the Killing equation. 
		This implies that  $\langle \nabla _\eta \xi , X\rangle$ is skew-symmetric in $\xi , \eta$ and hence, 
		$\langle [\xi , \eta], X\rangle = 
		\langle \nabla _\xi  \eta - \nabla _\eta \xi,  X\rangle = 
		2\langle \nabla _\xi  \eta , X\rangle$.
	\end{proof}

	\begin{definition}\label{wP} The group $G$ acts {\it weakly polarly} on $M$ if the distribution $\mathcal H$ of $\Omega$ is integrable. 
	\end{definition}

	\begin{prop} \label{prop:Po} Let $\mathbb V$ be a vector space with a non-degenerate inner product of signature $s$, and  let  $G$ be a Lie subgroup of $SO(\mathbb V)$.  Assume that there exists an algebraic pseudo-Riemannian curvature tensor $R$ of $\mathbb V$ such that the curvature endomorphisms linearly span the Lie algebra  $\mathfrak g$ of $G$.  Furthermore, assume that $R$, regarded as a symmetric endomorphism of 
		$\Lambda ^2(\mathbb V)$, is positive semi-definite (i.e., if $u\in \Lambda ^2(\mathbb V)$ satisfies  $\langle R(u), u\rangle = 0$, then $R(u)=0$). Let $N$ be a non-degenerate submanifold of $\mathbb V$ which is locally invariant under the action of $G$. 
		\begin{enumerate}[(i)]
			\item $G$ acts weakly polarly on $N$.
			\item $T_q(G\cdot q)$ is invariant under any shape operator of $N$ at $q$, for all $q\in\Omega$, where $\Omega$ is the open and dense subset of $N$ where the dimensions of the $G$ orbits are locally constant.
		\end{enumerate}
		
	\end{prop}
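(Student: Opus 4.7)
The strategy will be to reduce both parts to a single algebraic identity for the curvature tensor $R$, then to establish that identity using the first Bianchi identity together with the positive semi-definiteness of $R$. Fix $q\in\Omega$ and set $W := T_q(G\cdot q)^{\perp,\mathbb V}$.

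For (i), I will invoke Lemma~\ref{lem:intg} to reduce weak polarity to autoparallelism of $\mathcal H$. Tracking the computation in its proof and using the flatness of $\mathbb V$ (so $\nabla^{\mathbb V}_\xi X_B = B\xi$ for the Killing field $X_B$ induced by $B\in\mathfrak g$), this becomes
\[
\langle\nabla^N_\xi\eta,X_B\rangle = -\langle\eta,(B\xi)^T\rangle = -\langle B\xi,\eta\rangle, \qquad \xi,\eta\in\mathcal H_q.
\]
For (ii) the same flatness yields $\alpha(Y,X_B) = (BY)^{\nu_q N}$, whence, for $\xi\in\nu_q N$ and $Y\in T_q N$,
\[
\langle A_\xi X_B, Y\rangle = \langle BY,\xi\rangle = -\langle Y, B\xi\rangle.
\]
Since $T_q N$ is non-degenerate, $T_q(G\cdot q) = \mathcal H_q^{\perp,T_q N}$, so $A_\xi X_B\in T_q(G\cdot q)$ iff $\langle B\xi,\eta\rangle = 0$ for every $\eta\in\mathcal H_q$. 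Because $\mathcal H_q\subset W$ and $\nu_q N\subset W$ (the latter since $T_q(G\cdot q)\subset T_q N$), both parts reduce to the single algebraic statement
\[
\langle Bu,v\rangle = 0 \qquad \text{for all } B\in\mathfrak g \text{ and } u,v\in W.
\]

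Next I will use the spanning hypothesis. Writing $B = R_{\alpha,\beta}$ and applying the pair symmetry $\langle R_{\alpha,\beta}u,v\rangle = \langle R_{u,v}\alpha,\beta\rangle$ of the algebraic curvature tensor, the problem becomes $R_{u,v} = 0$ for $u,v\in W$. The same pair symmetry applied to the defining condition $\langle w, R_{\alpha,\beta}q\rangle = 0$ yields the characterization $w\in W$ iff $R_{w,q} = 0$. Hence for $u,v\in W$ the first Bianchi identity $R_{u,v}q + R_{v,q}u + R_{q,u}v = 0$ gives $R_{u,v}q = 0$, i.e.\ $R_{u,v}\in\mathfrak g_q$. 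By positive semi-definiteness, $R_{u,v} = 0$ is equivalent to $\langle R(u\wedge v),u\wedge v\rangle = 0$, i.e.\ to $\langle R_{u,v}u,v\rangle = 0$.

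The hardest step will be this final vanishing $\langle R_{u,v}u,v\rangle = 0$. The plan is to show that the entire subspace $\Lambda^2 W\subset\Lambda^2\mathbb V$ lies in $\ker R$. Two structural inputs are available: $q\in W$ (since $\langle q,Bq\rangle = 0$ for skew $B$), so $q\wedge W\subset\ker R$; and $R_{u,v}\in\mathfrak g_q$ preserves $W$ under the slice representation. Iterating the cyclic Bianchi identity on triples $u,v,w\in W$ and exploiting that the restriction $R|_{\Lambda^2 W}$ inherits both the positive semi-definite structure and the triviality $R|_{q\wedge W} = 0$ should drive an induction on $\dim W$, with base case handled trivially by the slice representation of $\mathfrak g_q$ on $W$ in strictly lower dimension. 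Once $R|_{\Lambda^2 W} = 0$ is established, in particular $\langle R_{u,v}u,v\rangle = 0$, completing the proof of both parts.
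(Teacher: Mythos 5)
Your reduction of both parts to an algebraic identity for $R$ is sound as far as it goes, and the first half of the algebra (the pair symmetry, the characterization $w\in W\Leftrightarrow R_{w,q}=0$, the Bianchi step giving $R_{u,v}q=0$, hence $R_{u,v}\in\mathfrak g_q$, and the observation that positive semi-definiteness reduces everything to $\langle R_{u,v}u,v\rangle=0$) coincides with the paper's argument. But there is a genuine gap in the final step, and it is twofold. First, the target you set yourself, namely $R\vert_{\Lambda^2W}=0$ with $W=T_q(G\cdot q)^{\perp,\mathbb V}$, is strictly stronger than what is needed and is \emph{false} in general: take $\mathbb V=\mathbb R^2\oplus\mathbb R$, $G=SO(2)$ rotating the first factor, $R$ the constant-curvature tensor of the plane extended by zero (so $R\geq 0$ and its curvature endomorphisms span $\mathfrak g$), and $N$ a piece of the fixed axis $\{0\}\times\mathbb R$. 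Every $q\in N$ lies in $\Omega$, $T_q(G\cdot q)=0$, so $W=\mathbb V$, yet $R_{e_1,e_2}\neq 0$ with $e_1,e_2\in\nu_qN\subset W$. The Proposition holds trivially there, but your intermediate claim does not, so no amount of iterating the Bianchi identity can establish it; the ``induction on $\dim W$'' you sketch has no chance of closing. Tracing your own reductions, what parts (i) and (ii) actually require is only $R_{u,v}=0$ for $u\in\mathcal H_q$ and $v\in\mathcal H_q\cup\nu_qN$, i.e.\ with at least one argument \emph{tangent} to $N$.

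Second, even for that correct weaker statement your proposal never supplies the decisive ingredient, which is where the hypothesis $q\in\Omega$ enters: by Lemma \ref{lem:wslice} applied to the $G$-action on $N$, local maximality of the orbit dimension gives $\mathfrak g_q\cdot T_qN\subset T_q(G\cdot q)$. Hence for $u\in\mathcal H_q\subset T_qN$ and $v\in W$ one has $R_{u,v}\in\mathfrak g_q$ (your Bianchi step), so $R_{u,v}u\in T_q(G\cdot q)\perp v$, giving $\langle R_{u,v}u,v\rangle=0$ and then $R_{u,v}=0$ by positive semi-definiteness. This one-line use of Lemma \ref{lem:wslice} is exactly the paper's argument and is what replaces your entire final paragraph; your appeal to ``the slice representation'' gestures at it but is never turned into an argument, and without some such input the vanishing cannot be deduced (as the counterexample shows, positivity of $R$ and the Bianchi identity alone are not enough).
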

	
	\begin{proof}
		Let $u,v\in \nu _qN$ and consider the Killing field $X$ of $\mathbb V$ given by $X_x = R_{u,v}x$. Then $X$ is a linear Killing field, so $\nabla _wX = R_{u,v}w$, where $\nabla$ is the usual   Levi-Civita flat connection of $\mathbb V$. 
		
		Let $\mathcal V$ be the distribution of $\Omega$ tangent to the $G$-orbits, and let 
		$\mathcal H = \mathcal V ^\perp$.
		If $x\in N$, then $\xi \in \mathcal H _x$ if and only if $0= \langle R_{u,v} x,\xi\rangle = \langle R_{x,\xi} u,v\rangle$ for all $u,v\in \mathbb V$.  Thus, $\mathcal H_x = 
		\{ \xi \in T_xN:  R_{x,\xi}=0 \}$. Let $\xi , \eta \in \mathcal H_x$. By making use of the Bianchi identity we have that 
		$R_{\xi , \eta}x=0$ and thus, $R_{\xi , \eta} \in \mathfrak g_x$.
		Then, from Lemma \ref{lem:wslice}, 
		$\langle R_{\xi , \eta}\mathcal  H_x,\mathcal H _x\rangle= \{0\}$, and therefore  $\langle R_{\xi , \eta}\xi , \eta\rangle = 0$. Since $R$ is positive semi-definite, we conclude that $R_{\xi , \eta}= 0$, for all $\xi , \eta \in \mathcal H_x$.
		
		Let $\tilde \xi, \tilde \eta$ be fields of $N$ that lie in 
		$\mathcal H$, and let $X$ be the field of $N$ given by 
		$X_x= R_{u,v}x$, where $u,v \in \mathbb V$ are arbitrary. Then $\langle \tilde \xi , X\rangle = 0$,  and hence, differentiating in the direction of $\eta$ one obtains 
		$$\langle \nabla _{\tilde \eta} \tilde \xi, X\rangle = - \langle \tilde \xi , 
		\nabla _{\tilde \eta}X\rangle = - \langle \tilde \xi , 
		R_{u,v} \tilde \eta\rangle = \langle R_{\tilde \xi , \tilde \eta} u,v\rangle =0$$
		and hence, since $u,v$ are arbitrary, $\mathcal H$ is autoparallel. This proves (i).
		
		Let $q\in \Omega$ and let us consider the orbit $G\cdot q$. Let 
		$\xi \in \mathcal H _q$ and let $\eta \in \nu _qN$ be arbitrary. Note that 
		$\xi, \eta$ are orthogonal to $T_q(G\cdot q) = \mathcal V _q$. Then, as in the proof of part (i), $R_{\xi , \eta}q = 0$, and so $R_{\eta ,\xi}$ belongs to the isotropy algebra $\mathfrak g_q$. Then, by Lemma 
		\ref{lem:wslice}, $R_{\eta, \xi }\xi$ belongs to $\mathcal V _q$. Thus,  $\langle R_{\eta , \xi}\xi, \eta \rangle =0$, which implies that  
		$R_{\eta , \xi}=0$. Let , for $u,v\in \mathbb V$, 
		$$\phi _t : = Exp (tR_{u,v})= \mathrm {e}^ {tR_{u,v}}.$$
		Then $\phi _t \xi$ is a field along $c(t): =\phi _t q$ that lies in $\mathcal H _{c(t)}$. Differentiating at $t=0$ one obtains 
		$R_{u,v}\xi = \frac{\mathrm D \,}{\mathrm dt}{\vert _0} \,\phi _t \xi$,  where  $\frac{\mathrm D \,}{\mathrm dt}$ is the ambient covariant derivative along the curve 
		$c(t)$.  Denote the second fundamental form and the shape operator of $N$ as $\alpha$ and $A$, respectively. Then 
		\begin{align*} 
			\begin{split}
			0&= \langle R_{\xi,\eta}u , v \rangle = \langle R_{u,v}\xi , \eta \rangle = \langle \alpha (c'(0), \xi) , \eta \rangle \\
			&= \langle A_\eta  c'(0), \xi \rangle =  \langle A_\eta  R_{u,v}q, \xi \rangle.
			\end{split}
			\end{align*}
		Since the vectors $R_{u,v}q$, $u,v\in \mathbb V$
		span $T_q(G\cdot q) = \mathcal V _q$ and $\xi \in \mathcal H _q$ is arbitrary, we conclude that  
		$A_\eta (T_q(G\cdot q))\subset T_q(G\cdot q)$
		
	\end{proof}

	\begin{theorem}\label{thm:NholPo} Let  $\Phi(q)$ be the  restricted normal holonomy group at $q$ of an essentially Riemannian submanifold $M$ of $\mathbb R^{r,s}$.  Let $N$ be a non-degenerate submanifold of the normal space $\nu _qM$ which is locally invariant by $\Phi(q)$.  Then $\Phi (q)$ acts weakly polarly on $N$. 
	\end{theorem}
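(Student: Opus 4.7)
The plan is to recognize this as a direct application of Proposition~\ref{prop:Po}, taking $\mathbb{V} = \nu_q M$, $G = \Phi(q)$, and supplying the required algebraic curvature tensor via Lemma~\ref{lem:unR}.

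First I would invoke Lemma~\ref{lem:unR} to produce a single tensor $R \in F(q)$ on $\nu_q M$ whose curvature endomorphisms $\{R_{\xi,\eta} : \xi,\eta \in \nu_q M\}$ coincide with the normal holonomy algebra $\mathfrak{hol}(q)$, which is the Lie algebra of $\Phi(q)$. This already verifies the spanning hypothesis of Proposition~\ref{prop:Po}. I also note that $R$, being an element of $F(q)$, is an algebraic pseudo-Riemannian curvature tensor of $\nu_q M$, as recorded in the paragraph preceding Lemma~\ref{lem:unR}.

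Next I would verify the strong positive semi-definiteness required by Proposition~\ref{prop:Po}: if $u \in \Lambda^2(\nu_q M)$ satisfies $\langle R(u), u\rangle = 0$, then $R(u) = 0$. By Lemma~\ref{lem: 43}(5), the adapted normal curvature tensor $\mathcal{R}_x$ has this property at every point $x \in M$. Writing $R = \tau_c(\mathcal{R}_x)$ for some curve $c$ from $x$ to $q$, and extending the linear isometry $\tau_c^\perp : \nu_x M \to \nu_q M$ to the induced isometry on $\Lambda^2$, the relation
\begin{equation*}
\langle R(u), u \rangle_{\nu_q M} = \langle \mathcal{R}_x(\tau_c^{-1}u), \tau_c^{-1}u\rangle_{\nu_x M}
\end{equation*}
transfers the property to $R$. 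This dispatches the one nontrivial verification.

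With both hypotheses of Proposition~\ref{prop:Po} in place, and since $N$ is by assumption a non-degenerate submanifold of $\nu_q M$ locally invariant under $\Phi(q)$, part (i) of that proposition yields that $\Phi(q)$ acts weakly polarly on $N$, which is precisely the conclusion. The main (and essentially only) obstacle is the transfer of the positive semi-definiteness from $\mathcal{R}_x$ to its parallel translates; everything else is bookkeeping between the previously established lemmas.
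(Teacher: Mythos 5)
Your proposal is correct and follows exactly the paper's route: the paper's proof is the one-line observation that the theorem follows from Lemma~\ref{lem:unR} together with Proposition~\ref{prop:Po}, and the positive semi-definiteness of every $R\in F(q)$ under parallel transport that you verify is already recorded in the paragraph preceding Lemma~\ref{lem:unR}. No gaps.
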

	\begin{proof}
		The proof follows immediately from Lemma \ref{lem:unR} and Proposition  \ref{prop:Po}
	\end{proof}

 \begin{rem}\label{rem:parallel-shape} In a degenerate submanifold $S$ of a pseudo-Riemannian manifold,  the normal connection is not defined. Nevertheless,  a section $\tilde \xi$ of the normal bundle $\nu S = 
(T_pS)^\perp$ is called a {\it parallel normal field}, if  for any  tangent field $X$ of $S$, $\nabla _X\tilde \xi$ is a tangent field of $S$, where $\nabla$ is the Levi-Civita connection of the ambient space. Thus, the shape operator $A_{\tilde \xi}$ is defined by 
$A_{\tilde \xi}X := -\nabla _X\tilde \xi$. The same proof of the Gauss formula, since $\nabla$ is torsion-free and the bracket between tangent fields of $S$ is tangent to $S$, proves that $\langle A_{\tilde \xi}X,Y\rangle = \langle A_{\tilde \xi}Y,X\rangle$. Now, assume that $X_p$ is a degenerate vector, and let $Y_p$ be arbitrary. Then the above equality shows that $A_{\tilde \xi_p}X_p$ is degenerate. Then  $A_{\tilde \xi_p}$ leaves invariant the degeneracy subspace of 
$T_pS$. 
Analogously, a normal field $\eta (t)$ of $S$ along
a curve $c(t)$ is called {\it parallel} if $\frac {\mathrm{\, d}}{dt}\eta (t) \in T_{c(t)}S$. 
\end{rem}

\begin{cor}\label{cor:parallel-normal}  Let $G$ be a Lie group of  isometries of a pseudo-Riemannian manifold 
$(M, \langle, \, ,\, \rangle)$ which acts weakly polarly on $M$. Let $N=G\cdot p$ be a maximal dimensional orbit, with a (possible) degenerate induced metric. Then any $\xi \in \nu _pN$ extends, in a neighborhood $U$ of $p$ in $N$, to a parallel normal field $\tilde \xi$. 
\end{cor}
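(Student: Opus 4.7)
The plan is to extend $\xi$ by $G$-equivariance and then verify the parallel condition with the help of the Killing equation and the autoparallelism of $\mathcal H$. By Lemma~\ref{lem:slice}, the identity component $G_p^o$ of the isotropy group acts trivially on $\nu_pN$, so the rule $\tilde\xi(g\cdot p):=g_*\xi$ defines a smooth section of $\nu N$ on a neighborhood $U$ of $p$ in $N$, provided $g$ is restricted to a neighborhood of the identity in $G$; since each such $g$ is an isometry, the output lies in $\nu_{g\cdot p}N$.

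For the parallelism, fix $q\in U$ and $X\in T_qN$, and write $X=Z^*_q$ for some $Z\in\mathfrak g$, where $Z^*$ is the associated Killing field (possible because $T_qN=\mathcal V_q$ is spanned by the Killing fields of the action). Since $\tilde\xi$ is invariant under the flow of $Z^*$, the standard Killing-field identity yields
\[
\nabla_X \tilde\xi \;=\; \nabla_{\tilde\xi_q}Z^*.
\]
Because the ambient metric is non-degenerate, $\mathcal V_q=(\mathcal H_q)^\perp$; thus the required inclusion $\nabla_{\tilde\xi_q}Z^*\in T_qN$ is equivalent to $\langle\nabla_{\tilde\xi_q}Z^*,\eta\rangle=0$ for every $\eta\in\mathcal H_q$, and, by the skew-symmetry of $\nabla Z^*$, to
\[
\langle\nabla_\eta Z^*,\eta_1\rangle=0\quad\text{for all }\eta,\eta_1\in\mathcal H_q.
\]

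This last identity is where weak polarity enters. By Lemma~\ref{lem:intg}, $\mathcal H$ is autoparallel and its leaves are totally geodesic; hence the ambient geodesic $c(t)$ with $c'(0)=\eta$ stays in the leaf of $\mathcal H$ through $q$, and the parallel transport $\mu(t)$ of $\eta_1$ along $c$ satisfies $\mu(t)\in\mathcal H_{c(t)}$. Since $Z^*_{c(t)}\in\mathcal V_{c(t)}$ is orthogonal to $\mu(t)\in\mathcal H_{c(t)}$, the function $\langle Z^*_{c(t)},\mu(t)\rangle$ vanishes identically, and differentiating at $t=0$ (using $\nabla_{c'}\mu\equiv 0$) gives exactly $\langle\nabla_\eta Z^*,\eta_1\rangle=0$.

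The delicate point is the potential degeneracy of $N$: $T_qN$ and $\nu_qN$ may overlap, but the relation $\mathcal V_q=\mathcal H_q^\perp$ is preserved in the non-degenerate ambient, and the weakened notion of parallelism in Remark~\ref{rem:parallel-shape} requires only that $\nabla_X\tilde\xi$ be tangent to $N$---precisely what the computation above provides.
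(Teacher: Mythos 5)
Your proof is correct and takes essentially the same route as the paper: the $G$-equivariant extension justified by Lemma~\ref{lem:slice}, the vanishing bracket $[Z^*,\tilde\xi]=0$ along the orbit, and the autoparallelism of $\mathcal H$ supplied by weak polarity via Lemma~\ref{lem:intg}. The only cosmetic difference is that you establish $\langle\nabla_\eta Z^*,\eta_1\rangle=0$ for $\eta,\eta_1\in\mathcal H_q$ by a geodesic/parallel-transport argument inside the leaf of $\mathcal H$ and then apply the Killing skew-symmetry, whereas the paper extends $\tilde\xi$ to a section of $\mathcal H$ on $\Omega$ and differentiates $\langle X,\tilde\eta\rangle$ in the normal direction $\tilde\xi_p$.
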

\begin{proof}   From Lemma \ref{lem:slice} it follows that $\xi$ extends to a $G$-invariant section $\tilde \xi$ of $\nu N$ in a neighbourhood $U$ of $N$. Since the arguments are local, we may assume that $U=N$ is an embedded submanifold of $M$. Let $\mathcal H$ be the autoparallel distribution given by the normal spaces to the $G$-orbits, defined in a neighbourhood $\Omega$ of $p$ in $M$ (see Lemma \ref{lem:intg}). Without loss of generality we may assume that $N\subset \Omega$. Since $\tilde \xi$ is tangent to $\mathcal H$, this normal field extends to a  field of $\Omega$ that lies in $\mathcal H$ (perhaps by making $\Omega$ smaller). We denote such an extension also by $\tilde \xi$. 
Let $X$ be a Killing field of $M$ induced by $G$, and let $\phi _t$ be its associated flow. Since the normal field $\tilde \xi$ is $G$-invariant, then 
$\mathrm{d}\phi_t \, (\tilde \xi _p) = \tilde \xi _{\phi _t(p)}$. Hence, 
\begin{equation}\label{eq:bra2}[X,\tilde \xi]_p=0
\end{equation}
	
	 Let $\tilde \eta$ be a field of 
$\Omega$ that lies in $\mathcal H$. Since $X$ is tangent to the $G$-orbits, then 
$\langle X, \tilde \eta\rangle =0$. By differentiating this equality in the direction of $\tilde \xi$ we obtain that 
$$ \tilde \xi _p  \langle X , \tilde \eta \rangle  = \langle  X_p , \nabla _{\tilde \xi _p}\tilde \eta  \rangle + 
\langle  \nabla _{\tilde \xi _p}X, \tilde \eta _p \rangle = 0. $$
Since  $\mathcal H$ is autoparallel, and by making use of (\ref{eq:bra2}), we obtain that $\langle  \nabla _{X_p}\tilde \xi , \tilde \eta _p\rangle =0$. Since $X$ and $\tilde \eta$ are arbitrary, we conclude that $\nabla _{T_pN}\tilde \xi \subset T_pN$.
\end{proof}

The proof of the following result is standard. In the case of a non-degenerate submanifold it is a special case of  Ricci identity.

\begin{lemma}\label{lem:comm} Let $M$ be a possible degenerate submanifold of $\mathbb R^{r,s}$ and let $\xi, \eta$ be parallel normal fields of $M$ (see Remark \ref{rem:parallel-shape}). Then 
	$\displaystyle{\langle [A_\xi, A_\eta]X,Y\rangle =0}$, 
	for all fields $X,Y$ tangent to $M$. 
	\end{lemma}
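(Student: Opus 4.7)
The plan is to mimic the standard Ricci-identity derivation, exploiting the flatness of the ambient space $\mathbb{R}^{r,s}$, while sidestepping the fact that $M$ carries no normal connection and no well-defined tangent/normal splitting of $\nabla_X \xi$ when $TM\cap \nu M\neq 0$. The two hypotheses---that $\xi$ and $\eta$ are parallel normal fields in the sense of Remark~\ref{rem:parallel-shape}---provide exactly what is needed: both $\nabla_X \xi = -A_\xi X$ and $\nabla_X \eta = -A_\eta X$ are tangent to $M$ for every tangent field $X$, which is the only ingredient the argument will use.

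First I would write the ambient flatness identity $\nabla_X \nabla_Y \xi - \nabla_Y \nabla_X \xi - \nabla_{[X,Y]} \xi = 0$ and pair it with $\eta$. Since $\nabla_{[X,Y]}\xi = -A_\xi[X,Y]\in TM$ and $\eta\in (TM)^\perp$, the term $\langle \nabla_{[X,Y]}\xi,\eta\rangle$ drops out. For the same reason $\langle \nabla_Y \xi,\eta\rangle = -\langle A_\xi Y,\eta\rangle = 0$ and $\langle \nabla_X \xi,\eta\rangle=0$, so in the Leibniz expansion $\langle \nabla_X\nabla_Y\xi,\eta\rangle = X\langle \nabla_Y\xi,\eta\rangle - \langle \nabla_Y\xi,\nabla_X\eta\rangle$ only the cross term survives, and likewise for the $Y,X$ counterpart.

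What remains is $\langle \nabla_X \xi,\nabla_Y \eta\rangle - \langle \nabla_Y \xi,\nabla_X \eta\rangle = 0$, which upon substituting the shape-operator expressions becomes $\langle A_\xi X, A_\eta Y\rangle - \langle A_\xi Y, A_\eta X\rangle = 0$. Two applications of the symmetry $\langle A_\zeta U, V\rangle = \langle U, A_\zeta V\rangle$ recorded in Remark~\ref{rem:parallel-shape} rewrite this as $\langle A_\eta A_\xi X,Y\rangle - \langle A_\xi A_\eta X,Y\rangle = 0$, which is the desired identity $\langle [A_\xi, A_\eta]X,Y\rangle = 0$.

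I do not anticipate a serious obstacle. The key point, and the only place where one must be careful, is to avoid any attempt to split $\nabla_X \xi$ into tangential and normal components---a move that would be ill-defined in the degenerate case. Using the parallel-normal hypothesis directly, namely that $\nabla_X\xi$ is entirely tangent to $M$, makes the degenerate situation formally identical to the non-degenerate Ricci identity, which is presumably why the author classifies the proof as standard.
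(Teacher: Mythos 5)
Your argument is correct and is precisely the "standard" Ricci-identity computation the paper invokes without writing out: the paper's proof is omitted (declared standard), and your flattening of the ambient curvature identity, combined with the tangency of $\nabla_X\xi$, $\nabla_X\eta$ and the symmetry of the shape operators from Remark~\ref{rem:parallel-shape}, is exactly the intended route. The one point worth being careful about --- never decomposing $\nabla_X\xi$ into tangential and normal parts, which is ill-defined when $T_pM\cap\nu_pM\neq 0$ --- is handled correctly, since you only ever use that $\nabla_X\xi$ is tangent and that $\eta\in(TM)^{\perp}$.
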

\qed

\subsection{Holonomy tubes around a focal manifold} \label{subs:2.3aaa}

Let $M\subset \mathbb R^{r,s}$ be a local submanifold 
with a non-degenerate induced metric. Let $\tilde \xi$ be a parallel normal  field of $M$ and assume that $0<\dim \ker (Id -A_{\tilde \xi (x)})<\dim M$,  and that $\dim \ker (Id -A_{\tilde \xi (x)})$ is independent of $x\in M$, where $A$ is the shape operator of $M$.

\vspace{.15cm}

\underline{\it Assumptions}: The {\it vertical} distribution $\ker (Id -A_{\tilde \xi (x)})$ of $M$ is  pseudo-Riemannian and the {\it horizontal}  distribution $\mathcal H ^{\tilde \xi}:=(\ker (Id -A_{\tilde \xi (x)}))^\perp$ is Riemannian.

\vspace{.15cm}

By the Codazzi equation, 
$\ker(Id - A_{\tilde \xi (x)})$ defines an autoparallel distribution of $M$. Let us consider, locally,  the Riemannian parallel focal manifold $M_{\tilde{\xi}}=\{ x + \tilde \xi (x):x\in M\}$. If $\pi : M\to M_{\tilde \xi}$ is the projection, i.e. $\pi (x) = x +\tilde \xi (x)$, then 
$\ker {\mathbf {d}\pi} = \ker(Id - A_{\tilde {\xi} })$. Observe that $T_{\pi (x)}M_{\tilde \xi} =  (\ker {\mathbf {d}_x\pi})^\perp \subset T_xM$, as subspaces of the ambient space.
Moreover, any fiber $\pi^{-1}(\{\pi (x)\})$ is contained in the (affine) normal space $\pi (x) +\nu _{\pi (x)}M_{\tilde \xi}$, and   the $\nabla ^\perp$-parallel transport $\tau ^\perp _c$ along an arbitrary curve $c$ of $M_{\tilde \xi}$ from  $\pi (x)$ to $\pi (y)$ maps (locally) $\pi^{-1}(\{\pi (x)\})$ into $\pi^{-1}(\{\pi (y)\})$ (see \cite[Lemma 3.4.10]{BCO}).
 In particular, 
\begin{equation}\label{eq:832}
\pi (x) + \Phi (\pi (x))\cdot (x-\pi (x))\subset \pi^{-1}(\{\pi (x)\}) \text {\ \ \ (locally),} \end{equation}
 where $\Phi$ denotes the local normal holonomy group of $M_{\tilde \xi}$. We regard, in the obvious way, this parallel transport as a map from the affine normal spaces, i.e.  
 $\tau ^\perp _c: \pi (x) + \nu _{\pi (x)} M_{\tilde \xi}\to \pi (y) + \nu _{\pi (y)} M_{\tilde \xi}\subset \mathbb R^{r,s}$. If $v\in T_x\, \pi^{-1}(\{\pi (x)\})$, then $\mathrm{d}\tau ^\perp _c(v)$ is naturally identified with the linear parallel transport $\tau ^\perp _c(v)$. Any of these  possible interpretations of the normal parallel transport will be clear from the context.  

 One has that $M$ is (locally) foliated by the holonomy tubes (see \cite[p. 220]{BCO}) 
\begin{equation}\label{349}
H^{\tilde \xi}(x): = (M_{\tilde \xi})_{x-\pi (x)} = 
(M_{\tilde \xi})_{-\tilde \xi (x)}
\end{equation}
 By considering a smaller neighborhood of a nearby generic point, we may assume that all holonomy tubes have the same dimension, or equivalently,  that 
$ \dim (\Phi (\pi (x))\cdot (-\tilde \xi (x))$ does not depend on $x \in M$.  
The induced metric on $(M_{\tilde \xi})_{x-\pi (x)}$ may be degenerate at $x$. This occurs if and only if $\Phi (\pi (x))\cdot (-\tilde \xi (x))$ is a degenerate orbit. Additionally, we may assume that the dimension of the degeneracy of the induced metric on $H^ {\tilde \xi}(x)$ is constant. 

Let $\tilde \nu$ be the distribution of $M$ perpendicular  to the distribution $\mathcal T$ defined  by the tangent spaces of the holonomy tubes. 
When the holonomy tubes are degenerate, then 
$\tilde \nu$ and $\mathcal T$ have a non-trivial intersection. Let us consider the  distribution $\mathcal H^{\tilde \xi} = (\ker \mathbf {d}\pi )^\perp $ and 
observe that $(\mathcal H^{\tilde \xi}) _x = T_{\pi (x)}M_{\tilde \xi}$ (as linear subspaces of the ambient space). Moreover, 
\begin{equation}\label{eq:tanhol}
	\mathcal T _x := T_xH^{\tilde \xi}(x) = T_x\biggl(\pi (x) + \Phi (\pi (x))\cdot \bigl(x-\pi (x)\bigr)\biggr)\oplus (\mathcal H^{\tilde \xi}) _x
\end{equation}
Since $\tilde \xi$ is a parallel normal field of $M$,  by the Ricci identity,  the shape operator $A_{\tilde \xi}$ commutes with any other shape operator of $M$. Thus, $\ker \mathbf {d}\pi $ and $\mathcal H$   are distributions which are invariant under all shape operators of $M$. 
From the Codazzi identity, it follows that  the distribution $\ker \mathbf {d}\pi$ is autoparallel. Furthermore, from the construction of the holonomy tubes inside $M$, and by making use of  Theorem \ref{thm:NholPo}, the distribution $\tilde \nu$ is  autoparallel, and contained in $\ker \mathbf {d}\pi$.

Observe that the normal space $\nu _xM$ of $M$ at $x$ coincides with the normal space at $x$ of $\pi^{-1}(\{\pi (x)\})$, regarded as a submanifold of the affine normal space $\pi (x) +\nu _{\pi (x)}M_{\tilde \xi}$. Then, taking into account that $\ker \mathbf {d}\pi$ is invariant under all the shape operators of $M$  and Proposition  \ref{prop:Po}, we obtain the following results (keeping the assumptions and notation of this section).

\begin{lemma}\label{lem:743}  The distributions $\ker \mathbf {d}\pi $, $\mathcal H^{\tilde \xi}$, $\mathcal T$, and  $\tilde \nu$ are invariant under all shape operators of $M$. Moreover,  $\ker \mathbf {d}\pi$ and $\tilde \nu$ are autoparallel. \qed
\end{lemma}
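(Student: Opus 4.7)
The plan is to collect three ingredients already developed in the preceding discussion: the commutativity produced by the parallelism of $\tilde\xi$, the autoparallelism of the $1$-eigendistribution of $A_{\tilde\xi}$, and Proposition \ref{prop:Po} applied to the normal holonomy action of $M_{\tilde\xi}$ on the fiber.

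First I would establish the claims involving only $\ker\mathbf{d}\pi$ and $\mathcal{H}^{\tilde\xi}$. Because $\tilde\xi$ is a parallel normal field, $R^\perp_{X,Y}\tilde\xi=0$ for every pair of tangent vectors, so the Ricci identity gives $[A_{\tilde\xi},A_\eta]=0$ for every normal $\eta$. Hence the self-adjoint operator $A_{\tilde\xi}$ shares invariant subspaces with every other $A_\eta$; in particular the $1$-eigenspace $\ker\mathbf{d}\pi=\ker(\mathrm{Id}-A_{\tilde\xi})$ is $A_\eta$-invariant, and since it is non-degenerate by assumption, its orthogonal $\mathcal{H}^{\tilde\xi}$ is also $A_\eta$-invariant by self-adjointness. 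The autoparallelism of $\ker\mathbf{d}\pi$ is the standard consequence of Codazzi applied to a parallel normal section, combined with non-degeneracy of the two eigendistributions.

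Next I would pass to $\mathcal{T}$ and $\tilde\nu$ via Proposition \ref{prop:Po}. Since $M_{\tilde\xi}$ is Riemannian it is essentially Riemannian, so Lemma \ref{lem:unR} supplies a positive semi-definite algebraic curvature tensor $R$ on $\nu_{\pi(x)}M_{\tilde\xi}$ whose curvature endomorphisms span the Lie algebra of $\Phi(\pi(x))$. The fiber $N:=\pi^{-1}(\{\pi(x)\})$ sits inside the affine normal space $\pi(x)+\nu_{\pi(x)}M_{\tilde\xi}$ with tangent space $\ker\mathbf{d}\pi$; it is therefore non-degenerate by assumption and locally invariant under $\Phi(\pi(x))$. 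Proposition \ref{prop:Po}(ii) then asserts that the orbit-tangent direction $T_x(\Phi(\pi(x))\cdot(x-\pi(x)))$ is invariant under every shape operator of $N$, while Proposition \ref{prop:Po}(i) says the distribution normal to the $\Phi$-orbits inside $N$ is autoparallel in $N$. Because $\ker\mathbf{d}\pi$ is autoparallel in $M$, each fiber $N$ is totally geodesic in $M$, so both the shape operators of $N$ and the Levi-Civita connection of $N$ are the restrictions to $\ker\mathbf{d}\pi$ of the corresponding objects of $M$. Plugging these identifications into the decomposition $\mathcal{T}_x=T_x(\Phi(\pi(x))\cdot(x-\pi(x)))\oplus\mathcal{H}^{\tilde\xi}_x$ of (\ref{eq:tanhol}) yields shape-operator invariance of $\mathcal{T}$; the autoparallelism transferred from $N$ yields autoparallelism of $\tilde\nu$ in $M$; and shape-operator invariance of $\tilde\nu$ follows from self-adjointness of $A_\eta|_{\ker\mathbf{d}\pi}$ combined with invariance of the orbit-tangent direction, since within the non-degenerate $\ker\mathbf{d}\pi$ the distribution $\tilde\nu$ is the orthogonal complement of that direction.

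The main obstacle is the possible degeneracy of the holonomy orbit $\Phi(\pi(x))\cdot(-\tilde\xi(x))$: when it is degenerate, $\mathcal{T}$ and $\tilde\nu$ overlap in $TM$, so one cannot conclude invariance of $\tilde\nu$ from invariance of $\mathcal{T}$ by a naive orthogonal-complement argument inside $TM$. The device of working inside the non-degenerate $\ker\mathbf{d}\pi$ bypasses this exactly in the spirit for which Proposition \ref{prop:Po}—requiring only non-degeneracy of the ambient $N$, not of the orbits—was introduced.
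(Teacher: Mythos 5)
Your proof is correct and follows essentially the same route as the paper: Ricci and Codazzi for $\ker\mathbf{d}\pi$ and $\mathcal{H}^{\tilde\xi}$, then Lemma \ref{lem:unR} together with Proposition \ref{prop:Po} (i.e.\ Theorem \ref{thm:NholPo}) applied to the fiber $\pi^{-1}(\{\pi(x)\})$, whose normal space in the affine normal space of $M_{\tilde\xi}$ is $\nu_xM$, to handle $\mathcal{T}$ and $\tilde\nu$. Your closing observation — that one must take the orthogonal complement of the orbit-tangent direction inside the non-degenerate $\ker\mathbf{d}\pi$ rather than inside $TM$ — is exactly the point the paper's framework is designed to address.
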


\begin{cor}\label{cor:743}  Let $\tilde{\eta}$ be a parallel normal field of $M$. Then  $\tilde{\eta}_{\vert H^{\tilde \xi}(x)}$ is a parallel normal field of $H^{\tilde \xi}(x)$,  for all $x\in M$. In particular,  $\tilde{\xi}_{\vert H^{\tilde \xi}(x)}$ is a parallel normal field of $H^{\tilde \xi}(x)$. 
	\qed 
\end{cor}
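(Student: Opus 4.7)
My plan is to verify, directly from the definition of a parallel normal field in the (possibly) degenerate setting of Remark~\ref{rem:parallel-shape}, the two requirements that $\tilde\eta_{|H^{\tilde\xi}(x)}$ (i) is a section of $\nu H^{\tilde\xi}(x)$, and (ii) has ambient Levi-Civita derivative along every tangent direction still tangent to $H^{\tilde\xi}(x)$. Once both are established, the ``in particular'' clause is just the special case $\tilde\eta=\tilde\xi$.

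For (i), I would use the fact that the holonomy tubes locally foliate $M$, so each $H^{\tilde\xi}(x)$ sits inside $M$. Hence, at any $y\in H^{\tilde\xi}(x)$, we have $T_yH^{\tilde\xi}(x)\subset T_yM$, and taking orthogonal complements in the ambient $\mathbb R^{r,s}$ yields $\nu_yM\subset \nu_yH^{\tilde\xi}(x)$. Therefore $\tilde\eta(y)\in\nu_yM\subset\nu_yH^{\tilde\xi}(x)$, which is (i).

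For (ii), I would invoke that $M$ is non-degenerate and $\tilde\eta$ is parallel normal on $M$, so the Weingarten formula (with $\nabla^{\perp}_X\tilde\eta=0$) gives $\nabla_X\tilde\eta=-A^M_{\tilde\eta}X$ for any field $X$ tangent to $M$. If in addition $X$ is tangent to $H^{\tilde\xi}(x)$, i.e. $X\in\mathcal T$, then Lemma~\ref{lem:743} says that $\mathcal T$ is invariant under every shape operator of $M$; in particular $A^M_{\tilde\eta}X\in\mathcal T_y=T_yH^{\tilde\xi}(x)$, so $\nabla_X\tilde\eta$ is tangent to the tube, which is (ii).

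I do not foresee a serious obstacle here: the real content of the corollary has already been absorbed into the shape-operator invariance of $\mathcal T$ established in Lemma~\ref{lem:743}, which in turn rests on the weakly polar action framework of Theorem~\ref{thm:NholPo}. The only subtle point worth flagging is that $H^{\tilde\xi}(x)$ may be degenerate, which is precisely why the statement must be phrased with the ``parallel normal field'' notion of Remark~\ref{rem:parallel-shape} rather than with a genuine normal connection; but the verification above is formally identical in the degenerate and non-degenerate cases and never requires the induced metric on the tube to be non-degenerate.
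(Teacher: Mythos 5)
Your proposal is correct and follows essentially the same route as the paper: the corollary is stated there with an immediate \qed precisely because it reduces to the shape-operator invariance of $\mathcal T$ from Lemma~\ref{lem:743} combined with the Weingarten formula and the degenerate-case notion of parallel normal field from Remark~\ref{rem:parallel-shape}, exactly as you argue. Your explicit verification of the two conditions (normality of the restriction, and tangency of $\nabla_X\tilde\eta=-A^M_{\tilde\eta}X$ to the tube) is the intended argument.
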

\noindent (The definition of a parallel normal field, if $H^{\tilde \xi}$ is degenerate, is given by  Remark \ref{rem:parallel-shape}).

\vspace{.15cm}

Let  $c(t)$ be a  horizontal curve in 
$H^{\tilde \xi}(x)$ and let $\eta (t)$ be a normal filed of $H^{\tilde \xi}(x)$ along $c(t)$. Then  it is standard to show, and well-known in a Euclidean ambient space by an argument that goes back to \cite{HOT}, 
that $\eta (t)$ is  a parallel normal field along $c(t)$ if and only if $\eta (t)$ is a parallel normal field of 
 $M_{\tilde \xi}$ along the curve 
$\pi (c(t))$. 

\begin{rem}\label{rem:dsjk}
    The distribution of $M$ tangent to the normal holonomy orbits of the focal manifold is given by $ \mathcal T \cap \ker \mathbf {d}\pi$; see
    (\ref{eq:tanhol}).
\end{rem}

\begin{lemma}\label{lem:744}  Let $\psi\in \tilde{\nu}_x$. Then  $\psi$ extends (locally) to a section $\tilde \psi$ of 
$\tilde \nu _{\vert H^{\tilde \xi}(x)}$, which is a parallel normal field of $H^{\tilde \xi}(x)$. Moreover, the shape operator 
$\hat A_ {\tilde \psi}$  of  $H^{\tilde \xi}(x)$ leaves invariant the horizontal distribution  $\mathcal H^{\tilde \xi}$.
\end{lemma}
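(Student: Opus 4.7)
The plan is to construct $\tilde{\psi}$ by extending $\psi$ separately in the vertical (orbit) and horizontal directions on the holonomy tube, and then verify that the result is a section of $\tilde{\nu}$ that is parallel as a normal field of $H^{\tilde\xi}(x)$ and whose shape operator preserves $\mathcal H^{\tilde\xi}$.

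First I would note the ambient identifications: since $\tilde\nu\subset\ker\mathbf{d}\pi$ and the fiber $\pi^{-1}(\pi(x))$ lies in $\pi(x)+\nu_{\pi(x)}M_{\tilde\xi}$, we have $\tilde\nu_x\subset\ker\mathbf{d}\pi_x\subset V$, where $V:=\nu_{\pi(x)}M_{\tilde\xi}$. The normal holonomy orbit $\mathcal O:=\Phi(\pi(x))\cdot(-\tilde\xi(x))\subset V$ has tangent $\mathcal V_x$ at $-\tilde\xi(x)$, and by definition of $\tilde\nu$ the vector $\psi$ is orthogonal to $\mathcal V_x$ inside $V$. Thus $\psi\in\nu_{-\tilde\xi(x)}\mathcal O$ inside the non-degenerate flat space $V$. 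Because $M_{\tilde\xi}$ is Riemannian (hence essentially Riemannian), Theorem~\ref{thm:NholPo} gives that $\Phi(\pi(x))$ acts weakly polarly on $V$, and the orbit $\mathcal O$ is, by our standing assumption, of maximal dimension. Corollary~\ref{cor:parallel-normal} then provides a $\Phi(\pi(x))$-equivariant extension $\hat\psi$ of $\psi$ to a parallel normal field of $\mathcal O$.

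Next I would propagate $\hat\psi$ across fibers. For $y\in H^{\tilde\xi}(x)$ with $\pi(y)=p'$, choose a horizontal curve $c$ in $H^{\tilde\xi}(x)$ from $x$ to $y$ and set $\tilde\psi_y:=\tau_c(\hat\psi_{\tau_c^{-1}(y-p')})$, where $\tau_c$ is the $\nabla^\perp$-parallel transport of $\nu M_{\tilde\xi}$ along $\pi\circ c$. Well-definedness of $\tilde\psi_y$ rests on two facts: (i) $\tau_c$ intertwines the normal holonomy representations at $\pi(x)$ and $p'$, carrying $\mathcal O$ to the corresponding orbit at $p'$; and (ii) the $\Phi$-equivariance of $\hat\psi$. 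The HOT correspondence recalled before Remark~\ref{rem:dsjk} identifies $\nabla^\perp$-transport of $\nu M_{\tilde\xi}$ along $\pi\circ c$ with normal parallel transport of $H^{\tilde\xi}(x)$ along $c$, so the construction is intrinsic to the tube. To see that $\tilde\psi_y\in\tilde\nu_y$ and not merely in the ambient normal bundle of $H^{\tilde\xi}(x)$, one combines the autoparallel property of $\ker\mathbf{d}\pi$ in $M$ (Lemma~\ref{lem:743}) with the equivariance of $\hat\psi$: the values stay in $\ker\mathbf{d}\pi$ along the fiber, and orthogonality to $\mathcal V$ is preserved by $\tau_c$ and by $\hat\psi\in\nu\mathcal O$.

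Finally, I would verify the two remaining claims. A tangent field $X$ of $H^{\tilde\xi}(x)$ decomposes as $X=X^{\mathcal V}+X^{\mathcal H}$, and $\tilde{\psi}$ must be shown parallel in the sense of Remark~\ref{rem:parallel-shape}. If $X\in\mathcal H^{\tilde\xi}$, then $\tilde\psi$ is $\nabla^\perp$-parallel in $\nu M_{\tilde\xi}$ along $\pi\circ c$, so the ambient derivative $\nabla_X\tilde\psi$ lies in $TM_{\tilde\xi}=\mathcal H^{\tilde\xi}\subset TH^{\tilde\xi}(x)$. If $X\in\mathcal V$, then $\tilde\psi$ restricted to the orbit equals $\hat\psi$, which is parallel in $V$, so $\nabla_X\tilde\psi$ is tangent to $\mathcal O$, hence to $TH^{\tilde\xi}(x)$. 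Thus $\tilde\psi$ is a parallel normal field of the (possibly degenerate) tube. The horizontal invariance of the shape operator is then immediate: for $X\in\mathcal H^{\tilde\xi}$, $\hat A_{\tilde\psi}X=-\nabla_X\tilde\psi\in\mathcal H^{\tilde\xi}$.

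The main obstacle is the verification that $\tilde\psi_y$ actually takes values in $\tilde\nu_y$ rather than only in the ambient normal bundle of $H^{\tilde\xi}(x)$ in $\mathbb R^{r,s}$. This is exactly the point where the weakly polar/equivariant output of Corollary~\ref{cor:parallel-normal} must be matched with the autoparallel geometry of $\ker\mathbf{d}\pi$ and $\tilde\nu$ from Lemma~\ref{lem:743}; once this matching is established, the parallelism and the shape-operator statement follow from the horizontal/vertical split above.
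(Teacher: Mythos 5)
Your proposal is correct and follows essentially the same route as the paper's proof: extend $\psi$ over the (possibly degenerate) normal holonomy orbit via Corollary~\ref{cor:parallel-normal}, propagate by $\nabla^\perp$-parallel transport of $M_{\tilde\xi}$ along projections of horizontal curves, and read off parallelism and the invariance of $\mathcal H^{\tilde\xi}$ from the construction. You spell out the well-definedness and the fact that the values land in $\tilde\nu$ in somewhat more detail than the paper, which delegates these points to Corollary~\ref{cor:parallel-normal} and to the Euclidean argument of Proposition 7.1.1(iii) in \cite{BCO}, but the underlying argument is the same.
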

\begin{proof}
	 If $H^{\tilde \xi}(x)$ is non-degenerate,  the proof follows analogous arguments to those in part (iii) of Proposition 7.1.1 of \cite{BCO}. In the degenerate case the arguments are similar, after applying  Corollary \ref{cor:parallel-normal} to construct a parallel normal field of the degenerate  normal holonomy orbit $\pi (x) + \Phi (\pi (x))\cdot (x-\pi (x))$ of the focal manifold $M_{\tilde \xi}$. In fact, let  $y\in H^{\tilde \xi}(x)$, and let $c: [0,1] \to H^{\tilde \xi}(x)$ be a horizontal curve with $c(0) =x$, $c(1)= y$. Let $\bar{\psi} (t)$ be the  parallel normal field along the curve $\pi (c(t))$ of the Riemannian manifold $M_{\tilde \xi}$ with $\bar{\psi} (0)= \psi$. Then $\bar{\psi} (t)$ is a parallel normal field of $H^{\tilde \xi}(x)$ along the curve $c(t)$ (see Remark \ref{rem:parallel-shape}). We define $\tilde \psi (y) = \bar{\psi} (1)$. From 
	Corollary \ref{cor:parallel-normal} one obtains that $\tilde \psi$ is well defined (near $x$), defines a parallel normal field of 
	$H^{\tilde \xi}(x)$. The last assertion follows from the construction of $\tilde \psi$
\end{proof}

\begin{rem}\label{rem:eq-class} Let us define on $M$ the following equivalence class:
	 $x\underset{\tilde \xi}{\sim}y$ if there is a curve
	in $M$ from $x$ to $y$ which lies in the horizontal distribution $\mathcal H$. Let $[x]$ denote the equivalence class of $x$. Then, locally, 
	$$H^{\tilde \xi}(x) = [x]$$
	(see last paragraph of \cite[p. 224]{BCO}).
\end{rem}

	\subsection{The horosphere embedding} \label{sec:horosphere}

	Let $\mathbb R ^{r,s}$ be the pseudo-Euclidean space $\mathbb R ^{r+s}$ with signature $s$ where the inner product is given by $\langle v, v\rangle = -v_1^2 - \cdots -v_s^2 + v_{s+1}^2 + \cdots  + v_{s+r}^2$. 
	
	The {\it horosphere  embedding} is the isometric map $f:\mathbb R ^{r,s}\to \mathbb R ^{r+1,s+1} \simeq \mathbb R ^{1,1}\times \mathbb R ^{r,s}$ given by 
	\begin{equation}\label{eq:def}  
		f(x) = (\frac 12 \langle x,x\rangle +1, \frac 12 \langle x,x\rangle, x)
	\end{equation}
	Then $Q^{r,s} : = f(\mathbb R ^{r,s})$ is called  the {\it pseudo-horosphere} of the pseudo-hyperbolic space 
	\begin{equation} \label{eq:pHip} H^ {r+1,s} = \{ v \in \mathbb R ^{r+1,s+1}: \langle v,v\rangle=-1 \}^o
	\end{equation}
	where $e_{-1}, e_0, \cdots , e_{r+s}$ is the canonical basis of $\mathbb R ^{1,1}\times \mathbb R ^{r,s}$ and
	$\{\, \}^o$ denotes the connected component by $e_{-1}$ (we will frequently write $Q$ instead of $Q^{r,s}$).  Namely, 
	\begin{equation}\label{eq:horo}
		Q= H^ {r+1,s}\cap E
	\end{equation}
	where $E$ is the degenerate affine subspace of $\mathbb R ^{1,1}\times \mathbb R ^{r,s}$ given by the equation $x_{-1} - x_0 = 1$. One has that $f: \mathbb R ^{r,s}\to 
	Q$ is an isometric diffeomorphism and the map $f: \mathbb R ^{r,s}\to \mathbb R ^{r+1,s+1}$ is an isometric $\rho$-equivariant embedding, is a Lie group morphism from the isometry group of  $\mathbb R ^{r,s}$ into the orthogonal group $O(r+1,s+1)$.
 In fact, let $g\in O(r,s)$ 
	 and let $\tau _v$ be the translation by $v\in \mathbb R ^{r,s}$ in $\mathbb R ^{r,s}$. Then $\rho (g)$ is given by  the natural   inclusion of $O(r,s)\subset O(r+1,s+1)$, where 
	 $\mathbb R ^{r,s}\subset \mathbb R ^{1,1} \times \mathbb R ^{r,s}=
	\mathbb R ^{r+1,s+1}$. Moreover,  
	\begin{align} \label{eq:equiv} 
		&\rho (\tau _v)(x_{-1}, x_0, x) = \\ \nonumber &
		(x_{-1} +\langle x, v\rangle + \frac 12 (x_{-1} -x_0)\langle v, v\rangle, 
		x_{0} +\langle x, v\rangle + \frac 12 (x_{-1} -x_0)\langle v,v\rangle, 
		x + (x_{-1} -x_0)v)
	\end{align}

	One has that $Q^{r,s}\simeq \mathbb R^{r,s}$ is a pseudo-Riemannian flat manifold of signature $s$. 
	
	If $c(t)= (c_{-1}(t), c_0(t), \cdots , c_{r+s}(t))$ is a curve in 
	$Q$, then $c_{-1}(t)- c_0(t)=1$ and hence, differentiating, 
	$0 = c_{-1}'(t) - c_{0}'(t) = \langle -e_{-1} + e_0, c'(t)\rangle  $. Then $\xi ^0= -e_{-1} + e_0$ is a constant $\nabla ^\perp$-parallel  normal vector field to $Q$. Moreover, if $A$ is the shape operator of $Q\hookrightarrow \mathbb R ^{r+1,s+1}$, then $A_{\xi ^0 }= 0$.
	
	The position vector field $\xi ^1$ of $H^ {r+1,s} \subset \mathbb R^{r+1,s+1}$ is an umbilical parallel normal field. Namely, 
	$A'_{\xi ^1}= - \mathrm{Id}$, where $A'$ is the shape operator of 
	$H^ {r+1,s}$. Thus, the restriction of $\xi ^1$ to $Q$ is also a parallel normal field and $A_{\xi ^1}= - \mathrm{Id}$, where $A$ is the shape operator of the horosphere. 
	Then the  normal space $\nu Q$ of $Q$ in $\mathbb R ^{r+1,s+1}$ is generated by the parallel independent normal fields $\xi^0,\xi^1$, which are umbilical. Let  $i:M\to Q\simeq \mathbb R^{r,s}$ be an isometric immersion and let  $\nu M$ be  the normal bundle of $M$. Then the normal bundle of $M$, regarded as a submanifold of $\mathbb R^{r+1,s+1}$,  decomposes orthogonally as 
	\begin{equation}\label{eq:1}
		\bar \nu M = i^*(\nu Q) \oplus \nu M
	\end{equation}
	where $i^*(\nu Q)$ is the pull-back bundle, which is a parallel, flat and umbilical sub-bundle of $\bar \nu M$.
	
	\vspace{.15cm}
	
	\begin{rem}
Since the pseudo-horosphere \( Q^{r,s} \) is umbilical, it follows that, via the horosphere embedding, it is an essentially Riemannian submanifold of \( \mathbb{R}^{r+1,s+1} \).
\end{rem}

The proof of the following lemma is the same as that for Euclidean submanifolds when dealing with the zero distribution associated to  the kernel of the shape operator of a parallel normal field (see \cite[Section 7.1]{BCO}).
\begin{lemma}\label{lem:horo643} Let $M$ be a local  pseudo-Riemannian submanifold and let $\tilde \eta$ be a parallel normal field such that the kernel of the shape operator $A_{\tilde \eta}$ has constant dimension. We identify, by means of the horosphere embedding, $M$ with its image $\tilde M$ under the horosphere embedding and $\tilde \eta$ with a parallel normal  field of $\tilde M$ (tangent to the horosphere). Let $\tilde v$ be the position (parallel normal) field of $\tilde M$. Then 
	$\ker A_{\tilde{\eta}} = \ker (Id - \tilde A_{\tilde \eta -\tilde v})$, where $\tilde A$ is the shape operator of $\tilde M$. Thus, $\ker A_{\tilde{\eta}}$  is the vertical distribution associated to the projection $pr: \tilde M \to 
	\tilde M_{\tilde \eta -\tilde v}$, $pr(x)= x + \tilde{\eta} (x) -\tilde v(x) = \tilde{\eta} (x)$.
\end{lemma}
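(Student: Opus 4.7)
The plan is to reduce the asserted equality $\ker A_{\tilde\eta} = \ker(\mathrm{Id} - \tilde A_{\tilde\eta - \tilde v})$ to the single shape-operator identity
\[
\tilde A_{\tilde\eta - \tilde v} \;=\; \tilde A_{\tilde\eta} - \tilde A_{\tilde v} \;=\; A_{\tilde\eta} + \mathrm{Id},
\]
where $\tilde A_{\tilde v} = -\mathrm{Id}$ is umbilicity of the position vector of $H^{r+1,s}$, and $\tilde A_{\tilde\eta} = A_{\tilde\eta}$ expresses that the shape operator of $\tilde M$ in the ambient $\mathbb R^{r+1,s+1}$ in a direction tangent to $Q$ coincides with the shape operator of $\tilde M$ inside $Q$. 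Once this is available, $\mathrm{Id} - \tilde A_{\tilde\eta - \tilde v} = -A_{\tilde\eta}$ and the kernel equality is immediate. The second assertion then follows because $pr(x) = x + (\tilde\eta(x) - \tilde v(x)) = \tilde\eta(x)$, and by the general theory of parallel focal manifolds recalled in \S\ref{subs:2.3aaa}, $\ker d\, pr = \ker(\mathrm{Id} - \tilde A_{\tilde\eta - \tilde v})$.

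\medskip

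To make this rigorous I first need both $\tilde v$ and $\tilde\eta$ to be honest parallel normal fields of $\tilde M$ in $\mathbb R^{r+1,s+1}$. For $\tilde v = \xi^1|_{\tilde M}$ this is already recorded in \S\ref{sec:horosphere}: $\xi^1$ is the umbilical parallel normal field of $H^{r+1,s}$ with shape operator $-\mathrm{Id}$, and restricting to $\tilde M \subset Q \subset H^{r+1,s}$ preserves both properties. For $\tilde\eta$, parallel by hypothesis as a normal field of $\tilde M$ inside $Q$, I would apply the Gauss formula for the pair $\tilde M \subset Q \subset \mathbb R^{r+1,s+1}$, writing
\[
\nabla_X \tilde\eta \;=\; -A_{\tilde\eta} X + \nabla^{\perp,Q}_X \tilde\eta + \alpha^Q(X, \tilde\eta),
\]
and observe that $\alpha^Q(X, \tilde\eta) = 0$ for every $X \in T\tilde M$. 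This vanishing is a consequence of the umbilicity of $Q$ in $\mathbb R^{r+1,s+1}$: since each shape operator $A_\xi$, $\xi \in \nu Q$, is a scalar multiple of $\mathrm{Id}$, every pairing $\langle \alpha^Q(X, \tilde\eta), \xi\rangle = \langle A_\xi X, \tilde\eta\rangle$ is proportional to $\langle X, \tilde\eta\rangle = 0$, and the non-degeneracy of $\nu Q$ then forces $\alpha^Q(X, \tilde\eta) = 0$. Combined with the hypothesis $\nabla^{\perp,Q} \tilde\eta = 0$, this gives $\nabla_X \tilde\eta = -A_{\tilde\eta} X \in T\tilde M$, proving simultaneously that $\tilde\eta$ is parallel in the ambient $\mathbb R^{r+1,s+1}$ and that its ambient shape operator equals $A_{\tilde\eta}$.

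\medskip

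Substituting $\tilde A_{\tilde v} = -\mathrm{Id}$ and $\tilde A_{\tilde\eta} = A_{\tilde\eta}$ into the displayed identity for $\tilde A_{\tilde\eta - \tilde v}$ yields both conclusions of the lemma. No serious obstacle arises; the only delicate point is keeping track of the three connections in play (the flat ambient connection on $\mathbb R^{r+1,s+1}$, the Levi-Civita connection of $Q$, and the normal connection of $\tilde M$ inside $Q$). The crucial geometric input is the umbilicity of the horosphere inside $\mathbb R^{r+1,s+1}$, which ensures that passing from $Q$ to the ambient space neither alters the shape operator of $\tilde M$ in the direction $\tilde\eta$ nor destroys its parallelism, so that the shift by the position vector cleanly converts $\ker A_{\tilde\eta}$ into the $1$-eigenspace of $\tilde A_{\tilde\eta - \tilde v}$.
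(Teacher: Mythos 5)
Your proof is correct and follows essentially the same route the paper intends: the paper simply refers the reader to the standard Euclidean argument of \cite[Section 7.1]{BCO}, which is exactly the computation you carry out — the position field is a parallel umbilical normal with $\tilde A_{\tilde v}=-\mathrm{Id}$, the umbilicity and non-degeneracy of $\nu Q$ give $\alpha^Q(X,\tilde\eta)=0$ so that $\tilde\eta$ stays parallel with unchanged shape operator, whence $\mathrm{Id}-\tilde A_{\tilde\eta-\tilde v}=-A_{\tilde\eta}$ and $\ker \mathrm{d}\,pr=\ker(\mathrm{Id}-\tilde A_{\tilde\eta-\tilde v})$. No gaps; the details you supply (linearity of the shape operator in the normal direction and the three-connection bookkeeping) are exactly the ones the paper leaves implicit.
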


\section {The lift of a K\"ahler submanifold of  $\mathbb CH^n$ to $\mathbb C^{n,1}$}  

	Let $\mathbb C^{n,1}$ be the complex space $\mathbb C^{n+1}$ endowed with the pseudo-Hermitian inner product $\langle \, , \,\rangle ^H$ given by 
	\begin{equation} \langle (z_0,z_1, \cdots , z_n), (z'_0,z'_1, \cdots , z'_n)\rangle ^H
		= -z_0\bar z'_0 + z_1\bar z'_1 + \cdots + z_n\bar z'_n.
	\end{equation}
	The induced (real) inner product, i.e.\, the real part of the pseudo-Hermitian inner product will be denoted by 
	$\langle \, , \,\rangle$.
	Let $\langle\langle \, , \, \rangle\rangle ^H$ be the canonical Hermitian inner product of $\mathbb C^{n+1}$. This Hermitian inner product induces the canonical inner product of $\mathbb C^{n+1} \simeq \mathbb R^{2n+2}$ which will be denoted by   $\langle\langle \, , \, \rangle\rangle$. 
	
	Observe that 
	$\langle \, ,\, \rangle$ and $\langle \langle \, ,\, \rangle\rangle$ naturally induce on $\mathbb C^{n+1}$ flat pseudo-Riemannian and  Riemannian metrics, respectively. Such metric tensors will be also denoted 
	$\langle \, ,\, \rangle$ and $\langle \langle \, ,\, \rangle\rangle$, respectively. 
	Nevertheless, the associated Levi-Civita connections coincide. In fact, it is the usual connection $\nabla$ of a vector space. 
	The K\"ahler structure $J$ of $\mathbb C^{n+1}$ is also a pseudo-K\"ahler structure of $\mathbb C^{n,1}$

	Observe that $\mathbb C^{n,1}$, regarded as a real pseudo-Euclidean space, has signature $2$ and thus $\mathbb C^{n,1}\simeq \mathbb R^{2n,2}$. 
	Let 
	\begin{equation}\label{eq:neg} \mathbb C^{n,1}_{-} =\{z\in \mathbb C^{n,1}:
		\langle z,z\rangle <0\}
	\end{equation}
	which is an open subset of $\mathbb C^{n,1}\simeq \mathbb C^{n+1}$.
	Observe that  
	$\lambda \mathbb C^{n,1}_{-}= \mathbb C^{n,1}_{-}$, for any  $\lambda \in \mathbb C^*= \mathbb C-\{0\}$. 
	
	The  complex hyperbolic space  $\mathbb CH^ n$   is the projectivized space of $\mathbb C^{n,1}_{-}$. Moreover, it is the symmetric dual space of the complex projective space $\mathbb CP^n$.  The symmetric presentation is 
	$$\mathbb CH^ n = \mathrm{SU}_{n,1}/\mathrm {S}(\mathrm U_1\mathrm U_n),$$ where the group $\mathrm{SU}_{n,1}$ is the group of complex linear transformations of $\mathbb C^{n+1}$ that preserve $\langle \, , \,\rangle$. 
	The Riemannian metric on $\mathbb CH^ n = \mathrm{SU}_{n,1}/\mathrm {S}(\mathrm U_1\mathrm U_n)$  , up to a scaling, is unique and has constant and  negative holomorphic curvature. We choose such a Riemannian metric to have holomorphic curvature equal to  $-4$. 
	Observe that $\mathbb CH^ n$ may be regarded as an open subset of $\mathbb CP^n$; see (\ref{eq:neg}). But the symmetric Riemannian metric is different.
	
	Let $\pi :  \mathbb C^{n,1}_{-} \to \mathbb CH^ n$ be the projection.  
	Then $\pi$ is a submersion and 
	\begin{equation}\label{eq:dpi}
		\ker (\mathrm d\pi)_q= T_q(\mathbb C^*q)\simeq \mathbb Cq 
	\end{equation}
	\begin{defi}\label{def:lift} The lift $N$ of a submanifold $\bar N$ of $\mathbb CH^n$, to a submanifold of 
		$\mathbb C^{n,1}$, is $N = h(\pi ^{-1}(\bar N))$, where $h: \mathbb C^{n,1}_{-} \to  \mathbb C^{n,1}$ is the inclusion.
	\end{defi}
    
Such a lift, when restricted to the anti--de Sitter space, has been considered in \cite{DDS} with the aim of classifying isoparametric hypersurfaces in the complex projective space.

\vspace{.2cm}
    
	One has, from (\ref{eq:dpi}), that the lift of a submanifold of  $\mathbb CH^n$ is a non-degenerate submanifold of $\mathbb C^{n,1}$ with signature $2$. Moreover, $\pi: \mathbb C^{n,1}_- \to \mathbb CH^n$ is a fibration with fibers 
	$\pi ^{-1}(\{\pi (q)\})= \mathbb C^*q$,  
	where $\mathbb C^*= \mathbb C-\{0\}$. 
	Let  $\mathcal V$ be the vertical distribution of $\mathbb C^{n,1}_-$; i.e
	tangent to the fibers of $\pi$. One has that $\mathcal V_q = \mathbb Cq$, regarded as a subspace of $T_q\mathbb C^{n,1}$.
	Observe that, for all $q\in  \mathbb C^{n,1}_-$, 
	$\mathcal V_q$ is a negative $2$-dimensional (real) subspace of $T_q\mathbb{C}^{n,1}$. Then the perpendicular distribution, the so-called horizontal distribution,  $\mathcal H :=\mathcal V^\perp$ is a Riemannian distribution. 
	
	Let us consider the (real) pseudo-hyperbolic space 
	\begin{equation}\label{eq:pHip2}
		H^{2n,1}_r = \{v\in \mathbb C^{n,1}: \langle v , v\rangle =-r^ 2\}\subset \mathbb C^{n,1}_-
	\end{equation}
	of constant negative curvature $-1/r^2$, $r>0$; cf. (\ref{eq:pHip}). 
	Observe that for any $r>0$,   $\pi: 	H^{2n,1}_{r} \to \mathbb CH^n$ is a submersion. Moreover, it is a fibration with non-degenerate negative definite fibers  $S^1\cdot q$, where $S^1$ here denotes the unit complex  numbers. 
	The vertical distribution at $q$ is given by  $\mathcal V_q\cap T_q	H_r^{2n,1} = Jq$. The horizontal distribution is just the restriction of $\mathcal H$ to 
	$H_r^{2n,1}$. 
	It is well-known that 
	$\pi _{\vert H^{2n,1}_r}$ is a pseudo-Riemannian submersion of factor 
	$1/r$, i.e. 
	$\mathrm d _q \pi : \mathcal H_q  \to T_{\pi(q)}\mathbb CH^ n$ is a homothety of factor $1/r$. Namely, 
	$$\langle \mathrm d_q\pi (u), \mathrm d_q\pi (u)\rangle = r^{-2} 
	\langle u ,u\rangle .$$
	
	The distributions $\mathcal V$ and $\mathcal H$ of $\mathbb C^{n,1}_-$ are both $J$-invariant.  
	Moreover, if $\bar J$ is the K\"ahler structure of 
	$\mathbb CH^n$, one has that $\mathrm d\pi (Jv) = 
	\bar J  \mathrm d\pi (v)$, for all 
	$v\in T\mathbb C^{n,1}_-$. This implies that   $N=\pi^{-1}(\bar N)$ is a pseudo-K\"ahler submanifold of $\mathbb C^{n,1}$ if and only if $\bar N$ is a K\"ahler submanifold of $\mathbb CH^n$.
	
	Recall that a submanifold of a Riemannian manifold is called {\it full} if it is not contained in a proper totally geodesic submanifold of the ambient space. 
	
	\vspace{.15cm}

	 \begin{rem}\label{rem:full7} If  $X$ is  either $\mathbb CP^n$ or $\mathbb CH^n$, then any totally geodesic submanifold of $X$ is  complex or totally real. 
		Assume that  a Kähler submanifold $\bar N$  
 is  contained in a  totally geodesic  submanifold $\bar\Sigma$ of $X$.
		Then $\bar\Sigma$ is Kähler. 
	\end{rem}
	
	If $N$ is a submanifold of a real vector space $\mathbb V$ and $q\in N$, then the affine subspace generated by the set $N$ coincides with $q + \mathbb W$, where $\mathbb W$ is the (real) linear subspace generated by all the tangent spaces of $N$. If $\mathbb V$ is complex and $N$ is K\"ahler then any tangent space is complex and so $\mathbb W$ is complex.  
	Let now $N  \subset  \mathbb C^{n,1}_-\subset \mathbb V: =\mathbb C^{n,1}$ be the lift of a K\"ahler submanifold of $\mathbb CH^n$. Then, for any  given $q\in N$, $\mathbb C^*q \subset q + \mathbb W$ and so the limit point $0$ belongs to the affine subspace generated by $N$. Then $q + \mathbb W = \mathbb W$ and $\mathbb W$ is a complex subspace of $\mathbb C^{n+1}$.
	Since $\mathbb C q \subset T_qN$ and $\mathbb C q$ is a negative definite complex line, we obtain that 
	the signature of $\mathbb W$ is $2$.
	
	\vspace{.15cm}
	
	It is well-known, and standard to proof, that  $\bar \Sigma$ is a totally geodesic K\"ahler submanifold of $\mathbb C H^n$ if and only if its lift $\Sigma$ is the intersection of a complex subspace $\mathbb W$ of signature $2$ with $\mathbb{C}^{n,1}_-$. 
	Then, the previous discussion and  Remark \ref{rem:full7} imply:
	
	\begin{prop}\label{prop:fullif}
		A submanifold $\bar N$ of $\mathbb CH^n$ is full if and only if its lift $N$ is a full submanifold of $\mathbb C^{n,1}$.
	\end{prop}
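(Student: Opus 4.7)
The plan is to observe that for both $\mathbb{C}^{n,1}$ and $\mathbb{C}H^{n}$, the ``proper totally geodesic submanifold'' that witnesses non-fullness can be upgraded to a complex linear subspace of signature $2$, so the correspondence recalled immediately before the proposition provides a bijection between the two situations. I will use two facts already established in the excerpt: first, that totally geodesic K\"ahler submanifolds of $\mathbb{C}H^{n}$ are precisely the images under $\pi$ of $\mathbb{W}\cap \mathbb{C}^{n,1}_{-}$ for complex subspaces $\mathbb{W}\subset \mathbb{C}^{n+1}$ of signature $2$; second, that the affine subspace generated by the lift $N$ is itself a complex \emph{linear} subspace $\mathbb{W}(N)$ of signature $2$, passing through $0$ because $0$ lies in the closure of every fiber $\mathbb{C}^{*}q\subset N$.

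For the forward direction, suppose $N$ is not full, i.e.\ it is contained in some proper totally geodesic (hence affine, by flatness of $\mathbb{C}^{n,1}$) subspace. Then the affine hull $\mathbb{W}(N)$ is itself proper, and by the second fact above it is a proper complex subspace of signature $2$. By the first fact, $\bar{\Sigma}:=\pi(\mathbb{W}(N)\cap \mathbb{C}^{n,1}_{-})$ is a proper totally geodesic K\"ahler submanifold of $\mathbb{C}H^{n}$ containing $\bar{N}=\pi(N)$, so $\bar{N}$ is not full.

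For the converse, suppose $\bar{N}$ is not full, so $\bar{N}\subset \bar{\Sigma}$ for some proper totally geodesic submanifold $\bar{\Sigma}$ of $\mathbb{C}H^{n}$. By Remark~\ref{rem:full7}, $\bar{\Sigma}$ is K\"ahler, since it contains the K\"ahler submanifold $\bar{N}$. By the first fact, the lift of $\bar{\Sigma}$ has the form $\mathbb{W}\cap \mathbb{C}^{n,1}_{-}$ for some proper complex subspace $\mathbb{W}\subsetneq \mathbb{C}^{n+1}$ of signature $2$, so $N\subset \mathbb{W}$ lies inside a proper affine subspace of $\mathbb{C}^{n,1}$ and hence fails to be full.

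There is no substantial obstacle here. The only point worth emphasizing is that the affine span of any lift is automatically a complex linear subspace of signature $2$; once this is in hand, non-fullness transfers in both directions through the standard complex-linear-subspace description of totally geodesic K\"ahler submanifolds recalled just before the statement, and Remark~\ref{rem:full7} is needed only to rule out, on the downstairs side, totally real totally geodesic witnesses of non-fullness.
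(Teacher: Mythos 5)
Your argument is correct and follows essentially the same route as the paper: it combines the observation that the affine hull of the lift is a complex linear subspace of signature $2$, the correspondence between totally geodesic K\"ahler submanifolds of $\mathbb{C}H^{n}$ and signature-$2$ complex subspaces of $\mathbb{C}^{n,1}$, and Remark~\ref{rem:full7} to rule out totally real witnesses downstairs. The paper leaves the two implications implicit ("the previous discussion and Remark~\ref{rem:full7} imply"), and your write-up simply makes them explicit.
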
	
	One has the following result:
	 \begin{lemma}\label{lem:nullity43}
		Let $\bar N$ be a K\"ahler submanifold of $\mathbb CH^n$, and let 
		$N$ be its lift  to $\mathbb C^{n,1}$. Let  ${\mathcal N} _q$ be the nullity of the second fundamental form of $N$ at $q$, and let  $\bar {\mathcal N} _{\pi(q)}$ be the nullity of the second fundamental form of $\bar N$ at 
		$\pi (q)$. Then, for all $q\in N$, 
		\begin{enumerate}[(i)]
			\item $\mathcal V_q \subset {\mathcal N} _q$
			
			\item $ {\mathcal N} _q = (\mathrm d_q\pi)^{-1}(\bar{\mathcal N} _{\pi(q)})$.
			
		\end{enumerate}
		In particular, if $\bar{\mathcal N} _{\pi(q)} = \{0\}$, then 
		${\mathcal N} _q = \mathcal V_q$.
	\end{lemma}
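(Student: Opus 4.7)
The plan is to establish (i) from the $\mathbb{C}^*$-invariance of $N$ via a linear vector field argument, and to establish (ii) by reducing it, on horizontal directions, to a submersion-type identification between the second fundamental form $\alpha$ of $N$ and that of $\bar N$, which I denote $\bar\alpha$.

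For (i), fix $\lambda\in\mathbb{C}$ and consider the linear vector field $V_x := \lambda x$ on $\mathbb{C}^{n+1}$. Since $\mathcal{V}_x = \mathbb{C}x\subset T_xN$ at every $x\in N$, the restriction $V|_N$ is a smooth tangent field of $N$ with $V_q = \lambda q$. The ambient flat connection gives $\nabla_w V = \lambda w$ for every $w\in T_qN$; because $N$ is pseudo-K\"ahler, $T_qN$ is a complex subspace of $\mathbb{C}^{n+1}$, hence invariant under multiplication by $\lambda$, so $\nabla_w V\in T_qN$. Therefore $\alpha(V_q,w) = (\nabla_w V)^\perp = 0$ for every $w\in T_qN$, proving $\lambda q\in\mathcal{N}_q$ and hence $\mathcal{V}_q\subset\mathcal{N}_q$.

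For (ii), since (i) gives $\mathcal{V}_q = \ker d_q\pi\subset\mathcal{N}_q$, both sides of the claimed equality already contain $\mathcal{V}_q$, and it remains to prove $v\in\mathcal{N}_q \Leftrightarrow \bar v := d_q\pi(v)\in\bar{\mathcal{N}}_{\pi(q)}$ for horizontal $v\in\mathcal{H}_q\cap T_qN$. From $\mathcal{V}_q\subset T_qN$ we get $\nu_qN\subset\mathcal{H}_q$, and $d_q\pi$ restricts to a linear isomorphism $\nu_qN\to\nu_{\pi(q)}\bar N$, so it suffices to prove the following identity: for horizontal $v,w\in\mathcal{H}_q\cap T_qN$, the normal vector $\alpha(v,w)\in\nu_qN$ is the horizontal lift of $\bar\alpha(\bar v,\bar w)\in\nu_{\pi(q)}\bar N$. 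To verify this, set $r := \sqrt{-\langle q,q\rangle}$ and use the fact that the restriction $\pi|_{H^{2n,1}_r}$ is a pseudo-Riemannian submersion onto $\mathbb{C}H^n$. Extend $v,w$ to horizontal tangent fields $X,Y$ of $N\cap H^{2n,1}_r$ arising as horizontal lifts of fields $\bar X,\bar Y$ on $\bar N$; these are automatically tangent to $N$. Decompose $\nabla^{\mathbb{C}^{n,1}}_X Y = \nabla^{H^{2n,1}_r}_X Y + \alpha^{H^{2n,1}_r\hookrightarrow\mathbb{C}^{n,1}}(X,Y)$. The umbilical contribution of the pseudo-sphere inclusion is a scalar multiple of $q\in\mathcal{V}_q\subset T_qN$, and the O'Neill formula $\nabla^{H^{2n,1}_r}_X Y = (\bar\nabla_{\bar X}\bar Y)^h + \tfrac12 [X,Y]^v$ has vertical piece along $Jq\in\mathcal{V}_q\subset T_qN$. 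Both terms are thus tangent to $N$, so the $\nu_qN$-component of $\nabla^{\mathbb{C}^{n,1}}_X Y$ at $q$ equals the $\nu_qN$-component of $(\bar\nabla_{\bar X}\bar Y)^h$, which is exactly the horizontal lift of $\bar\alpha(\bar v,\bar w)$. This establishes the identity and thus (ii); the final statement of the lemma follows immediately.

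The main obstacle is the careful bookkeeping across the three relevant connections (flat on $\mathbb{C}^{n,1}$, induced on $H^{2n,1}_r$, and Levi-Civita on $\mathbb{C}H^n$) together with the O'Neill tensor of the submersion $\pi|_{H^{2n,1}_r}$, and specifically the verification that both the umbilical correction from the pseudo-sphere inclusion and the vertical part of the O'Neill tensor land inside the vertical distribution $\mathcal{V}_q$, which is already tangent to $N$. An alternative, more computational route is to use a holomorphic parametrization $\Phi(u,\lambda) = \lambda\phi(u)$ of $N$, adjusted by replacing $\phi$ with $e^{f(u)}\phi$ for a suitable holomorphic $f$ so that $\partial\phi/\partial u^i$ is horizontal at the base point, and then to compare the normal parts of $\partial^2\Phi/\partial u^i\partial u^j$ with $\bar\alpha$ computed in the projected chart.
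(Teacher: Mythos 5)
Your proof is correct and follows essentially the same route as the paper: part (i) exploits the $\mathbb{C}^*$-invariance of $N$ (the paper phrases this as constancy of the tangent spaces along the fibers, while you differentiate the Euler-type field $x\mapsto\lambda x$ — an infinitesimal version of the same fact), and part (ii) uses the pseudo-Riemannian submersion $\pi|_{H^{2n,1}_r}$ together with O'Neill's formula, noting that the umbilical correction from $H^{2n,1}_r\hookrightarrow\mathbb{C}^{n,1}$ and the vertical O'Neill term both lie in $\mathcal{V}_q\subset T_qN$. No gaps.
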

	\begin{proof} Recall that the lift   of a K\"ahler submanifold of $\mathbb CH^n$ is a pseudo-K\"ahler submanifold of $\mathbb C^{n,1}$ and  
		observe that $\mathbb C^*N= N$. 
		Let, for $\lambda \in \mathbb C^*$, $\mu _ \lambda: \mathbb C ^{n,1}\to \mathbb C ^{n,1}$ denote the multiplication by  $\lambda$. If $q\in N$, then 
		$
		T_{\mu _\lambda (q)}N = \mathrm d \mu _\lambda (T_qN) = 
		\lambda (T_qN)= T_qN$. 
		This means that the tangent spaces of $N$ are constant along any fiber. This implies that 
		$\mathcal V _{\vert N}\subset  { \mathcal N}$. 
		This shows (i). 
		
		Let $-r^2 = 
		\langle q , q\rangle$,  let $\bar X,\bar Y$ be fields of $\bar N$ around $\pi (q)$ and let  $ X$, $Y$ be their horizontal lifts to $H^{2n,1}_r$.
		Since the normal space $\mathbb Rq$  of $H^{2n,1}_r$ at $q$ is included in ${\mathcal N} _q$, one obtains that $v\in T_qH^{2n,1}_r$ belongs to $\mathcal N _q$ if and only if $v$ is in the nullity of the second fundamental form $\hat \alpha$ of $\hat N := N \cap H^{2n,1}_r$ as a submanifold of 
		$ H^{2n,1}_r$.  If $\hat \nabla $ is the Levi-Civita connection of $H^{2n,1}_r$ we obtain, from O'Neill formulas that 
		$$\mathrm d \pi (\hat \nabla _{X_q} Y)= 
		\bar \nabla _{\bar X_q}\bar Y,$$ where $\bar \nabla$ is the Levi-Civita connection of $\mathbb CH^n$. Since   the normal space of $\hat N$ in $ H^{2n,1}_r$ is included in $\mathcal H _q$, we obtain, by taking normal components,  that 
		$$\mathrm d \pi\hat \alpha ( X_q,  Y_q)= \bar \alpha (\bar X_q,\bar Y_q), $$
		where $\bar \alpha$ is the second fundamental form of $\bar N$. From this it follows (ii).
	\end{proof}
\color{black}
	It is clear that the  normal holonomy of pseudo-K\"ahler submanifolds of pseudo-K\"ahler spaces 
	acts by complex endomorphisms.

	\begin{rem}\label{rem:Rli}
		Since the restriction of vertical distribution   $\mathcal V$ to $N$ is tangent to $N$, for any $q\in  N$, $\mathrm d_q\pi: \nu _qN\to \nu _{\pi(q)}\bar N$ is a homothecy of factor $r^{-2}$, where 
        $r^2=-\langle q,q\rangle$ . \end{rem}
	
	We have the following result: 
	
	\begin{lemma}\label{lem:NH-compare}
		Let $\bar N$ be a K\"ahler submanifold of $\mathbb CH^n$ and let 
		$N$ be its lift  to $\mathbb C^{n,1}$. Let $ q \in N$ be arbitrary and let $\bar q= \pi ( q)$. Then 
		$$S^1\bar \Phi (\bar q) = \mathrm d_q\pi (S^1 \Phi (q)):= \mathrm d_q\pi \circ ( S^1 \Phi (q)) \circ  \mathrm (d_q\pi_{\vert \nu _qN})^{-1}$$
		where $\Phi $ and $\bar{\Phi} $ are the local  normal holonomy groups of $N$ and $\bar N$, respectively and 
		$S^1$ is the group of unit complex numbers acting on the   normal spaces.
	\end{lemma}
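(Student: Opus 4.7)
The plan is to reduce to the pseudo-Riemannian submersion $\pi\colon H^{2n,1}_r\to \mathbb{C}H^n$ (with $r^2=-\langle q,q\rangle$) applied to the intersection $\hat N:=N\cap H^{2n,1}_r$, and then to identify the $S^1$ ambiguity that arises because loops in $\bar N$ lift to paths in $N$ that end on the $\mathbb{C}^*$-fiber rather than back at $q$.

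First I would record three elementary facts about the fiber $\mathbb{C}^*q\subset N$.
(a) Since $N$ is $\mathbb{C}^*$-invariant and $T_qN$ is $J$-invariant, $T_{\lambda q}N=T_qN$ and $\nu_{\lambda q}N=\nu_qN$ as linear subspaces of $\mathbb{C}^{n,1}$, for every $\lambda\in\mathbb{C}^*$.
(b) Hence for any $\xi\in\nu_qN$ the constant ambient field $t\mapsto\xi$ along a curve in $\mathbb{C}^*q$ lies in $\nu_{c(t)}N$ and has vanishing derivative, so it is $\nabla^\perp$-parallel; thus the normal parallel transport along any fiber curve is the identity of the underlying subspace $\nu_qN\subset\mathbb{C}^{n,1}$.
(c) Because $H^{2n,1}_r$ is totally umbilical in $\mathbb{C}^{n,1}$ with umbilical direction $q$, and $q\perp\nu_qN$, the normal spaces $\nu\hat N\subset H^{2n,1}_r$ and $\nu N|_{\hat N}\subset \mathbb{C}^{n,1}$ coincide, and the induced normal connections agree.

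Next I would apply an O'Neill-type analysis to the induced submersion $\pi|_{\hat N}\colon \hat N\to \bar N$, noting that the vertical generator $Jq$ is tangent to $\hat N$ (since $\mathbb Cq\subset T_qN$). For a curve $\bar c$ in $\bar N$ and its horizontal lift $c$ in $\hat N$, the key claim is that a normal field $\xi(t)\in\nu_{c(t)}\hat N$ is $\nabla^\perp$-parallel along $c$ if and only if $d\pi(\xi(t))\in\nu_{\bar c(t)}\bar N$ is $\bar\nabla^\perp$-parallel along $\bar c$. Indeed, the $T\hat N$-component of $\hat\nabla_{c'}\xi$ splits into a horizontal piece that under $d\pi$ becomes $\bar\nabla_{\bar c'}(d\pi\,\xi)$ and a vertical piece along $Jc$ which is automatically tangent to $\hat N$; so the obstruction to being parallel upstairs is exactly the obstruction downstairs.

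Finally, the assembly. A loop $\gamma$ at $q$ in $N$ projects to a loop $\bar\gamma=\pi\circ\gamma$ at $\bar q$ in $\bar N$, whose horizontal lift $c$ in $\hat N$ runs from $q$ to $c(1)=\lambda q$ with $\lambda\in S^1$. Since the parallel-transport ODE depends only on the projector onto $\nu_{\cdot}N$, which by (a) is constant along the fiber, a short ODE-uniqueness argument (using (b)) gives $\tau^\perp_\gamma=\tau^\perp_c$ as operators on the underlying subspace $\nu_qN$. Differentiating $\pi\circ\mu_\lambda=\pi$ (and using that $\mu_\lambda$ is linear) yields $d_{\lambda q}\pi=d_q\pi\circ\mu_{\lambda^{-1}}$; combining this with the correspondence from the O'Neill step gives
\[
\bar\tau^\perp_{\bar\gamma}=d_q\pi\circ(\mu_{\lambda^{-1}}\circ\tau^\perp_\gamma)\circ(d_q\pi)^{-1}.
\]
Since $\Phi(q)$ consists of complex-unitary maps and hence commutes with the $S^1$-action $\mu_\cdot$, multiplying by $S^1$ on both sides yields $S^1\bar\Phi(\bar q)\subset d_q\pi\bigl(S^1\Phi(q)\bigr)$. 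Running the same identity backward, applied to arbitrary elements $g=\tau^\perp_\gamma\in\Phi(q)$, gives $d_q\pi\circ g\circ(d_q\pi)^{-1}\in\lambda\cdot\bar\Phi(\bar q)\subset S^1\bar\Phi(\bar q)$, establishing the reverse inclusion.

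The main obstacle is the O'Neill step: verifying carefully that normal parallel transport (as opposed to tangent parallel transport) corresponds under the submersion. Once (a)–(c) are in hand, however, the key geometric input reduces to the single fact that the fiber direction $Jq$ is already tangent to $\hat N$, so no $\nu\hat N$-component is produced by the vertical part of $\hat\nabla_{c'}\xi$; the remaining bookkeeping between $d_q\pi$ and $d_{\lambda q}\pi$ is exactly what produces the $S^1$ factor in the statement.
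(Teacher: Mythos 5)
Your argument is correct, and it is essentially the same one the paper invokes: the paper's proof consists of a single reference to Lemma~7.5.4 (formula (7.7)) of \cite{BCO}, whose content is exactly your reduction to the submersion $H^{2n,1}_r\to \mathbb{C}H^n$, the constancy of $\nu N$ along the $\mathbb{C}^*$-fibers, the correspondence of normal parallel transports under the submersion, and the endpoint shift $c(1)=\lambda q$ producing the $S^1$ factor. Your write-up supplies the details the paper leaves to the citation, so no further comparison is needed.
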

	\begin{proof} The arguments are the same as those inside  the proof of Lemma 7.5.4 of \cite{BCO} for proving formula 
		(7.7) there. 
	\end{proof}
	
	\vspace{.15cm}
	
	 Let us recall that the  {\it index of relative nullity}, of a non-degenerate submanifold of a pseudo-Riemannian manifold, is the dimension of $\mathcal N _q$ where  $\mathcal N _q$ is the nullity subspace  of the second fundamental form at $q$. The set of points where the index of relative nullity attain its minimum  is open. If the submanifold is connected  and analytic, then this set is also dense. 
	
	\vspace{.15cm}
	
	We recall a result from Alekseevsky and Di Scala (see Theorem 1, Theorem 2 and Corollary 1 of \cite{AD}):
	
	\begin{theorem}[\cite{AD}]\label{th:AD} Let $\bar N$ be a K\"ahler submanifold of a space of constant holomorphic curvature. If the index of relative nullity at $\bar q$ is zero, then the restricted normal holonomy group $\bar \Phi (\bar q)$ acts on the normal space as the isotropy representation of an irreducible Hermitian symmetric space. In particular, $\bar \Phi (\bar q)$ contains the group of multiplications by unit complex numbers.
	\end{theorem}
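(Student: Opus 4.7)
The theorem has two parts: that $\bar\Phi(\bar q)$ acts as the isotropy representation of an irreducible Hermitian symmetric space, and that it contains the $S^1$-scalars. The second follows from the first (the center of any irreducible Hermitian isotropy representation contains $S^1$), so the content is the Hermitian $s$-representation structure together with irreducibility. My plan is to invoke the general Kähler normal holonomy theorem of Alekseevsky--Di Scala \cite{AD} for the structure, and then prove irreducibility by contradiction using the lift to the flat ambient developed in Section~3.

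On each $\bar\Phi(\bar q)$-invariant, $J$-invariant, irreducible component of $\nu_{\bar q}\bar N$, the Alekseevsky--Di Scala theorem yields a Hermitian $s$-representation (up to multiplication by unit complex numbers), so it remains to prove irreducibility. Suppose for contradiction that $\nu_{\bar q}\bar N = V_1 \oplus V_2$ is a non-trivial $J$-invariant, $\bar\Phi(\bar q)$-invariant decomposition. Parallel transport yields a $\nabla^\perp$-parallel orthogonal decomposition of $\nu\bar N$ near $\bar q$. Lift $\bar N$ to $N \subset \mathbb{C}^{n,1}_{-}$; by Lemma~\ref{lem:NH-compare}, the $S^1$-enlarged normal holonomy of $N$ inherits the decomposition $\nu N = \mathcal{W}_1 \oplus \mathcal{W}_2$ (each $\mathcal{W}_i$ being $J$-invariant), and by Lemma~\ref{lem:nullity43} the zero relative nullity of $\bar N$ translates to $\mathcal{N}_q = \mathcal{V}_q$ on $N$. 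Since $\nu N$ is positive definite, the Ricci identity applied to $\nabla^\perp$-parallel sections $\xi \in \mathcal{W}_1$, $\eta \in \mathcal{W}_2$ gives $[A_\xi, A_\eta] = 0$; combined with the Kähler anti-commutation $A_\zeta J + J A_\zeta = 0$, this produces a $J$-compatible simultaneous eigenspace refinement of $T_q N$ for the two commuting families of shape operators.

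The main obstacle is converting this eigenspace structure into a contradiction with $\mathcal{N}_q = \mathcal{V}_q$. Since the ambient $\mathbb{C}^{n,1}$ is flat, a Moore-type lemma should yield an orthogonal $J$-invariant splitting $T_q N = T_1 \oplus T_2$ with $A_\xi|_{T_j} = 0$ whenever $\xi \in \mathcal{W}_i$, $i \neq j$. The vertical $\mathcal{V}_q \subset \mathcal{N}_q$ lies in one of the $T_i$; the other factor then contributes additional directions to $\bigcap_{\xi \in \nu_q N} \ker A_\xi$, enlarging $\mathcal{N}_q$ strictly beyond $\mathcal{V}_q$ and contradicting the hypothesis. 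The delicate point is carrying through the pseudo-Riemannian Moore-type argument in signature $(2n,2)$ while handling the vertical distribution cleanly; the fullness of the lift (Proposition~\ref{prop:fullif}) is needed to preclude trivial degenerate splittings.
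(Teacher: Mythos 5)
This statement is not proved in the paper at all: it is imported verbatim from Alekseevsky--Di Scala, and the paper's ``proof'' is the citation to Theorems~1, 2 and Corollary~1 of \cite{AD} (the irreducibility under zero relative nullity is itself part of what \cite{AD} establish, so strictly speaking you could close your argument the same way you open it, by citing \cite{AD}). Judged as a self-contained reconstruction, your set-up is sound and close in spirit to how \cite{AD} and \cite{CDO} actually argue: lifting to $\mathbb{C}^{n,1}_-$, using that $\nu N$ is Riemannian and $T_qN$ nondegenerate to get $[A_\xi,A_\eta]=0$ for $\xi,\eta$ in the two parallel $J$-invariant factors, and then upgrading this via $A_{J\eta}=JA_\eta=-A_\eta J$ to $A_\xi A_\eta=0$, which is what feeds a Moore-type splitting.

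The genuine gap is in your last step. The existence of an orthogonal $J$-invariant splitting $T_qN=T_1\oplus T_2$ with $A_\xi|_{T_j}=0$ for $\xi\in\mathcal W_i$, $i\neq j$, does \emph{not} enlarge the relative nullity: $T_1$ is annihilated only by the shape operators coming from $\mathcal W_2$, not by those from $\mathcal W_1$, and an extrinsic product of two submanifolds each with zero nullity again has zero nullity (the join example in the Introduction has exactly this block structure). So ``the other factor then contributes additional directions to $\bigcap_\xi\ker A_\xi$'' is false as stated, and no contradiction with $\mathcal N_q=\mathcal V_q$ is reached. The missing idea is the $\mathbb{C}^*$-cone structure of the lift: once Moore's lemma splits $N$ locally as an extrinsic product $N_1\times N_2$ inside $\mathbb{W}_1\oplus\mathbb{W}_2$, the invariance $\mathbb{C}^*N=N$ forces each non-trivial factor to be (generically) a complex cone in its own $\mathbb{W}_i$, hence to carry the complex line $\mathbb{C}q_i$ in its nullity; two non-trivial factors therefore give real nullity at least $4$, contradicting $\dim_{\mathbb R}\mathcal N_q=\dim_{\mathbb R}\mathcal V_q=2$. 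Without this (or an equivalent mechanism), and without the care needed in signature $(2n,2)$ to check that the Moore factors are nondegenerate and that the case where the position vector lies entirely in one factor is handled, the argument does not close.
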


\begin{rem}\label{rem:local-hol} If one replace in Theorem \ref{th:AD} the restricted normal holonomy group by the local holonomy group, then the conclusion is the same. In fact, the local normal holonomy group at $p$ is the normal holonomy group at $p$ of a small simply connected neighbourhood of $p$.
\end{rem}
	
	Observe that the lift of a K\"ahler submanifold of $\mathbb CH^n$ is an essentially Riemannian submanifold $\mathbb C^{n,1}$ with the same signature. Then Lemma \ref{lem:NH-compare}, Theorem 
	\ref{th:NHT}, and Theorem \ref{th:AD} imply: 
	
	\begin{cor}\label{cor:NH-compare}
 Let $\bar N$ be a K\"ahler submanifold of $\mathbb CH^n$ with index of relative nullity $\nu _{\bar q} = 0$ at $\bar q \in \bar N$. Let 
		$N$ be the lift of $\bar N$  to $\mathbb C^{n,1}$, let  $ q \in \tilde N$ be such that  $\pi (q) = \bar q$. Then 
		$$\bar \Phi (\bar q) = \mathrm d_q\pi ( \Phi ( q))$$
		where $\bar \Phi $ and $ \Phi $ are the  local  normal holonomy groups of $\bar N$ and $N$, respectively. Moreover, $\bar \Phi (\bar q)$  and ${\Phi}(q)$ act irreducibly as the isotropy representation of a Hermitian symmetric space.
	\end{cor}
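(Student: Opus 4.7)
The plan is to combine Theorem \ref{th:AD} (applied to $\bar N$), Lemma \ref{lem:NH-compare}, and Theorem \ref{th:NHT} (applied to the lift $N$, which is essentially Riemannian of signature $2$ inside $\mathbb C^{n,1}$). First, by Lemma \ref{lem:nullity43}(ii) the hypothesis $\bar \nu_{\bar q}=0$ yields $\mathcal N_q=\mathcal V_q$, so the horizontal shape operators of $N$ correspond to those of $\bar N$ via $\mathrm d_q\pi$. Theorem \ref{th:AD} together with Remark \ref{rem:local-hol} asserts that $\bar\Phi(\bar q)$ acts on $\nu_{\bar q}\bar N$ as an irreducible Hermitian $s$-representation; in particular it contains the $S^1$ of unit complex scalars, so $S^1\bar\Phi(\bar q)=\bar\Phi(\bar q)$. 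Combined with Lemma \ref{lem:NH-compare}, this gives $\mathrm d_q\pi(S^1\Phi(q))=\bar\Phi(\bar q)$, so $S^1\Phi(q)$ acts irreducibly on $\nu_q N$ via $\mathrm d_q\pi$.

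Next I would show that the fixed subspace $F$ of $\Phi(q)$ in $\nu_q N$ is zero. Since $\Phi(q)$ commutes with $J$, $F$ is $J$-invariant, hence $S^1$-invariant, and therefore $S^1\Phi(q)$-invariant; by the irreducibility just noted, $F=0$ or $F=\nu_q N$. The latter would force $\Phi(q)=\{e\}$ and thus $\bar\Phi(\bar q)\subseteq S^1$ by Lemma \ref{lem:NH-compare}; but by Lemma \ref{lem:nullity43} the shape operators of $N$ and $\bar N$ agree on the horizontal distribution, so the non-triviality of $\bar\Phi(\bar q)$ forces, via the Ricci identity, non-commuting shape operators of $N$, contradicting $\Phi(q)=\{e\}$. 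Therefore $F=0$, and Theorem \ref{th:NHT} applied to the essentially Riemannian submanifold $N$ shows that $\Phi(q)$ acts on all of $\nu_q N$ as the isotropy representation of a semisimple Riemannian symmetric space.

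The hard part is to upgrade this to $\Phi(q)\supseteq S^1$. For this I would use that $\Phi(q)$ commutes with $J$, so the associated semisimple symmetric space admits an invariant almost complex structure and is therefore either Hermitian or quaternion-K\"ahler. In the Hermitian case $\Phi(q)$ contains the central $S^1$ by construction; in the quaternion-K\"ahler case it contains an $Sp(1)$-factor, and since the complex structure $J$ lies in the quaternionic commutant, $\{e^{tJ}\}\subseteq Sp(1)\subseteq\Phi(q)$. Hence $\Phi(q)=S^1\Phi(q)$, and Lemma \ref{lem:NH-compare} yields $\mathrm d_q\pi(\Phi(q))=\bar\Phi(\bar q)$. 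The irreducibility of both representations as Hermitian $s$-representations follows since they are identified via $\mathrm d_q\pi$, and $\bar\Phi(\bar q)$ is such by Theorem \ref{th:AD}.
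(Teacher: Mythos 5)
Your overall route---Theorem \ref{th:AD} plus Remark \ref{rem:local-hol}, Lemma \ref{lem:NH-compare}, and Theorem \ref{th:NHT} applied to the essentially Riemannian lift---is exactly the one the paper intends, and your first paragraph (irreducibility of $S^1\Phi(q)$ via the homothety $\mathrm d_q\pi$ of Remark \ref{rem:Rli}) is correct. The problems are in the two steps where you try to squeeze $\Phi(q)$ up to $S^1\Phi(q)$. The first is the elimination of $\Phi(q)=\{e\}$: you claim that the non-triviality of $\bar\Phi(\bar q)$ forces, ``via the Ricci identity, non-commuting shape operators.'' This ignores the ambient curvature of $\mathbb CH^n$. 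For a complex submanifold of a space of nonzero constant holomorphic curvature the Ricci equation reads $\langle\bar R^{\perp}_{x,y}\xi,\eta\rangle=\langle[\bar A_\xi,\bar A_\eta]x,y\rangle+c\,\langle \bar Jx,y\rangle\langle \bar J\xi,\eta\rangle$, so $\bar\Phi(\bar q)\supseteq S^1$ holds for \emph{every} complex submanifold, including totally geodesic ones with vanishing shape operators; non-triviality of $\bar\Phi(\bar q)$ by itself gives no information about the commutators. The case has to be excluded differently: in complex codimension at least $2$, $S^1\Phi(q)=S^1$ cannot act irreducibly, contradicting your first paragraph; in complex codimension $1$ one needs the K\"ahler identities $A_{J\xi}=-A_\xi J$, $A_\xi J=-JA_\xi$, which give $[A_\xi,A_{J\xi}]=\pm2JA_\xi^{2}\neq0$ because zero relative nullity makes $A_\xi$ invertible on the Riemannian horizontal distribution, and only then does the (flat-ambient) Ricci identity for $N$ yield $\Phi(q)\neq\{e\}$.

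The second gap is the quaternion-K\"ahler alternative, which is both vacuous and internally inconsistent as you treat it. Since $\Phi(q)$ acts by complex-linear maps, $J$ lies in the commutant of $\Phi(q)$; the commutant of the isotropy representation of an irreducible quaternion-K\"ahler symmetric space is $\{\pm\mathrm{Id}\}$ and contains no complex structure, so this case cannot occur at all---that is the argument you need. Your proposed resolution, $\{e^{tJ}\}\subseteq Sp(1)\subseteq\Phi(q)$, would place $J$ inside the $Sp(1)$-factor while requiring it to commute with that factor, i.e.\ to be central in $Sp(1)$; but $Z(Sp(1))=\{\pm1\}$ contains no complex structure, so the step is self-contradictory. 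Finally, before applying Schur's lemma to identify the central circle of the Hermitian factor with $\{e^{tJ}\}$, you must still rule out that $\Phi(q)$ acts reducibly with $J$ interchanging two isomorphic irreducible summands (which is compatible with irreducibility of $S^1\Phi(q)$); this is where the product structure of Proposition \ref{prop:NHT}, with each factor acting trivially on the complementary summands, has to be brought in.
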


\begin{lemma} \label{lem:int3}
Let $\bar N$ be a K\"ahler submanifold of 
$\mathbb CH^n$ and let 
$N$ be the lift of $\bar N$  to $\mathbb C_{-}^{n,1}$. Let
$\mathcal V '$ be the restriction to $N$ of the vertical distribution $\mathcal V$ of $\mathbb C_{-}^{n,1}$ and let 
$\mathcal H' = \mathcal V'^\perp= \mathcal H \cap TN$. 
Then $\mathcal H'$ has no integral manifolds.
\end{lemma}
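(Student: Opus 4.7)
The plan is to suppose, for contradiction, that through some $q\in N$ there exists a local integral manifold $S$ of $\mathcal{H}'$, and then to exhibit vector fields tangent to $S$ whose Lie bracket has a nonzero vertical component. This will violate the Frobenius condition and rule out the existence of $S$.

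First I would introduce the position vector field $\xi$ on $\mathbb{C}^{n,1}$, defined by $\xi(p)=p$, together with $\eta:=J\xi$, so that $\eta(p)=Jp$. Because $\mathbb{C}p=\mathcal{V}'_p\subset T_pN$ for every $p\in N$, both $\xi$ and $\eta$ restrict to tangent vector fields of $N$ that pointwise span $\mathcal{V}'$. With respect to the flat ambient connection $\nabla$ of $\mathbb{C}^{n,1}$, one has the elementary identities $\nabla_Z\xi=Z$ and $\nabla_Z\eta=JZ$ for every vector field $Z$, since $\xi$ is the identity vector field and $J$ is parallel.

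The key computation goes as follows. For vector fields $X,Y$ tangent to $S$, which are in particular horizontal, i.e.\ satisfy $\langle X,\eta\rangle=\langle Y,\eta\rangle=0$ along $S$, differentiating these relations yields $\langle\nabla_XY,\eta\rangle=-\langle Y,JX\rangle$ and $\langle\nabla_YX,\eta\rangle=-\langle X,JY\rangle$. Using $[X,Y]=\nabla_XY-\nabla_YX$ together with the skew-symmetry identity $\langle JA,B\rangle=-\langle A,JB\rangle$, one gets
\[
\langle [X,Y],\eta\rangle \;=\; -2\langle JX,Y\rangle.
\]

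To close the argument I would use that $\mathcal{H}'$ is $J$-invariant (because $\mathcal{H}$ and $\mathcal{V}$ are both $J$-invariant on $\mathbb{C}^{n,1}_-$ and $TN$ is $J$-invariant since $N$ is pseudo-Kähler) and positive definite. Specialising to $Y:=JX$ at $q$, for any nonzero $X\in T_qS=\mathcal{H}'_q$, gives $\langle[X,JX],\eta\rangle_q=-2\langle JX,JX\rangle_q<0$, so $[X,JX]_q$ has a nonzero $\eta$-component and in particular is not horizontal. This contradicts the Frobenius condition for the putative integral manifold $S$. I do not expect any essential obstacle; the only delicate point is careful sign bookkeeping in the skew-Hermitian identity.
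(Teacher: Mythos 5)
Your proof is correct and is essentially the paper's argument in unwound form: the paper observes that on a putative integral manifold the field $p\mapsto Jp$ would be a parallel normal field whose shape operator equals $-J|_{\mathcal H'}$, which cannot be symmetric because $J$ is skew-adjoint and $\mathcal H'$ is positive definite and $J$-invariant. Your identity $\langle [X,Y],\eta\rangle=-2\langle JX,Y\rangle$ is precisely the failure of that symmetry, detected directly through the Frobenius condition, so the two arguments coincide in substance.
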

\begin{proof}  Assume that $N'$ is an integral manifold of $\mathcal H'$. Since $\mathcal V '$ is $J$-invariant, then $N'$ is a pseudo-K\" ahler (Riemannian) submanifold of $\mathbb C_{-}^{n,1}$. 
	Observe, since $N'$ is always perpendicular to the position vector, that $N'\subset H^{2n,1}_r$, where 
	$-r^2 = \langle q, q\rangle$ is independent of $q\in N'$.
	Let $\xi$ be the restriction to $N'$ of the position vector field, which is an umbilical parallel normal vector field of $N'$, i.e. 
	$A_{\xi}= - Id$ where $A$ is the shape operator of $N'$.  Observe that $J\xi$, as well as 
 $\xi$, is a parallel normal filed  and $A_{J\xi}= JA_{\xi} = -JId$.  The left hand side of this equality is a symmetric $(1,1)$  tensor on $N'$ while the right hand side is skew-symmetric and non-null. A contradiction.
\end{proof}



	\section{Holonomy tubes and the canonical foliation}\label{sec:NT1}

	The general arguments for this section are be based on 
	\cite{CDO}, \cite[Section 7]{BCO}, but our notation is slightly different for the restricted normal holonomy groups. We will adapt the arguments in these references to the pseudo-Riemannian case. Moreover, we will simplify  some crucial proofs there.   The main difficulty is to deal with degenerate orbits of  normal holonomy groups associated to focalization at infinity of the leaves of  nullity foliations. 
	
	We keep the general notation of previous sections.
	
	\vspace{.15cm}
	
	\underline{\it General assumption}: $\bar N$ is a K\" ahler (local) submanifold of $\mathbb CH^ n$ with zero index of relative nullity at any point.
	
	\vspace{.15cm}
	
	Let   $N : =  \pi ^{-1}(\bar N)$ be the lift of $\bar N$  to $\mathbb C^{n,1}$. Then, by Lemma \ref{lem:nullity43}, the nullity distribution $\mathcal N$ of $N$ coincides with the restriction to $N$ of the $2$-dimensional $\pi$-vertical distribution $\mathcal V$, i.e.,   $\mathcal V_q =\mathbb Cq$. 
	
	Since the signature of $N$ is the same as that of the ambient space, the normal space $\nu N$ is Riemannian. Moreover, by Corollary \ref{cor:NH-compare}, the normal holonomy group $\Phi (q)$ of $N$ at  $q$ acts as an irreducible Hermitian $s$-representation. 
	
	Let  $\zeta _q\in  \nu _q N$ be a small principal vector for the normal holonomy action and let 
	$(N)_{\zeta _q}$ be its associated  holonomy tube (possibly, by  making $N$ smaller around $q$).   Observe that $(N)_{\zeta _q} = (N)_{\zeta '}$, where $\zeta '$ is the normal parallel transport of $\zeta$ along any curve starting at $q$.

	Let $\eta _1 , \cdots , \eta _g$ be the curvature normals, with associated eigendistributions  $E_1, \cdots , E_g$, of the commuting family of shape operators of the isoparametric homogeneous submanifold 
	$\Phi(q)\cdot \zeta _q$ of $\nu _qN$ (see \cite {PT} or chapter 4 of \cite{BCO}). Moreover, the curvature normals are parallel in the normal connection of the orbit  $\Phi(q)\cdot \zeta _q$. We  regard such an orbit as a submanifold of the affine normal space $q + \nu _qN$. That is, we identify 
	$$\nu _qN \simeq q + \nu _qN \text{\ \ \ \ and \ \ \ } \Phi(q)\cdot \zeta _q \simeq q + \Phi(q)\cdot \zeta _q.$$
	
	Since $\Phi(q)$ acts irreducibly, $\Phi(q)\cdot \zeta _q$ is full in the normal space $\nu _qN$ and thus, the curvature normals span the normal space of  $\Phi(q)\cdot \zeta _q$ at any point of the orbit. Observe that the normal space to such an orbit coincides with the normal space of the holonomy tube 
	$(N)_{\zeta _q}$ (see 
 \cite[p.130]{BCO}). The integral manifold $S_i(x)$ of $E_i$ by $x\in \Phi(q)\cdot \zeta _ q$  is an extrinsic sphere, a so-called curvature sphere, of $\nu _qN$. One has that 
	\begin{equation}
	S_i(x)\subset	E_i(x)\oplus \mathbb R\,  \eta _i(x).
	\end{equation}
	 As in the case of Riemannian submanifolds of Euclidean space, every curvature normal of the holonomy orbit $\Phi(q)\cdot \zeta _q$ extends to a parallel normal field of $(N)_{\zeta _q}$ and its  associated autoparallel eigendistribution extends in a natural way to $(N)_{\zeta _q}$. We  denote such extensions by $\tilde \eta _i$ and $\tilde E_i$, $i=1, \cdots , g$. If  
	 $\mathrm{pr}: (N)_{\zeta _q} \to N$ denotes the projection, then the fibers, which are totally geodesic and invariant under  all shape operators of $(N)_{\zeta _q}$, are given by 
	 \begin{equation}\label{eq:fibers}
	 	\mathrm{pr}^{-1}(\{\mathrm{pr}(x)\}) = \mathrm{pr} (x) + \Phi(\mathrm{pr} (x))\cdot (x-\mathrm{pr}(x))\subset 
	 	\mathrm{pr} (x) + \nu _{\mathrm{pr} (x)} N. 
	 \end{equation}
	 Moreover, $\tilde \eta_i(x)$ is a curvature normal at $x$ of the orbit $\mathrm{pr}(x) + \Phi(\mathrm{pr}(x))\cdot (x-\mathrm{pr}(x))\subset \mathrm{pr}(x) + \nu _{pr(x)}N$. Furthermore,  $\tilde E_i(x)$ is the eigenspace associated with $\tilde \eta _i(x)$ (see \cite[Remark 7.3.1]{BCO}). Observe that $\Phi(\mathrm{pr} (x))\cdot (x-\mathrm{pr}(x))$ is identified with $\Phi(q)\cdot \zeta_q$ by means of the normal parallel transport in $N$ along any curve from $q$ to $\mathrm{pr}(x)$. 
	 
	 \begin{rem} \label{rem:tubeflat}
  The normal 
  space $\nu _x (N)_{\zeta _q}$  coincides with the normal space of $\Phi(\mathrm{pr}(x))\cdot (x-\mathrm{pr}(x))\subset \nu _{\mathrm{pr}(x)}N$. Hence,  $\tilde \eta _1(x), \cdots , \tilde \eta _g(x)$ span $\nu _x (N)_{\zeta _q}$ for all $x\in (N)_{\zeta _q}$. Then the principal holonomy tube $(N)_{\zeta _q}$ has a flat normal bundle (see \cite[Thm. 4.4.12]{BCO}). In particular, the normal field $\tilde{\zeta}$ of $(N)_{\zeta _q}$ defined by $\tilde{\zeta} (x) = \mathrm{pr}(x) -x$ is parallel and hence $N$ is a parallel focal manifold of the holonomy tube. Namely, 
	 \begin{equation}\label{eq:N=N}
	 	N =((N)_{\zeta _q})_{\tilde {\zeta}}
	 \end{equation}
	 
	\noindent  Note that 
	 $\langle\tilde\zeta , \tilde \eta _i\rangle =1$ for $i= 1, \cdots , g$. In particular, $\tilde \eta _i \neq 0$.
  \end{rem}
	
	Recall that the nullity distribution $\mathcal N$ of $N$ coincides with the vertical distribution $\mathcal V$. Since the perpendicular distribution to $\mathcal V$ in $N$ is Riemannian, then the commuting family of shape operators of $(N)_{\zeta _q}$  can be simultaneously diagonalized, with real eigenvalues functions. In fact, this follows  from the {\it tube formula} \cite [Lemma 3.4.7]{BCO}.  
	Namely, at any point $x\in (N)_{\zeta _q}$ there exist different curvature normals $0 = \tilde \eta _0(x), \tilde \eta _1 (x), \cdots , \tilde \eta _{d(x)} \in \nu _x(N)_{\zeta _q}$, $d(x) > g$, and orthogonal decomposition $T_x(N)_{\zeta _ q} = \tilde E_0(x) \oplus \cdots \oplus \tilde  E_{d(x)}(x)$ such that 
	$$\tilde A_{\psi \vert \tilde E_i(x)} = \langle \psi , \tilde\eta _i(x)\rangle Id_{\tilde E_i(x)}$$
	for all $\psi \in \nu _x(N)_{\zeta _q}$, where $\tilde A$ denotes the shape operator of $(N)_{\zeta _q}$. As for Euclidean submanifolds, in an open and dense subset $\Omega$ of $(N)_{\zeta _q}$, $d(x)$ is locally constant. Moreover, $\tilde  E_i$ is an integrable distribution and $ \tilde \eta _i $ is a smooth normal field. Since we are working locally, we may assume that $\Omega = (N)_{\zeta _q}$ and that $d= d(x)$ does not depend on $x$.  In our notation 
	$\tilde \eta _1 , \cdots , \tilde \eta _g$ are the above mentioned extensions of the curvature normals of the holonomy orbit, being  $\tilde E_1 , \cdots , \tilde  E_g$ their associated (autoparallel) eigendistributions. Namely, $\tilde E_1 , \cdots , \tilde  E_g$ are the  {\it vertical} eigendistributions of $\nu _x(N)_{\zeta _q}$, with respect to the projection 
	$\mathrm{pr} : \nu _x(N)_{\zeta _q} \to N$. 
 
 In general, the curvature normals $\tilde \eta _{g+1}, \cdots , \tilde \eta _{d}$ are not $\nabla ^\perp$-parallel and the eigendistributions 
	$\tilde E_{g+1} , \cdots , \tilde E_d$  are not autoparallel. One has that $T(N)_{\zeta _q}$ decompose orthogonally as 
	\begin{equation}
	T(N)_{\zeta _q} = \hat {\mathcal V} \oplus \mathcal H
	\end{equation}
where $\mathcal H = (\ker (\mathrm{d}\, \mathrm{pr}))^\perp$ is the  $\mathrm{pr}$-horizontal distribution of the holonomy tube $(N)_{\zeta _q}$ and $\hat {\mathcal V}= \tilde E_1 \oplus  \cdots  \oplus\tilde  E_g$ is the vertical distribution. 
\begin{rem}\label{rem:ufa}From the tube formula \cite [Lemma 3.4.7]{BCO} (see (\ref{eq:8393})) one obtains that
\begin{equation}\label{eq:nul50}
\tilde{\mathcal V} \subset \tilde E_0
\end{equation}
where $\tilde E_0$ is the nullity distribution of 
$(N)_{\zeta _q}$
 and $\tilde{\mathcal V}$ is the $\mathrm{pr}$-horizontal lift of the distribution $\mathcal V$ of $N$ (in particular, $\tilde A_{\tilde\zeta }(\tilde {\mathcal V})= 0$). Then 
 \begin{equation}\label{eq:8393} \mathcal V _{\mathrm{pr}(x)} =\mathrm{d}_x \mathrm{pr} (\tilde {\mathcal V}_x) = 
 (Id -\tilde A_{\tilde\zeta (x)})(\tilde {\mathcal V}_x) = 
 \tilde {\mathcal V}_x
\end{equation}
This implies that  the distribution $\tilde {\mathcal V}$ is constant, in the ambient   space $\mathbb C^{n,1}$, along  any fibre $S(x):=\mathrm{pr}^{-1}(\{\mathrm{pr}(x)\})$.
\end{rem}

 Let  $\psi \in \nu _{\tilde q}(N)_{\zeta _q}$ be generic in the sense that it is not perpendicular to some $\tilde{\eta} _i (\tilde q) -\tilde{\eta} _j (\tilde q)$, $i,j \in \{0, 1, \cdots ,d\}$, $i\neq j$. Then $\psi$ extends to a parallel normal field $\tilde{\psi}$ around $\tilde q$ that distinguishes all the eigenvalues functions $\lambda _i (\cdot): = \langle \, \cdot  \,  , \tilde \eta _i  \rangle$. However, we will be  interested in  some parallel normal fields $\tilde{\xi}$ that do not distinguish such eigenvalue functions. In particular,   in the case that 
 $\ker \tilde A_{\tilde \xi}$ is  bigger than $\tilde E_0$, around a generic point  where $\dim \ker \tilde A_{\tilde \xi}$ is constant and hence a distribution. 

Let now $\tilde{\xi}$ be a parallel normal field of 
$(N)_{\zeta _q}$. Since we work locally, we may assume that 
$\ker \tilde A_{\tilde \xi}$ has constant dimension and so, by Codazzi identity, $\ker \tilde A_{\tilde \xi}$ is an autoparallel distribution that is invariant, due to Ricci identity, by all the shape operators of $(N)_{\zeta _q}$. Since $\tilde{\mathcal V}\subset \ker \tilde A_{\tilde \xi}$, this distribution is pseudo-Riemannian and the orthogonally complementary distribution $\mathcal H^ {\tilde{\xi}}: =(\ker \tilde A_{\tilde \xi})^\perp$ is Riemannian. Let us consider the equivalence relation on $(N)_{\zeta _q}$ given by $x\underset{\tilde \xi}{\sim }y$ if there exists a $\mathcal H^ {\tilde{\xi}}$-horizontal curve that connects $x$ with $y$ (see \cite[p.224]{BCO}). About a generic point the  equivalence classes have all the same dimension.
By means of the horosphere  embedding $f$ (see Section \ref{sec:horosphere}) every  equivalence class $H^{\tilde \xi}(x):=[x]$ may be locally viewed as a (possible degenerate) holonomy tube around a focal Riemannian manifold. Thus,  we can apply the results of Section \ref{subs:2.3aaa}, after replacing $N$ by $M=f((N)_{\zeta _q})$. Observe that under these identifications $\ker \tilde A_{\tilde \xi} = \ker (Id -A_{\tilde \xi -\tilde v})$, where $A$ is the shape operator of $M$ and $\tilde v$ is the (umbilical) position vector field.   In particular, by Lemma \ref{lem:743}, the tangent space to any equivalence class $\mathcal T_x : =T_xH^{\tilde \xi}(x)$ is invariant under all   shape operators of $(N)_{\zeta _q}$. 

Before stating the next crucial result, we introduce some notation: let $\mathbf F$ be the foliation of $N$ given by the hypersurfaces obtained by the intersection of $N$ with  the family of pseudo-hyperbolic spaces
$H^{2n,1}_r$ (see (\ref{eq:pHip2})). The element of 
${\mathbf F}$ that contains $q\in N$ is denoted by $F(q)$. Let  $\tilde{\mathbf F}: =\mathrm{pr}^{-1}(\mathbf F)$ which is a  foliation   of $(N)_{\zeta_q}$ by hypersurfaces. The element of  $\tilde{\mathbf F}$ that contains $x$ is denoted by $\tilde F(x)$. Then:

\begin{Mlemma}\label{lem:main343} We are under the assumptions and notation of this section. Let $U$ be an open subset of  $(N)_{\zeta_q}$ such that for all $x\in U$  the equivalence classes $H^{\tilde \xi}(x)$ have the same dimension. 
	Then, for all $x\in U$, 
	\begin{enumerate}
		\item S(x):= $\mathrm{pr}^{-1}(\{\mathrm{pr}(x)\}) \subset H^{\tilde \xi}(x)$ (locally). 
		\item $H^{\tilde \xi}(x)$ is non-degenerate. 
		\item $H^{\tilde \xi}(x) = \tilde F(x)$ (locally). In particular, the foliation $\tilde{\mathrm {F}}$ does not depend on  $\tilde \xi$ (and its is called the {\it canonical foliation} of the holonomy tube).
	\end{enumerate}
\end{Mlemma}
\begin{proof}
	Part (1) follows with exactly the same arguments, relying on the Homogeneous Slice Theorem, as those used for Euclidean submanifolds in \cite[Section 7.3, p. 217]{BCO}.
	(see also \cite[p. 202]{CDO}.
	 In order to prove part (2), we will first prove that the induced metric on $H^{\tilde \xi}(x)$, if degenerate, is positive semi-definite with a one-dimensional degeneracy.  
	Let us consider the foliation $\mathbf F$ of $N$.  Note  that any leaf $F(p)$ of this foliation  is a pseudo-Riemannian hypersurface of $N$ with signature $1$.
	This foliation is perpendicular to  the position vector field $\vec v$. Observe that  $\vec v$ lies in the vertical distribution $\mathcal V$ and hence in the nullity distribution of $N$.  Let us consider the foliation $\tilde{\mathbf F} =\mathrm{pr}^{-1}(\mathbf F)$ by pseudo-Riemannian hypersurfaces of signature $1$ of 
	$(N)_{\zeta _q}$. 
	Let $\tilde v$ be the $\mathrm{pr}$-horizontal lift of $\vec v$. Then, by the tube formula \cite [Lemma 3.4.7]{BCO}, $\tilde v$ lies in the nullity distribution of $(N)_{\zeta_q}$ and hence in $\ker \tilde A_{\tilde \xi}$. Note that 
		$T_x\tilde F(x)= \tilde v_x^\perp$.
\noindent	Then \( H^{\tilde{\xi}}(x) \) lies in the leaf \( \tilde{F}(x) \) of \( \tilde{\mathbf{F}} \) through \( x \), which implies our assertion, since a linear subspace of a space of signature \( 1 \) is either Lorentzian or positive semi-definite.

	Let $\tilde \nu$ be the (autoparallel) distribution of $U$ which is perpendicular to the distribution $\mathcal T$ of tangent spaces of the equivalence classes $H^{\tilde \xi}(x)$  (see Section \ref{subs:2.3aaa}). 
	
	Assume that $H^{\tilde \xi}(x)$ is degenerate at $x$.   Then, 
	the intersection of $T_x H^{\tilde \xi}(x)\cap \tilde \nu _x$ is one-dimensional. Let $\psi \neq 0$  belong to this intersection. 
	Note that $\psi$ is an isotropic vector, i.e.  $\langle \psi , \psi\rangle =0$. From Lemma \ref{lem:743} the distribution $\mathcal T$ is invariant under all shape operators of $(N)_{\zeta_q}$ and in particular by  $\tilde A_{\tilde \zeta}$. Hence, $\tilde A_{\tilde \zeta _x\vert \mathcal T_x}= \hat A_{\tilde \zeta _x}$
	where $\hat A$ is the shape operator of $H^{\tilde \xi}(x)$ (see Remark \ref{rem:parallel-shape} for the definition of a parallel normal field to a degenerate submanifold, and its associated shape operator).  By the first part of this section, 
	$\tilde A_{\tilde \zeta}$ is diagonalizable, with real eigenvalues $\langle \tilde \zeta _x , (\tilde \eta _i)_x \rangle$, $i=0, \cdots , d$ (see the paragraph below Remark \ref{rem:tubeflat}). Then, 
	$\hat A_{\tilde \zeta _x}$ is diagonalizable with real eigenvalues. Since, from part (i), the distribution  tangent to the $\mathrm {pr}$-fibres, is 
	contained in $\mathcal T$, then the $1$-eigenspace of 
	$\hat A_{\tilde \zeta _x}$ coincides with the $1$-eigenspace $E_1^{\tilde \zeta}(x)$ of $\tilde A_{\tilde \zeta _x}$. By the last part of Remark \ref{rem:parallel-shape}, 
	$\hat A_{\zeta _x} \mathbb R\psi \subset \mathbb R\psi$ and hence $\psi$ is an eigenvector.  The only non-positive definite eigenspace of $\tilde A_{\tilde \zeta _x}$ is  $\ker \tilde A_{\tilde 
		\zeta _x}$ . Since $\psi$ is isotropic, we conclude that $\psi$ is a $0$-eigenvector, i.e. 
	$\hat A_{\tilde \zeta _x}\psi =0$. We regard now the isotropic vector $\psi$ as a vector perpendicular to $H^{\tilde \xi}$ at $x$,  and hence it extends to a parallel normal field $\tilde \psi$ of $H^{\tilde \xi}$ (see Lemma \ref{lem:744}). Let $v$ belong to  $$\tilde E_1^{\tilde \zeta}(x)= 
	\ker(Id- \tilde A_{\tilde \zeta _x} ) = 
	\ker(Id- \hat A_{\tilde \zeta _x})$$
	 and let $w=\hat A_{\tilde \psi _x}(v)$. Then, from Lemma \ref {lem:comm}, one has that 
		\begin{equation}\label{lap}\hat A_{\tilde \zeta _x}w = w + \lambda(v)\tilde \psi _x
		\end{equation} 
  for some scalar $\lambda (v)$ (we have used that the degeneracy of the metric of $H^{\tilde \xi}(x)$ has dimension $1$). Observe that the subspace $\mathcal T_x$ is invariant under the shape operator $\tilde A_{\tilde \zeta_x}$ of $(N)_{\zeta _q}$. Thus,   $\hat A_{\tilde \zeta_x} =(\tilde A_{\tilde \zeta_x})_{\vert \mathcal T}$ diagonalizes with real different eigenvalues
		 $\lambda _0 =0, \lambda _1 = 1 , \lambda _2,\cdots , \lambda _m$. Decompose $w= w_0+w_1+ \cdots + w_m$, where 
		$w_i$ is an eigenvector associated with  $\lambda _i$, $i=1, \cdots , m$. 
		
		 Then, by equation (\ref{lap}), since $\tilde \psi _x$ is a $0$-eigenvector, we conclude that $w=\hat A_{\tilde \psi _x}(v)$ is an $1$-eigenvector of  $\hat A_{\tilde \zeta_x}$. Then 
\begin{equation}
	\hat A_{\tilde \psi _x}\tilde E_1^{\tilde \zeta}(x)
	\subset \tilde E_1^{\tilde \zeta}(x)
\end{equation}
				and the same is true if one replaces $x$ for any arbitrary nearby $y\in 
				H^{\tilde \xi}(x)$. This implies that 
				$\tilde \psi$ is a parallel normal field of $S(x)= \mathrm{pr}^{-1}(\{\mathrm{pr}(x)\})$.
				
			Since $S(x)$ is a totally geodesic submanifold of $(N)_{\zeta_q}$ which is invariant under all shape operators, it is contained in the affine subspace

	 \begin{equation}  y +T_yS(x) \oplus  \nu _y (N)_{\zeta_q} \supset S(x)
	 	\end{equation}
	 for all $y\in S(x)$. The affine subspace 
	 $y +T_yS(y) \oplus  \nu _y (N)_{\zeta_q}$ does not depend on $y\in S(x)$ (observe that $S(x)=S(y)$). Observe that $\tilde \psi$ is both  perpendicular 
 and tangent to the (degenerate) equivalence class $H^{\tilde \xi}$. While the  latter condition implies that  it is perpendicular to $\nu(N)_{\zeta _q}$,  the first condition implies that it is perpendicular to the pr-fibers $S(x)$ (see part (i)).
	 Then $\tilde \psi$ is a constant field when restricted to  $S(x)$, since 
	 it is a parallel normal field which is perpendicular to an affine subspace that contains $S(x)$.
	 
	 Recall  that $\mathrm{pr}(y)  = y + \tilde \zeta (y)$ 
	 (see equality (\ref{eq:N=N})). Then 
	 \begin{equation}\label{eq:endpoint}
	 	\mathrm {d}_y(\mathrm {pr}) (\tilde \psi (y)) = (Id - \tilde{A}_{\tilde \zeta (y)})\tilde \psi (y)= \tilde \psi (y).
	 \end{equation}
	 	Since $\tilde \psi$ is constant along the fiber  $S(x)$ we obtain that the constant field $\tilde \psi _{\vert S(x)}$ projects down to the  vector $\tilde \psi (x)\in T_{\mathrm{pr}(x)}N$. Observe that  the union of the normal spaces  of $(N)_{\zeta _q}$ at different points of 
	 	$S(x)$ generates 
	 	$\nu _{\mathrm{pr}(x)}N$. Then, taking into account that 
	 	$\psi = \tilde \psi (y)$ belongs to the nullity $\tilde E_0(y)$ of $(N)_{\zeta _q}$ for any $y\in S(x)$, we obtain from the tube formula that $\tilde \psi (y) = 
	 	\psi$ belongs to the nullity subspace  
	 	$ \mathcal N_{\mathrm{pr}(x)}$ of $N$. Then the vector 
	 	$ \psi \in T_y(N)_{\zeta _q}$ is time-like. A contradiction since $ \psi$ is isotropic. This proves (2).
	 	
	 	(3) The inclusion $H^{\tilde \xi}(x) \subset  \tilde F(x)$ was proved inside the demonstration of part (2). The following arguments are similar to those  in  
	 	\cite[section 2]{CDO} (see also \cite[chap. 7]{BCO}. Let us consider the distribution $\tilde\nu$ perpendicular to the equivalence classes $H^{\tilde \xi}(y)$ and let $\Sigma (p)$ be the totally geodesic integral manifold of $\tilde\nu$ by $p$. Since the equivalence classes are non-degenerate by part (2), the
	 	same argument used in  the proof of  Proposition 2 (iv) of  \cite {CDO} shows that the equivalence classes are parallel manifolds of the ambient space. Namely, 
	 	$$ H^{\tilde \xi}(x) = (H^{\tilde \xi}(p))_{\mu _{(p,x)}}$$
	 	where $\mu _{(p,x)}$ is the  parallel normal field of $H^{\tilde \xi}(p)$ with $\mu _{(p,x)}(p) = x-p$ ($x\in \Sigma (p)$, near $p$). 
	 Let $\tilde E_1, \cdots , \tilde E _g$ be the autoparallel eigendistributions of $(N)_{\zeta _q}$, with associated parallel curvature normals $\tilde \eta _1, \cdots , \tilde \eta _g$, determined by the isoparametric full submanifolds $S(x)$
	 of the affine normal space $\mathrm {pr}(x) +
	 \nu _{\mathrm {pr}(x)}N$. By part (i), the restriction of $\tilde E_i$ to any 
	 $H^{\tilde \xi}(x)$ is tangent to this equivalence class. By the tube formula 
	 \begin{equation}\label{(iii)} \tilde \eta _i(x) = \frac {1}{1 - \langle (x-p),\tilde \eta _i (p)\rangle}\  
	 \tilde\eta_i (p), \ \ \ 
	 \end{equation}
 $x\in \Sigma (p)$  near  $p$, $i=1, \cdots , g$. Since $\tilde \eta _i$ has constant length,  we conclude that  $$\langle (x-p),\tilde \eta _i (p)\rangle = 0$$
or, more generally, 
\begin{equation}\label{2-2}\langle \mu _{(p,x)},\tilde \eta _{i \vert H^{\tilde \xi}(p) }\rangle = 0.
\end{equation}
	Since the curvature normals $\tilde \eta _1, \cdots , \tilde\eta _g$ associated to the $\mathrm {pr}$-fibres  generate, at any point,  the normal space of $S(p)$, regarded as a submanifold of $\nu _{\mathrm{pr}(p)}N$. Then, from (\ref{2-2}), $\mu _{(p,x)\vert S(p)}$ is a constant normal field along $S(p)$, regarded as a submanifold of the full ambient space $\mathbb C^{n,1}$. Then,  $x-p$ projects trivially to $\nu_p(N)_{\zeta _q}$, since it is spanned by $\tilde \eta _1(p), \cdots , \tilde \eta _g(p)$. Observe that $x-p$ is perpendicular to $H^{\tilde \xi}(p)$ at $p$, since it is the initial condition at $p$ of the normal field $\mu _{(p,x)}$. Since $x$ is arbitrary in $\Sigma (p)$, we obtain that 
	$\Sigma (p)$ is contained in the affine subspace 
	$p+ \tilde \nu _q = p + T_p\Sigma (p)$ and so it locally coincides with this subspace near $p$. This implies that $\tilde \alpha (\tilde \nu _p, \tilde \nu _p) = 0$, where $\tilde \alpha$ is the second fundamental form of $(N)_{\zeta _q}$. Since $\tilde \nu _p$ is invariant under all shapes operators of $(N)_{\zeta _q}$ at $p$, we obtain that $\tilde\nu _p$ is contained in the nullity of $\tilde \alpha$. Taking into account that     the parallel normal field  $\mu _{(p,x)}$ of $H^{\tilde \xi}(p)$ is constant along $S(p)$, one obtains  that $\Sigma (p)$ is a parallel affine subspace to $\Sigma (r)$ in the full ambient space, for all $r\in S(p)$ (locally). Then $\tilde\nu _p = \tilde \nu _r$ for all 
	 $r\in S(p)$, as linear subspaces. This implies, by the tube formula and the fact that the normal spaces of $\nu _rS(p)$, $r\in S(p)\subset \mathrm{pr}(p) + \nu _{\mathrm{pr}(p)}N $ span 
  	 $\nu _{\mathrm{pr}(p)}N$,  that $\tilde \nu _p$ belongs to the nullity of the second fundamental form $\alpha$ of $N$ at $\mathrm{pr}(p)$ (see \cite[chap.  7.3.2]{BCO}. Since the nullity of $\alpha$ is the distribution 
	 $y\to \mathbb Cy$, we obtain that $dim \,  \tilde \nu _p\leq 2$ and thus, the codimension of  $H^{\tilde \xi}(x)$ in $(N)_{\zeta _q}$ is at most $2$. Then the equivalence classes $H^{\tilde \xi}(x)$, since $H^{\tilde \xi}(x) \subset  \tilde F(x)$, locally coincide with $\tilde F(x)$ or have codimension $1$ in 
	 $\tilde F(x)$. In the first case we are done. In the second case, since $S(x)\subset H^{\tilde \xi}(x)$, the integrable foliation $\tilde {\mathcal T}$, given by the tangent spaces of the equivalence classes $H^{\tilde \xi}(x)$, projects down to an integrable distribution $\mathcal T := \mathrm{d\,}\mathrm{pr}(\tilde{\mathcal T})$ which (locally) coincides with the  distribution perpendicular to the vertical foliation $q \mapsto \mathbb C^*q$ of $N$. This contradicts Lemma \ref{lem:int3}. Thus, 
	 $H^{\tilde \xi}(x)$ coincides locally with 
	  $\tilde F(x)$.
	 	
	\end{proof}
	
 \begin{rem}\label{rem:971} We keep the notation and assumptions of this section. It was proved, inside the proof of Lemma \ref{lem:nullity43}, that 
 	$v\in T_zH^{2n,1}_r$ belongs to the nullity space of $N$ if and only if $v$ belongs to the nullity of the second fundamental form  of $F(z)= N \cap H^{2n,1}_r$ as a submanifold of 
 	$ H^{2n,1}_r$. Since the nullity of $N$ coincides with the distribution $y\mapsto \mathbb Cy$, we obtain that $\mathbb RJz$ coincides with the nullity of $F(z)$ as a submanifold of the umbilical submanifold  $H^{2n,1}_r\subset \mathbb C^{n,1}$, $r=\Vert z\Vert$. If  we regard $F(z)$ as a submanifold of $\mathbb C^{n,1}$, then 
 	we can decompose orthogonally the normal bundle 
 	into two parallel sub-bundles. Namely,  
 	$\nu F(z) = \nu_1 \oplus \nu _2$ where $\nu _1$ is one-dimensional, spanned by the position vector field, and $\nu _2$ is the normal bundle of $F(z)$ as a submanifold of $H^{2n,1}_r$. Then the distribution of $F(z)$ given by $x\to Jx$ is the common kernel of the family of shape operators 
 	$\{A_\mu : \nu \text{ is a section of } \nu _2\}$.  Observe that  the shape operator of the position vector field of $H^{2n,1}_r$ is minus the identity. 
 	
 Let $\tilde v$ be the  $\mathrm{pr}$-horizontal lift of $J\vec{v}$, where $\vec{v}$ is the position vector field of $N$. Observe that $\tilde v$ is time-like, and it is  tangent to any $\tilde F(x):= \mathrm {pr}^{-1}(F(\mathrm{pr}(x)))$. Moreover, by making use of the tube formula, we obtain that the one-dimensional distribution $\mathbb R\tilde v_{\vert \tilde F(x)}$
 is invariant under all the shape operators of $\tilde F(x)$. 
 Taking into account that the one-dimensional  bundle $\nu _1$ is time-like, we obtain  that the curvature normal $\tilde{\eta}_{g+1}$, associated with  the distribution $\mathbb R\tilde v_{\vert \tilde F(x)}$ 
 of $\tilde F(x)$, is 
 \begin{equation}\label{eq:cur1}
 	\tilde{\eta} _{g+1} = \frac{1}{r^2}\,{\vec{u}}_{\vert \tilde F(x)}
 \end{equation}
where ${\vec{u}}$ is the horizontal lift to $(N)_{\zeta_q}$ of the position (tangent) vector field $\vec{v}$ of $N$ and 
$r^2 = -\langle \vec{v}_{\mathrm{pr}(x)},  \vec{v}_{\mathrm{pr}(x)}\rangle$. Note, from the definition, that $F(z)= F(z')$, if $z'\in F(z)$ and 
$\langle z, z\rangle = \langle z', z'\rangle$. Moreover, $\tilde{\eta} _{g+1}$ is parallel in the normal connection of $\tilde F(x)$, as a Lorentzian submanifold of the ambient space $\mathbb C^{n,1}$. 

The labeling index  of $\tilde{\eta} _{g+1}$  is due to the fact that in our notation $\tilde {\eta} _1, \cdots , \tilde{\eta} _g$ are the parallel curvature normals associated to the vertical autoparallel distribution of $\tilde F(x)$
whose integral manifolds are isoparametric submanifolds of the ambient space  (see part (1) of Lemma  \ref{lem:main343}). The eigendistribution $\tilde E_{g+1}$, associated with $\tilde{\eta} _{g+1}$, could be bigger than $\mathbb R \tilde v$. In fact, it coincides with the restriction to $\tilde F(x)$ of the orthogonal complement of $\vec{u}$ in $\tilde E_0$, where 
$\tilde E_0$ is the nullity distribution of $(N)_{\zeta _q}$.
 \end{rem}
 
 \begin{lemma}\label{lem:coincides}
 	The local normal holonomy at $\mathrm{pr}(x)$ of $F(\mathrm{pr}(x))$, restricted to the orthogonal complement of the position vector $\vec v$,  coincides with the local normal holonomy group of $N$ at $\mathrm{pr}(x)$.
 	\end{lemma}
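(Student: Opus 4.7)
The approach is to identify the normal bundle of $F=F(\mathrm{pr}(x))$, as a submanifold of $\mathbb{C}^{n,1}$, with the orthogonal decomposition
\begin{equation*}
\nu F \;=\; \mathbb{R}\vec v\ \oplus\ \nu N\big|_F,
\end{equation*}
and to show that under this splitting the normal connection of $F$, restricted to the second factor, coincides with the normal connection of $N$ restricted to sections of $\nu N$ along $F$ and directions in $TF$. Since $F\subset H^{2n,1}_r$ and $\vec v$ is an umbilical parallel normal field of this pseudo-hyperbolic space, the line subbundle $\mathbb{R}\vec v$ is $\nabla^{\perp,F}$-parallel and hence so is its orthogonal complement $\nu N|_F$. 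The identification of connections follows from the formula $\nabla_X \xi = -A^N_\xi X + \nabla^{\perp,N}_X \xi$ for $\xi\in\Gamma(\nu N|_F)$ and $X\in TF$, together with $\langle A^N_\xi X,\vec v\rangle=\langle\alpha^N(X,\vec v),\xi\rangle=0$, which holds because $\vec v\in\mathcal N$ (see Lemma \ref{lem:nullity43} and Remark \ref{rem:971}). Hence $A^N_\xi X\in TF$, which forces the $\nu N$-component of $\nabla^{\perp,F}_X \xi$ to equal $\nabla^{\perp,N}_X\xi$.

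Consequently, the local normal holonomy of $F$ preserves the splitting of $\nu F$, acts trivially on the factor $\mathbb{R}\vec v$, and its restriction to $\nu N|_{\mathrm{pr}(x)}$ consists exactly of the $\nabla^{\perp,N}$-parallel transports around small loops in $F$ at $\mathrm{pr}(x)$. Since every loop in $F$ is a loop in $N$, this immediately gives the inclusion of the local normal holonomy of $F$ (restricted to $\vec v^\perp$) into the local normal holonomy of $N$ at $\mathrm{pr}(x)$.

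For the reverse inclusion, I would show that every small loop $\gamma$ in $N$ at $\mathrm{pr}(x)$ can be replaced, without changing the induced $\nabla^{\perp,N}$-parallel transport, by a loop lying in $F(\mathrm{pr}(x))$. The key observation is that $\mathbb{C}\vec v=\mathcal N_{\mathrm{pr}(x)}$, so $A^N_\xi$ annihilates $\vec v$ for every $\xi$, and the Ricci identity yields $R^{\perp,N}_{\vec v,\cdot}=0$. Consider the homotopy in $N$
\begin{equation*}
\Sigma(s,t) \;=\; e^{s\log\lambda(t)}\,\gamma(t),
\end{equation*}
where $\lambda(t)>0$ is chosen so that $\lambda(t)\gamma(t)\in F(\mathrm{pr}(x))$, i.e.\ $\lambda(t)^{2}\langle\gamma(t),\gamma(t)\rangle=\langle\mathrm{pr}(x),\mathrm{pr}(x)\rangle$ (note that $\lambda(0)=\lambda(1)=1$, so $\Sigma$ fixes the endpoints). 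This is a homotopy rel endpoints between $\gamma$ and $\tilde\gamma(t):=\Sigma(1,t)\subset F(\mathrm{pr}(x))$; its $\partial_s$-direction $\log\lambda(t)\,\Sigma(s,t)$ is a real multiple of the position vector field, hence collinear with $\vec v$, so the difference between the two parallel transports is an integral of $R^{\perp,N}(\partial_s\Sigma,\partial_t\Sigma)$ which vanishes.

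The main subtlety, as I see it, is the bookkeeping in the homotopy argument: one has to verify that the image of $\Sigma$ stays in a neighborhood of $\mathrm{pr}(x)$ small enough to compute the local holonomy, and that the standard formula expressing the difference of parallel transports along homotopic curves as an integral of $R^\perp$ applies verbatim in our indefinite ambient setting. Both points are routine once one uses that the integral curves of $\vec v$ are radial rays inside the fibers $\mathbb{C}^* q\subset N$ and that $N$ is invariant under the induced real scalar flow.
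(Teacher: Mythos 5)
Your argument is correct and takes the same route as the standard one the paper relies on: the paper's proof here is only the citation to \cite[Lemma 7.3.5(i)]{BCO}, and your two steps --- the parallel orthogonal splitting $\nu F = \mathbb{R}\vec v \oplus \nu N\vert_F$ with coincidence of the induced normal connections, plus the radial homotopy whose curvature contribution vanishes because $R^{\perp,N}_{\vec v,\,\cdot}=0$ on the nullity --- are exactly the ingredients of that cone argument. Nothing further is needed.
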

 \begin{proof} The proof is the same as that of Lemma 7.3.5 (i).
 \end{proof}
	\section{The geometry of the equivalence classes }\label{sec:isop1}
	
	To ensure clarity, we begin by summarizing the main results of the previous section, explaining them in some detail. Recall that we work locally, and our results---though not always stated explicitly---hold in a neighborhood of a generic point  of the submanifolds involved.

 If $\tilde \xi$  is a parallel normal field of $(N)_{\zeta _q}$, then 
	$H^{\tilde \xi}(x)\subset (N)_{\zeta _q}$ is the equivalence class of $x$, where $x\sim y$ if there exists a curve perpendicular to $\ker \tilde A_{\tilde \xi}$ connecting $x$ with $y$.  Then 
	$H^{\tilde \xi}(x)$ is a hypersurface of 
	$(N)_{\zeta _q}$ that  (locally) coincides with 
	$\tilde F(x) = \mathrm{pr}^{-1}(F(\mathrm{pr}(x)))$, where 
	$F(\mathrm{pr}(x))= N\cap H_r^{2n,1}$ and $r^2= -\langle \mathrm{pr}(x), \mathrm{pr}(x)\rangle$ (see Lemma \ref{lem:main343}). One has that 
	$\tilde F(x)$ is invariant under all shape operators of $(N)_{\zeta _q}$ (see Lemma \ref{lem:743} and the paragraphs previous to Lemma \ref{lem:main343}). Moreover, the normal bundle of $\tilde F(x)\subset \mathbb C^{n,1}$ splits as the orthogonal sum of the following parallel and flat  subbundles
\begin{equation}\label{eq:810}
	\nu \tilde F(x) = \tilde \nu _1\oplus \tilde \nu _2 \, ,
\end{equation}
	where $\tilde \nu _1 = \mathbb R {\vec{u}}_{\vert \tilde F(x)}$, $\tilde \nu _2 = (\nu (N)_{\zeta _q})_{\vert \tilde F(x)}$ and $\vec{u}$ is the $\mathrm{pr}$-horizontal lift of the position vector field $\vec{v}$ of $N$. One has that both $\tilde F(x)$ and its normal bundle are Lorentzian.
	 Moreover, the commuting symmetric family  of  shape operators $\{\tilde A_\mu\}$ of $\tilde F(x)$
	diagonalize  simultaneously  with real eigenvalues.
	In fact, the eigendistribution $\tilde E_{g+1}$ associated to the  parallel section $\tilde \eta _{g+1}$ is non-zero and contains the timelike vector $\tilde v$ (see last part of  Remark \ref{rem:971}). Since 
	$\tilde F(x)$ is Lorentzian, $\tilde E_0$ is non-degenerate and its orthogonal complement is a Riemannian distribution. This implies our assertion.
	
	Let us finally recall that $\tilde F(x)$ contains the fibre $\mathrm{pr}^{-1}(\{\mathrm{pr}(x)\})$ (see Lemma \ref{lem:main343}).

\begin{rem}\label{rem:sinNul} The  nullity of $\tilde F(x)$ is trivial, as a submanifold of $\mathbb C^{n,1}$.  In fact, let $\tilde \zeta$ be the 
	parallel normal  vector field of $(N)_{\zeta _q}$ such that $\mathrm {pr}(y) = y +  \tilde \zeta (y)$. Then, by the tube formula, see (\ref{eq:endpoint}), 
	$\mathrm {d\, pr}(\vec u_y)= (Id - \tilde A_{\zeta})\vec u_y = \vec u _y = \vec v _{\mathrm{pr}(y)}$, as vectors of the ambient space $\mathbb C^{n,1}$. Taking into account that the position (normal) vector field of $F(\mathrm{pr}(y))$ is umbilical, we obtain 
	$$-Id = A_{\vec v (\mathrm{pr}(y))}= 
	\tilde A_{\vec u _y}\big((Id- 
	\tilde A_{\tilde \zeta _y})_{\vert \mathcal H}\big)^{-1},$$
	where $\mathcal H$ is the $\mathrm {pr}$-horizontal distribution of $\tilde F(x)$. This shows that 
	$\tilde A_{\vec u _y\vert \mathcal H}$ has no kernel. Since the $\mathrm{pr}$-fibres are irreducible isoparametric submanifolds, the family of shape operators 
	$\tilde A_\psi$, $\psi \in (\tilde \nu _2)_y$, restricted to the $\mathrm{pr}$-vertical distribution $\mathcal H^\perp$, have no common kernel. The previous observations imply our assertion.
\end{rem}	

\vspace{.2cm}

Let, keeping  the notation of Section \ref{sec:NT1}, and Remark \ref{rem:sinNul},  
$\tilde \eta_1, \cdots , \tilde \eta _d$  ($d\geq g+1$)   be the curvature normals of $\tilde F(x)$ with associated eigendistribution $\tilde E_1, \cdots , \tilde E_d$, which are integrable due to the Codazzi identity  (perhaps in a neighbourhood of a point close to $x$). Recall that $\tilde \eta_1, \cdots  \tilde \eta _{g}$ are parallel, and $\eta _{g+1}$ is also parallel due to Remark \ref{rem:971}.   Moreover, all eigendistributions are Riemannian  with the exception of  $\tilde E_{g+1}$ which is Lorentzian. 

Asumme that  $\tilde \eta _i$ is a parallel curvature normal, then any integral manifold $S(y)$ of $\tilde E_{i}$ is totally geodesic in $\tilde F(x)$. Moreover, $S_i(y)$ is an umbilical submanifold of the ambient space $\mathbb C^{n,1}$ which is contained  in the affine space 
$$ 	y +  \tilde E_{i}(y)+ \mathbb R \tilde \eta _i (y).$$

({\it a}) If $\tilde \eta _i (y)$ is spacelike, and so $i\neq g+1$, then  $S_i(y)$ is an open subset of the sphere  of the Euclidean space 
$y +  \tilde E_{i}(y)+ \mathbb R \,\tilde \eta _i (y)$ c
with center $c$ and radius $\rho$ given by 
\begin{equation}\label{eq:839} c= y + \frac {1}{\langle \tilde \eta _i(y), \tilde \eta _i(y)\rangle}\, \tilde \eta _i\, ,
\ \ \ \ \ \ \
\rho = \frac {1}{\sqrt{\langle \tilde \eta _i(y), \tilde \eta _i(y)\rangle}}.
\end{equation}
In this case $S_i(y)$ is called a {\it curvature sphere}

({\it b}) If $\tilde \eta _i$ is lightlike, and so  $i > g+1$, then $S_i(y)$ is a horosphere of an appropriate real hyperbolic space. In fact, there always exist a timelike $z\in \nu_y\tilde F(y)$ such that 
$\langle \tilde \eta _i , z\rangle =1$. Let 
$-r^2 = \langle z , z\rangle$ and let 
$$H_r^k = \{w\in y + \tilde E_i(y) \oplus \mathbb R \, \tilde \eta _i(y) \oplus \mathbb R \, z:  
\langle w-(y+z), w-(y+z)\rangle = -r^2\}^o$$
where $k=\dim \tilde E_i(y) + 1$ and $(\ )^o$ denotes the connected component by $y$. Then $S_i(x)$ is an open subset of the horosphere defined by 
$$\bigg(y + \tilde E_i(y) \oplus \mathbb R \, \tilde \eta _i(y)\bigg)\cap  H_r^k\, .$$

({\it c}) If $\eta _i$ is timelike, $i\neq g+1$, then $S_i(x)$ is an open subset of the hyperbolic space  of $L_i(x)$ defined by 
	$$H_r^{k_i} = \{X\in x +  E_i(x) \oplus \mathbb R \,  \eta _i(x):  
	\langle X-c, X-c)\rangle = -r^2 \}^o$$, where
	 	$-r^2= \langle x-c,x-c\rangle $, and 
 	 $c$ has the same expression as in (a).
     \vspace{.15cm}

({\it d}) If $i=g+1$, there are two cases: 

\vspace{.15cm}

\noindent {\it $\bullet$ } $\dim \tilde E_{g+1}= 1$. Then, by Remark \ref{rem:971}, $\tilde E_{g+1}= \mathbb R \tilde v$. In this case $S_i(y)$ is an open subset ot the (compact)  anti-circle 
of the negative definite affine plane  
$y + \tilde E_{g+1}(y) \oplus \mathbb R \tilde \eta _{g+1}(y)$ of center 
$$ c = y + \frac {1}{\langle \tilde \eta _{g+1}(y), \tilde \eta _{g+1}(y)\rangle}\, \tilde \eta _{g+1}$$ 
and given by the equation 
$$\langle w-c, w-c\rangle = 
\frac {1}{{\langle \tilde \eta _{g+1}(y), \tilde \eta _{g+1}(y)\rangle}}.$$

\noindent {\bf $\bullet$} $\dim \tilde E_{g+1}> 1$. One gets the same formulas as in the previous case.  But, instead of an anti-circle one obtains a pseudo-hyperbolic space.

\vspace{.15cm}

\subsection{The  parallel focal set} 
\

We keep the notation and assumptions of this section.  
In order to simplify the exposition we introduce the following notation:
$$\tilde F := \tilde F(x), \ F:= F(\mathrm{pr}(x)).$$

We next discuss some standard facts, or definitions, that are well-known in a Euclidean ambient space and extend to our setting with straightforward modifications.

The {\it affine normal space}  of $\tilde F$ at $y$ is the affine subspace 
$$y + \nu _y \tilde F\subset \mathbb C^{n,1}\simeq \mathbb R^{2n,2}.$$

The {\it affine focal hyperplane} $\tilde \Sigma _j(y)\subset y + \nu _y \tilde F$  associated to 
$\tilde \eta _j(y)$ is 
$$\tilde \Sigma _j (y): = y + \tilde H_j(y), $$
where $\tilde H_j(y)$ is the linear hyperplane of $\nu _y \tilde F$ 
defined by the equation 
$\langle \tilde\eta _j(y), \cdot \,  \rangle = 1$ ($j= 1, \cdots , d$). The {\it focal set} at $y$  is defined by 
$$\cup _{j=1}^d \, \tilde \Sigma _j(y)$$
and the {\it parallel focal set} at $y$ is defined by 
$$\cup _{i\in I}\,  \tilde \Sigma _i (y)$$
where 
\begin{equation}\label{eq:I12}
I=\{i: \tilde \eta _i \text{ is parallel, } 1\leq i\leq d\}.
\end{equation}

Let $\xi$ be a parallel normal field of $\tilde F$ such that the parallel manifold $\tilde F_\xi$ is not singular, i.e., 
$I-\tilde A_\xi$ is never singular (perhaps after shrinking \( \xi \) and \( F \)). Equivalently, 
$\langle \xi_y, \tilde \eta _j(y)\rangle \neq 1$, for all $y\in \tilde F$, $j=1, \cdots , d$. Let $f$ be  the parallel map $y \overset{f}{\to} y + \xi (y)$. Then affine normal spaces  $y + \nu _y\tilde F$ and 
$f(y) + \nu _{f(y)}\tilde F_\xi$ do coincide.
Moreover, the focal set of $\tilde F$ at $y$ coincides with the focal set of 
$\tilde F_\xi$ at $f(y)$. In fact, this is a consequence of the tube formula that relates the shape operators of parallel manifolds (see  Lemma 3.4.7 and Proposition 4.4.11 of \cite{BCO}). One has that $f$ maps the  eigendistribution $\tilde E_j$ of $\tilde F$ into an eigendistribution $\tilde E_j^\xi := f_*(\tilde E_j)$
of $\tilde F_\xi$. Moreover, form the tube formula, the curvature normal $\tilde \eta _j^\xi$ associated to 
$\tilde E_j^\xi$  is given by 
$$\tilde \eta _j^\xi(f(p))=\frac {1}{1-\langle \tilde \eta _j(y), \xi(y)\rangle}
\tilde \eta _j(y).$$
Observe that $\tilde E_j$ is Riemannian if and only if $\tilde E_j^\xi$ is Riemannian. Moreover, 
$\tilde \eta _j$ is parallel if and only if $\tilde \eta _j^\xi$ is parallel. Then the parallel focal set of $\tilde F$ at $y$ coincides with that of $\tilde F _\xi$ at $f(y)$. Observe that $f$ maps a curvature sphere $S_i(y)$ into a curvature sphere $S_i^\xi(f(y))$; see (\ref{eq:839}).

\subsection{The Coxeter group}\

We keep the notation and assumptions of this section. 

    We keep the notation and assumptions of Sections 3 and 4. In particular, we assume that the index of relative nullity of $\bar N$ is zero. This implies that the normal holonomy of $N$ acts irreducibly. Let us further assume that the normal holonomy is not transitive on the unit sphere of the normal space. This implies, in particular, that the dimension of the normal space of $\tilde F$ is at least $3$.

The next main tools are inspired by Terng's construction of the Coxeter group of an isoparametric submanifold \cite[Section 6.3]{PT} (see also \cite[Section 4.2]{BCO}). We may assume, since we work locally, that $\tilde F$ is simply connected and so  $\nu \tilde F$ is globally flat. Let $y,y'\in \tilde F$ and let $\tau _{y,y'}: \nu _y\tilde F 
\to \nu _{y'}\tilde F$ be the parallel transport with respect to  the normal connection. Let $\tilde \tau _{y,y'}:
y + \nu _y\tilde F \to y' + \nu _{y'}\tilde F$ 
 be the so-called {\it affine parallel  transport}. Namely, $\tilde\tau _{y,y'}(y)=y'$ and $\mathrm{d}_y\tilde \tau _{y,y'}= 
 \tau _{y,y'}$. The affine parallel transport  maps  parallel focal hyperplanes into parallel focal hyperplanes. That is, for any $i\in I=\{i: \tilde \eta _i \text{ is parallel, } 1\leq i\leq d\},$
 $$\tilde \tau _{y,y'}\big(\tilde \Sigma _i (y)\big) = \tilde \Sigma _i (y')$$
 and hence the affine parallel transport maps parallel focal sets into parallel focal sets: 
 \begin{equation}\label{eq:A78}
 	\tilde \tau _{y,y'}\big(\cup _{i\in I} \tilde \Sigma _i (y)\big) = \cup _{i\in I} \tilde \Sigma _i(y')
 	\end{equation}

 Let 
 \begin{align}\label{eq:I014}
 	I_0:&=\{i\in I: \text{ the integral manifolds of } \tilde E_i \text{ are curvature spheres}\} \\ 
 	\nonumber 
 	&= \{i\in I:  \tilde E_i \text{ is Riemannian }
 	\text{ and } \tilde \eta _i \text{ is spacelike}\}
  	\end{align}
 
 		 Equivalently,  $\tilde E_i$ is Riemannian, and  $\tilde \eta _i$ is parallel and spacelike. Observe that $\{1, \cdots , g\}\subset I_0$.  Then
 \begin{equation}\label{eq:B78}\tilde \tau _{y,y'}\big(\tilde \Sigma _{i_0} (y)\big) = \tilde \Sigma _{i_0} (y'), \text{ for any  } i_0\in I_0.
 	\end{equation}
 Let  $i_0\in I_0$ and assume that the curvature spheres 
 $S_{i_0}(y)$, $y\in \tilde F$, are complete. Let, for $y\in \tilde F$, $\tilde y\in S_{i_0}(y)$ be the antipodal point of $y$. Then $\xi _{i_0}$, defined by $\xi _{i_0}(y)=\tilde y -y$ is a parallel normal field and $\tilde F_{\xi _{i_0}}= \tilde F$. In fact,  $\xi _{i_0}= \frac{2}{\langle \tilde \eta _{i_0},\tilde \eta _{i_0}\, \rangle}\tilde  \eta _{i_0}$.  
 
   The affine parallel transport $\tilde \tau _{y,\tilde y}$ may be achieved by parallel transporting along a curve in $S_{i_0}$ from $y$ to $\tilde y$. It turns out that this parallel transport coincides with the reflection $R_{i_0}$ in the focal affine hyperplane 
   $\tilde \Sigma _{i_0}(y)$ (see \cite[Section 4.2.2]{BCO}). The affine normal spaces  
   of $\tilde F=\tilde F_{\xi _{i_0}}$ at $y$ and $\tilde y$ coincide. Moreover, 
   \begin{equation}\label{eq:C78}\cup _{j\in J} \, \tilde \Sigma _j(y) = 
   \cup _{j\in J} \, \tilde \Sigma _j(\tilde y),
   \end{equation}
   where $J$ is any  of the following sets: $\{1, \cdots , d\}$, $I$, $I_0$. In fact, for $J=\{1, \cdots , d\}$ the equality (\ref{eq:C78}) is true since $\tilde F$ is a parallel manifold to itself. For $J=I, \, I_0$ it is a consequence of equalities 
   (\ref{eq:A78}) and (\ref{eq:B78}). 
   
   By the previous discussions we obtain that 
   \begin{equation}\label{eq:D78}
   	R_{i_0}\big(\cup _{j\in J} \, \tilde \Sigma _j(y)\big)= \cup _{j\in J} \, \tilde \Sigma _j(y)
   	\end{equation}
   
   In particular, 
   \begin{equation}\label{eq:E78}
   	R_{i_0}\big(\cup _{i\in I_0} \, \tilde \Sigma _i(y)\big)= \cup _{i\in I_0} \, \tilde \Sigma _i(y)
   \end{equation}

If the curvature sphere $S_{i_0}(y)$ is not complete, we can use an argument, used by Terng  in the proof of Theorem 3.4 in \cite{T}, in order to extend locally $\tilde F$ and  so that the curvature sphere $S_{i_0}(y)$ is complete. We now sketch this argument.   Let us consider the parallel focal manifold $F_\xi$, where $\xi = 
\frac{1}{\langle \tilde \eta _{i_0}, \tilde \eta _{i_0}\rangle} \tilde \eta _{i_0}$. Perhaps, by  passing before to a nearby generic parallel manifold to $\tilde F$ so that $\langle \tilde\eta_{i_0}, \tilde\eta_{j}\rangle =1 $ if and only if $j= i_0$ ($j= 1\cdots d$). Then consider the normal (parallel) subbundle $B$ over 
$\tilde F_\xi$ given by $B_{z} =\tilde E_{i_0}(y)\oplus \mathbb R\tilde \eta _{i_0}(y)$, where $\pi (y):= y + \xi (y) =z$. This subspace does not depend on $y$ such $\pi (y)=z$. In contrast to the framework in Terng's proof, not all curvature normals are necessarily parallel. Consequently, in our context, we have to consider  the complete sphere bundle $SB$ of $B$ of radius $\beta \langle \xi ,
\xi \rangle^{1/2}$, $\beta >0$ small. The image of $B$ under the normal exponential map of $\tilde F_\xi$ is the desired extension of $\tilde B$.  

By equation (\ref{eq:fibers}), $S(y)= \mathrm{pr}^{-1}(\{\mathrm{pr}(y)\})\subset \mathrm{pr} (y) + \nu _{\mathrm{pr} (y)} N$ is an irreducible isoparametric submanifold. Then 
$$\{R_{i\vert y + \nu_y(N)_{\zeta _q}}: 1\leq i \leq g\}$$ 
generates a (finite) Coxeter group $W$ which acts irreducibly on the affine normal space  
	$y + \nu_y(N)_{\zeta _q}$. Moreover, $\mathrm{pr}(y)$ is the only fixed point in such a space. Taking into account that 
	 $y + \nu_y(N)_{\zeta _q}$ is an affine hyperplane of   $y + \nu_y\tilde F$, 
 we obtain that 
 $$\cap _{i\in I_0}\tilde\Sigma (y) = \mathrm{pr}(y) + \mathbb R \vec{u}_y$$
 and hence the line 
 \begin{equation}\label{eq:F78} 
 	\mathrm{pr}(y) + \mathbb R \vec{u}_y
 	\end{equation}
 	  is the fixed set of $W$ acting on $y + \nu _y\tilde F(y)$. 
 
 Let $\tilde W$ be the group of affine transformations of $y + \nu_y\tilde F$ generated by the set 
 $\{R_i:i\in I_0\}$.   As for the case of Euclidean reflections, $\tilde W$ is a finite group. In fact, observe first that $R_i= R_i^{-1}$. If we now write $g\in \tilde R$ as a word of minimal length $g=R_{i_1} \cdots R_{i_m}$ then all factors are different due to the commutation law $R_iR_jR_i = R_k$ for some $k$.  Since $\tilde W$ is finite, it has a fixed point $p$. Such a point must belong  to the line $\mathrm{pr}(y) + \mathbb R \vec{u}_y$, since $W\subset \tilde W$ (see (\ref{eq:F78})). We will identify $\tilde W$ with a linear group with center $p$.
 Assume that   $\tilde W$ acts irreducibly on 
 $y + \nu _y\tilde F$. Then, since $\tilde W$ is finite there exits a positive definite scalar product $(\ , \ )$ that is invariant by $\tilde W$.  By the Schur lemma, the Lorentz inner product of $y + \nu _y\tilde F$ is a multiple of $(\ , \ )$. A contradiction since $\dim \nu _y\tilde F\geq 2$.
 Thus, the action of $\tilde W$ is reducible. From the fact that  $W\subset \tilde W$ acts irreducibly on the hyperplane 
 $y + \nu_y(N)_{\zeta _q}$ of  $y + \nu _y\tilde F$ 
 it follows that the unique non-trivial irreducible subspaces of $\tilde W$ are $y + \nu_y(N)_{\zeta _q}$ and $\mathrm{pr}(y) + \mathbb R \vec{u}_y$. Moreover, since the finite group $\tilde W_{\vert y + \nu_y(N)_{\zeta _q}}$ must have a fixed point, and the only fixed point of $W$ is such a space is 
 $\mathrm{pr}(y)$, we conclude that $\mathrm{pr}(y)$ is a fixed point of $\tilde W$. Then the parallel normal vector field $y\overset {\tilde \zeta} {\mapsto} y -\mathrm{pr}(y)$ satisfies that 
 $$\ker(Id - \tilde A_{\tilde \zeta (y)}) \supset 
 \oplus _{i\in I_0}\tilde E_i(y).$$
On the other hand, since $\tilde F_{\tilde \zeta}  =F $,  it follows that 
 $$\ker(Id - \tilde A_{\tilde \zeta (y)}) = 
 \oplus _{i=1}^ g\tilde E_i(y)$$
 (see (\ref{eq:N=N}). Then $\tilde W =W$ and 
 \begin{equation} \label{eq:I_0=}
 	I_0= \{1, \cdots , g\}.
 	\end{equation}
 
 \begin{cor}\label{cor:vones} The only curvature spheres of $\tilde F$ are the vertical ones associated to the isoparametric fibers of $\tilde F \overset{\mathrm{pr}}{\to} F$. \qed
 \end{cor}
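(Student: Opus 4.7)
The plan is to read off the corollary as a direct geometric translation of the equality $I_0 = \{1,\ldots,g\}$ just established in~(\ref{eq:I_0=}). First I would recall the defining property~(\ref{eq:I014}): the set $I_0$ consists exactly of those indices $i$ for which the integral manifold $S_i(y)$ of $\tilde E_i$ is a curvature sphere of $\tilde F$, equivalently those $i$ for which $\tilde\eta_i$ is parallel and spacelike and $\tilde E_i$ is Riemannian. In the case analysis (a)–(d) that opens Section~5, curvature spheres appear only in case (a), which is precisely the characterization just given; in cases (b), (c), (d) the integral manifolds $S_i(y)$ are horospheres, hyperbolic spaces, or (pseudo-)anti-circles, not round spheres. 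Hence by definition the curvature spheres of $\tilde F$ are exactly the manifolds $S_i(y)$ indexed by $I_0$.

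Next I would invoke the identity $I_0 = \{1,\ldots,g\}$, which is the conclusion of the Coxeter-group argument culminating in (\ref{eq:I_0=}). This identifies $I_0$ with the set of indices labelling the parallel curvature normals $\tilde\eta_1,\ldots,\tilde\eta_g$ of the isoparametric fibre $S(y) = \mathrm{pr}^{-1}(\{\mathrm{pr}(y)\})$; by the discussion preceding Main Lemma~\ref{lem:main343}, the associated eigendistributions $\tilde E_1,\ldots,\tilde E_g$ are by construction the vertical autoparallel eigendistributions, namely the extensions to $(N)_{\zeta_q}$ of the eigendistributions of the irreducible isoparametric fibre $S(y)\subset \mathrm{pr}(y)+\nu_{\mathrm{pr}(y)}N$.

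Combining the two observations, every curvature sphere of $\tilde F$ is the integral manifold of some $\tilde E_i$ with $i\in\{1,\ldots,g\}$, and conversely each such $\tilde E_i$ integrates to an isoparametric curvature sphere in the fibre $S(y)$. These are exactly the vertical curvature spheres associated with the submersion $\mathrm{pr}\colon \tilde F\to F$, which is the claim. No real obstacle is expected: all the substantive content is in the Coxeter-group argument that produced~(\ref{eq:I_0=}), and the corollary amounts to reading that algebraic identity in geometric language.
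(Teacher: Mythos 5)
Your proposal is correct and matches the paper's reasoning exactly: the corollary is stated with an immediate \qed precisely because it is nothing more than the geometric rereading of the identity $I_0=\{1,\dots,g\}$ from (\ref{eq:I_0=}) together with the definition (\ref{eq:I014}) of $I_0$ as the set of indices whose eigendistributions integrate to curvature spheres, with $\{1,\dots,g\}$ labelling the vertical eigendistributions of the fibres of $\mathrm{pr}$. No further comment is needed.
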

	
 The above corollary together with the following  lemma will be crucial for our purposes.
 
 \begin{lemma}\label{lem:novones} Not all curvature normals of $\tilde F$ are parallel.
 \end{lemma}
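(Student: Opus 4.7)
I argue by contradiction: assume every curvature normal $\tilde\eta_1,\ldots,\tilde\eta_d$ of $\tilde F$ is parallel in the normal connection. By Codazzi each $\tilde E_i$ is then autoparallel, so the index set $I$ of (\ref{eq:I12}) equals $\{1,\ldots,d\}$. Combined with Corollary \ref{cor:vones} ($I_0=\{1,\ldots,g\}$) and the description (\ref{eq:I014}) of $I_0$, this forces: for every $i>g$, either $\tilde E_i$ is non-Riemannian or $\tilde\eta_i$ is non-spacelike. Since the only non-Riemannian eigendistribution is the Lorentzian $\tilde E_{g+1}$ (Remark \ref{rem:971}), for each $i>g+1$ the curvature normal $\tilde\eta_i$ must be lightlike or timelike and $\tilde E_i$ must be Riemannian.

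To rule out non-spacelike $\tilde\eta_i$ with $i>g+1$, I would reduce to Proposition \ref{prop:sl-nsl}. The autoparallel Lorentzian distribution $\tilde E_{g+1}$ has autoparallel Riemannian orthogonal complement $D:=\bigoplus_{i\neq g+1}\tilde E_i$ in $T\tilde F$; let $\Lambda$ be its integral leaf through a generic $y\in\tilde F$. Each shape operator of $\tilde F$ preserves every $\tilde E_i$, so $\Lambda$ inherits an isoparametric structure in $\mathbb C^{n,1}$ whose commuting shape operators simultaneously diagonalize over $\{\tilde E_i|_\Lambda : i\neq g+1\}$, with curvature normals that inherit the parallelism. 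After slicing $\mathbb C^{n,1}$ by a totally geodesic hyperplane transverse to one timelike direction in $\tilde E_{g+1}$, $\Lambda$ realizes as a Riemannian isoparametric submanifold of a Lorentz space, and Proposition \ref{prop:sl-nsl} forces every non-spacelike curvature normal to be perpendicular to every other one. But $\tilde\eta_{g+1}$ lies along the timelike $\vec u\in\nu_1$, so any non-spacelike $\tilde\eta_i$ with $i>g+1$ would have to be perpendicular to $\vec u$, hence lie in the Riemannian $\nu_2$; such a vector must vanish, a contradiction. Hence $d=g+1$.

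In the remaining case $d=g+1$, the eigendistribution $\tilde E_{g+1}$ coincides with the $\mathrm{pr}$-horizontal lift of $TF$ inside $T\tilde F$. For any parallel $\xi\in\nu_2=(\nu(N)_{\zeta_q})|_{\tilde F}$ the shape operator satisfies $\tilde A_\xi|_{\tilde E_{g+1}}=\langle\xi,\tilde\eta_{g+1}\rangle\,\mathrm{Id}=0$, because $\tilde\eta_{g+1}\in\nu_1\perp\nu_2$. Symmetry of the shape operator of $(N)_{\zeta_q}$ then forces its value on $\vec u$ to lie in $\mathbb R\vec u$, and via the tube formula applied to $N$ as the parallel focal manifold (\ref{eq:N=N}) this translates to the shape operator $A_\xi^N$ of $N$ having $TF=(\vec v)^\perp$ in its kernel and $\vec v$ as an eigenvector. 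The Kähler relation $A_\xi^N J=JA_\xi^N$ and $J\vec v\in TF$ then force $A_\xi^N\equiv 0$. Thus the second fundamental form of $N$ takes values only in the orbit sub-bundle, and an Erbacher-type complex reduction contradicts the fullness of $N$ (Proposition \ref{prop:fullif}), which may be assumed by restricting the ambient space.

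The main obstacle is the slice reduction in Step 2: one must verify that $\Lambda$ inherits a bona fide Riemannian isoparametric structure, with parallel curvature normals, in the Lorentz ambient produced by slicing, and that the non-spacelike curvature normals of $\tilde F$ descend to non-spacelike curvature normals of $\Lambda$. A secondary obstacle is the Kähler-geometric descent in Step 3, which needs a careful pseudo-Riemannian version of the tube formula together with the complex Erbacher reduction, and the reduction to the case that $N$ (equivalently $\bar N$) is full.
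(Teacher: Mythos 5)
Your Step 1 is correct and your overall strategy --- reduce to Proposition \ref{prop:sl-nsl} by exhibiting a Riemannian isoparametric submanifold of a Lorentzian affine subspace whose curvature normals include the offending non-spacelike $\tilde\eta_i$ --- is exactly the paper's. But Step 2, which carries the whole weight, has two genuine gaps. First, the distribution $D=\bigoplus_{i\neq g+1}\tilde E_i$ is not autoparallel in general: a sum of eigendistributions of $\tilde F$ is autoparallel essentially only when it is realized as $\ker \tilde A_{\tilde\psi}$ or $\ker(Id-\tilde A_{\tilde\psi})$ for a \emph{parallel} normal field $\tilde\psi$ (Codazzi), and no parallel $\tilde\psi$ cuts out exactly the complement of $\tilde E_{g+1}$ (such a $\tilde\psi$ would have to lie in $\mathbb R\vec u$ and be orthogonal to every $\tilde\eta_i$ with $i>g+1$, which fails precisely in the situation you are trying to exclude). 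Without autoparallelity your leaf $\Lambda$ need not sit inside $y+D(y)\oplus\nu_y\tilde F$ and the "slice" has no isoparametric structure. The paper's fix is to choose, for the given non-spacelike $\tilde\eta_i=\lambda\vec u+\tilde\mu$ and each vertical $\tilde\eta_j$ ($j\le g$), a parallel section $\tilde\psi$ of $\tilde\nu_2$ with $\langle\tilde\psi,\tilde\eta_j\rangle=\langle\tilde\psi,\tilde\mu\rangle=1$; then $\ker(Id-\tilde A_{\tilde\psi})$ contains $\tilde E_i\oplus\tilde E_j$, excludes the Lorentzian $\tilde E_{g+1}$, and \emph{is} autoparallel. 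Second, your perpendicularity conclusion is drawn against the wrong normal: once $\tilde E_{g+1}$ is discarded, $\tilde\eta_{g+1}$ is not a curvature normal of $\Lambda$, so Proposition \ref{prop:sl-nsl} cannot give $\tilde\eta_i\perp\vec u$. What it gives is $\tilde\eta_i\perp\tilde\eta_j$ for $j=1,\dots,g$, i.e. $\tilde\eta_i\perp\tilde\nu_2$, hence $\tilde\eta_i\in\mathbb R\vec u$ --- which is timelike and perfectly consistent with $\tilde\eta_i$ being non-spacelike. So no contradiction arises and "$d=g+1$" does not follow; all you can conclude is that every $\tilde\eta_i$ with $i>g+1$ is a multiple of $\vec u$.

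The missing ingredient is the paper's opening move: if every $\tilde\eta_i$ with $i>g$ were a multiple of $\vec u=r^2\tilde\eta_{g+1}$, then (since $\tilde\zeta\in\tilde\nu_2$ is orthogonal to $\vec u$) $\ker\tilde A_{\tilde\zeta}$ and $\ker(Id-\tilde A_{\tilde\zeta})$ would be complementary, non-degenerate, autoparallel distributions invariant under all shape operators, so Moore's lemma splits $\tilde F$, hence $F$, hence $N$, contradicting the irreducibility of the normal holonomy. This guarantees the existence of some $i>g+1$ with $\tilde\eta_i\notin\mathbb R\vec u$, and it is for \emph{that} $i$ that the (corrected) perpendicularity argument yields $\tilde\eta_i\in\mathbb R\vec u$ and the contradiction. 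Your Step 3 gestures at this splitting phenomenon, but only in the case $d=g+1$, and it leans on an unproved pseudo-Riemannian "Erbacher-type complex reduction"; Moore's lemma is the clean tool here, and it also covers the case $d>g+1$ with all extra curvature normals collinear with $\vec u$, which your case analysis omits.
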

\begin{proof} Assume that all curvature normals are parallel. Observe, keeping the notation of previous  sections, that there should exist  $i>g+1$ such that $\tilde \eta _i$ is not a scalar multiple of $\tilde \eta _{g+1}$. If not, $\ker \tilde A_{\tilde \zeta}$ and $\ker (Id - \tilde A_{\tilde \zeta})$ would be two (orthogonally) complementary non-degenerate totally geodesic distributions which are   invariant by  all shape operators. The affine subspace generated by any  integral manifold of $\ker (Id - \tilde A_{\tilde \zeta} )$ is Euclidean and hence non degenerate.  Then, by Moore's lemma (see e.g.  \cite[Lemma 2]{Wi} and \cite[Corollary 1.7.4]{BCO}),  $\tilde F$ locally splits and hence $F$ locally splits. The flat part  $\nu _0F$ of  normal bundle $\nu F$  has dimension $1$, and locally $N = \cup _{\xi}F_\xi$, where $\xi$ is a small parallel section of  $\nu _0F$. Then $N$ locally splits which is a  contradiction, since the normal holonomy of $N$ acts irreducibly (see the begining of this section).
	
 Let  $\tilde \eta _i$, $i>g$ be
  such that it is not a scalar multiple of $\vec u$. Then $i \neq g+1$
	 and thus, $\tilde E_i$ is Riemannian (see the paragraph below Remark \ref{rem:sinNul}). If $\tilde \eta _i$ is spacelike then the integral manifolds of $\tilde E_i$ are curvature spheres (see (\ref{eq:I014})). This contradicts Corollary \ref{cor:vones}. Then $\tilde \eta _i = \lambda \vec u + \tilde \mu $, where $\lambda \neq 0$ and $\tilde \mu \neq 0$ is the projection of $\tilde \eta _i$ to the parallel normal subbundle $\tilde \nu _2 = \vec u ^\perp = \nu (N)_{\zeta _q\, \vert \tilde F}$ (see (\ref{eq:810})). Let $j\in \{1, \cdots , g\}$ and let $\tilde \psi$ be a parallel section of $\tilde \nu _2$ such that $\langle \tilde \psi , \tilde \eta _j\rangle = 1 =\langle \tilde \psi , \tilde \mu \rangle$. In fact define $\tilde \psi$ as an appropriate scalar multiple of a non-zero parallel section of the bundle 
	 $\mathbb R \tilde \eta _j \oplus \mathbb R\tilde \mu$ which is perpendicular to 
	 $\tilde \eta _j - \tilde \mu$. Then 
	 $\tilde E := \ker (Id-\tilde A_{\tilde \psi})$, which is a direct sum of eigendistributions, contains 
	 $\tilde E_i\oplus \tilde E_j$ and does not contain $\tilde E_{g+1}$. Then any integral manifold $\tilde S(y)$ of the autoparallel distribution $\tilde E$ 
        is a Riemannian isoparametric submanifold of the affine Lorentzian subspace 
       $y +  \tilde E(y) \oplus \nu _y\tilde F$.
       Since $\tilde \eta _j$ is  spacelike and $\eta _i$ is not so, then  $\langle \tilde \eta _i ,\tilde \eta _j\rangle =0$  by 
       Proposition \ref{prop:sl-nsl}. Since $\{\eta _j: 1\leq j \leq g\}$  generates $\tilde \nu _2$, then $\eta _i$ lies in $\tilde \nu _1 = 
       \mathbb R\, \vec u$. A contradiction that implies the lemma. 

	\end{proof} 

 \vspace{.15cm}

 \begin{proof}[Proof of Theorem \ref{thm:MainTheorem1}] 
 
 By Corollary \ref{cor:NH-compare}, the normal holonomy group of $N$ acts irreducibly on the normal space. Let $N$ be the lift of $\bar N$ to $\mathbb C^{n,1}$, and let $\bar\xi$ be a parallel normal field to $N$ such that $\ker A_{\tilde \xi}$ has constant dimension. Then, according to Lemma \ref{lem:main343} (3), 
$H^{\tilde \xi}(x) = \tilde F(x)$, and so it does not depend on $\tilde \xi$. By Lemma \ref{lem:743}, $T_y\tilde F (x)$ is invariant under all shape operators of $(N)_{\zeta _q}$. Moreover, $\ker A_{\tilde\xi\vert TF(x)}$ is invariant under all the shape operators of $(N)_{\zeta _q}$. Let $S^{\tilde \xi}(y)$ be the  total geodesic and non-degenerate integral manifold of $\ker A_{\tilde\xi\vert TF(x)}$ by $y\in \tilde F (x)$. Then $S^{\tilde \xi}(y)$ is (locally) the orbit of a weakly polar action. Namely, via the horosphere embedding, it coincides with the normal holonomy orbit of an appropriate focal manifold. Since  $\tilde F(x)$ has flat normal bundle, and its family of shape operators are simultaneously diagonalizable, with real eigenvalues, the same is true for $S^{\tilde \xi}(y)$. Moreover, since $S^{\tilde \xi}(y)$ is the orbit of a weakly polar action, it follows that the curvature normals of $S^{\tilde \xi}(y)$ are parallel in the normal connection. As for the curvature normals associated to the fibers of $(N_{\tilde \zeta _q})$, any curvature normal of $S^{\tilde \xi}(y)$ extends to a parallel curvature normal of $\tilde F(x)$ (see \cite[sec.7.1]{BCO}). If the normal holonomy group of $N$ is not transitive we can find, as in \cite[sect. 7.4]{BCO}, parallel normal fields $\tilde \xi$, $\tilde \xi '$ of $\tilde F (x)$ such that $\ker A _{\xi} + \ker A _{\xi '}$ contains the horizontal distribution of $\tilde F (x)$. Then, Proposition 7.36 of \cite{BCO} applies with the same proof to show that any curvature normal of $\tilde F(x)$ is parallel in the normal connection. This contradicts Lemma \ref{lem:novones}, proving  that the normal holonomy must be transitive.
 \end{proof}

\end{document}